\documentclass[final, reqno, 11pt]{amsart}
\usepackage{amsmath}
\usepackage{amsfonts}
\usepackage{amssymb}
\usepackage{mathabx}
\usepackage{amsthm}

\newtheorem{theorem}{Theorem}[section]
\newtheorem{lemma}[theorem]{Lemma}
\newtheorem{proposition}[theorem]{Proposition}

\theoremstyle{remark}

\newtheorem{definition}{Definition}[section]
\newcommand{\set}{\mathbb}

\newcommand{\les}{\lesssim}

\newcommand{\mc}{\mathcal}
\newcommand{\be}{\begin{equation}}
\newcommand{\ee}{\end{equation}}
\newcommand{\bee}{\begin{align}}
\newcommand{\eee}{\end{align}}
\newcommand{\ba}{\begin{array}}
\newcommand{\ds}{\displaystyle}

\newcommand{\ea}{\end{array}}
\newcommand{\bpm}{\begin{pmatrix}}
\newcommand{\epm}{\end{pmatrix}}
\newcommand{\lb}{\label}

\DeclareMathOperator{\sgn}{sgn}
\DeclareMathOperator{\supp}{supp}

\DeclareMathOperator{\Imim}{Im}

\DeclareMathOperator{\B}{\mathcal B}
\DeclareMathOperator{\codim}{codim}

\newcommand{\ov}{\overline}
\newcommand{\dd}{{\,}{d}}

\renewcommand{\Im}{\Imim}
\newcommand{\R}{\mathbb R}
\newcommand{\C}{\mathbb C}

\newcommand{\V}{\mathcal V}
\newcommand{\W}{\mathcal W}

\title[Threshold Eigenstates and Resonances]{Dispersive estimates in $\R^3$ with Threshold Eigenstates and Resonances}
\author{Marius Beceanu}
\address{Institute for Advanced Study, Einstein Drive, Princeton, NJ, 08540, USA}
\email{mbeceanu@ias.edu}
\thanks{The author was partially supported by a Rutgers Research Council grant.}
\subjclass[2000]{35J10, 47D08}
\begin{document}
\maketitle
\numberwithin{equation}{section}
\begin{abstract} We prove dispersive estimates in $\R^3$ for the Schr\"{o}dinger evolution generated by the Hamiltonian $H = -\Delta+V$, under optimal decay conditions on $V$, in the presence of zero energy eigenstates and resonances.
\end{abstract}

\tableofcontents
\section{Introduction}
\subsection{Classification of exceptional Hamiltonians} Consider a Hamiltonian of the form $H=-\Delta+V$, where $V$ is a real-valued scalar potential on~$\R^3$.

We assume $V \in L^{3/2, 1} \subset L^{3/2}$, which is the predual of weak-$L^3$ and a Lorentz space, $L^{3/2, 1} \subset L^{3/2-\epsilon} \cap L^{3/2+\epsilon}$; for its definition and properties see \cite{bergh}. By \cite{simon} this is sufficient to guarantee the self-adjointness of $H=-\Delta+V$.

Let $R_0(\lambda): = (-\Delta - \lambda)^{-1}$ be the free resolvent corresponding to the free evolution $e^{-it\Delta}$ and let $R_V(\lambda) := (-\Delta+V-\lambda)^{-1}$ be the perturbed resolvent corresponding to the perturbed evolution $e^{itH}$. Explicitly, in three dimensions and for $\Im \lambda \geq 0$, 
\be\lb{eq_3.55}
R_0((\lambda+i0)^2)(x, y) = \frac 1 {4\pi} \frac {e^{i \lambda |x-y|}}{|x-y|}.
\ee

It will be shown below that under reasonable assumptions $H$ has only finitely many negative eigenvalues. Then the Schr\"{o}dinger evolution restricted to the continuous spectrum $[0, \infty)$ 
has the representation formula
$$\begin{aligned}
e^{itH} P_c = \lim_{\epsilon \to 0} \frac 1 {2\pi i} \int_0^{\infty} e^{it\eta} \big(R_V(\eta+i\epsilon) - R_V(\eta-i\epsilon)\big) \dd \eta.
\end{aligned}$$

By the work of Ionescu--Jerison \cite{ionjer} and Goldberg--Schlag \cite{golsch}, it is known that, when $V \in L^{3/2}$, $R_V(\lambda \pm i0)$ is uniformly bounded in $\B(L^{6/5}, L^6)$ on any interval $\lambda \in [\epsilon_0, \infty)$, $\epsilon_0>0$, and has no singularities in $[0, \infty)$ except potentially at $\lambda=0$.

Observe that $R_V = (I+R_0V)^{-1}R_0$, so $R_V$ has a singularity at zero precisely when $I + R_0(0) V$, which is compact, is not invertible.

We denote the null space of $I + R_0(0) V$ by $\mc M$:
$$
\mc M := \{\phi \in L^{\infty} \mid \phi + R_0(0) V \phi = 0\}.
$$
If $\mc M \ne \varnothing$ we say that $H$ is of \emph{exceptional type}, while if $\mc M = \varnothing$ we say that $H$ is of \emph{generic type}.

The sesquilinear form $-\langle u, V v \rangle$ is an inner product on $\mc M$, see Lemma \ref{lemma_11'}. This pairing is well-defined when $V \in L^{3/2, 1}$ because $u, v \in L^{3, \infty} \cap L^\infty$ by Lemma \ref{lemma_11}.


Let $\mc E := \mc M \cap L^2$ and $P_0$ be the orthogonal $L^2$ projection onto $\mc E$. In Lemma \ref{lemma_12} we provide a characterization of $\mc E$ and show that $\codim_{\mc M} \mc E \leq 1$.

The set $\mc E_1 := \mc E \cap L^1$ also plays a special part in the proof. In Lemma \ref{decay_lemma} we give a characterization of $\mc E_1$ and prove that $\codim_{\mc E} \mc E_1 \leq 12$.

A function $\phi \in \mc M \setminus \mc E$ is called a zero energy resonance of $H$. Following \cite{jeka} and \cite{yajima_disp}, we classify exceptional Hamiltonians $H$ as follows:
\paragraph{1.} $H$ is of exceptional type of the first kind if it has a zero energy resonance, but no zero energy eigenfunctions: $\{0\} = \mc E \subsetneq \mc M$.
\paragraph{2.} $H$ is of exceptional type of the second kind if it has zero energy eigenfunctions, but no zero energy resonance: $\{0\} \subsetneq \mc E = \mc M$.
\paragraph{3.} $H$ is of exceptional type of the third kind if it has both resonances and eigenfunctions at zero energy: $\{0\} \subsetneq \mc E \subsetneq \mc M$.


\subsection{Main result}
When $H$ is of exceptional type of the first kind, we let the \emph{canonical resonance} be $\phi \in \mc M$ such that $\langle V, \phi \rangle > 0$ and $-\langle \phi, V \phi \rangle = 1$ (one can make these choices by Lemma \ref{lemma_12} and Lemma \ref{lemma_11'}, respectively).


Using the canonical resonance $\phi(x)$, we define a constant $a$ and a function $\zeta_t(x)$ by 
$$
a = \frac{4\pi i} {|\langle V , \phi \rangle|^2},\ \zeta_t(x) = e^{\frac{i|x|^2}{4t}} \phi(x).
$$
We also define a function $\mu_t(x)$ by
$$\ds\mu_t(x) := \frac i {|x|} \int_0^1 (e^{\frac {i|x|^2}{4t}} - e^{\frac {i|\theta x|^2}{4t}}) \dd \theta.
$$
Let the operators $R(t)$ and $S(t)$ be given by
\be\lb{rs}\begin{aligned}
&R(t) := \frac{ae^{-i\frac{3\pi}4}}{\sqrt{\pi t}} \zeta_t(x) \otimes \zeta_t(y),\\
&S(t) := \frac{e^{-i\frac {3\pi} 4}}{\sqrt{\pi t}}\Big(-i P_0 V \frac{|x-y|^2}{24\pi} V P_0 + \mu_t(x) \frac{|x-y|}{8\pi} V P_0 + P_0 V \frac{|x-y|}{8\pi} \mu_t(y)\Big).
\end{aligned}\ee
Note that
$$
\|R(t) u\|_{L^{3, \infty}} + \|S(t) u\|_{L^{3, \infty}} \les t^{-1/2} \|u\|_{L^{3/2, 1}}.
$$

\begin{proposition}[Main result]\lb{main_result} Assume that $\langle x \rangle^2 V \in L^{3/2, 1}$ and that $H=-\Delta+V$ is exceptional of the first kind. Then for $1 \leq p < 3/2$ and any $u \in L^2 \cap L^p$
$$\begin{aligned}
&e^{-itH} P_c u = Z(t) u + R(t)u,\ \|Z(t)u\|_{L^{p'}} \les t^{-\frac 3 2(\frac 1 p - \frac 1 {p'})} \|f\|_{L^p},
\end{aligned}$$
where $p'$ is the dual exponent $\frac 1 p + \frac 1 {p'} = 1$. Furthermore, assuming only that $V \in L^{3/2, 1}$, for $3/2<p\leq 2$
$$
\|e^{-itH} P_c u\|_{L^{p'}} \les t^{-\frac 3 2(\frac 1 p - \frac 1 {p'})} \|u\|_{L^p},\ \|e^{-itH} P_c u\|_{L^{3, \infty}} \les t^{-1/2} \|u\|_{L^{3/2, 1}}.
$$
Assume that $\langle x \rangle^4 V \in L^{3/2, 1}$ and that $H=-\Delta+V$ is exceptional of the second or third kind. Then for $1 \leq p < 3/2$ and any $u \in L^2 \cap L^p$
\be\lb{dispersie}
e^{-itH} P_c u = Z(t) u + R(t) u + S(t) u,\ \|Z(t) u\|_{L^{p'}} \les t^{-\frac 3 2 (\frac 1 p -\frac 1 {p'})} \|u\|_{L^p},
\ee
where $R(t)$ is missing if $H$ is an exceptional Hamiltonian of the second kind.

In the case when all the zero energy eigenfunctions of $H$ are in $L^1$, one can omit $S(t)$ from (\ref{dispersie}).

Assume that $\langle x \rangle^2 V \in L^{3/2, 1}$ and that $H=-\Delta+V$ is exceptional of the second or third kind. Then for $3/2<p\leq 2$
$$
\|e^{-itH}P_c u\|_{L^{p'}} \les t^{-\frac 3 2(\frac 1 p - \frac 1 {p'})} \|u\|_{L^p},\ \|e^{-itH}P_c u\|_{L^{3, \infty}} \les t^{-1/2} \|u\|_{L^{3/2, 1}}.
$$
\end{proposition}

Note that, in terms of powers of $x$, the decay conditions on the potential correspond to $|V| \les \langle x \rangle^{-2-}$, $|V| \les \langle x \rangle^{-4-}$, and $|V| \les \langle x \rangle^{-6-}$.

Also note that these decay estimates also imply a certain range of Strichartz estimates.

The rest of the paper is dedicated to proving this main result, which is a combination of Propositions \ref{prop28}, \ref{prop29}, \ref{prop_3}, and \ref{prop219}. For brevity, we omit the proof in case $H$ is an exceptional Hamiltonian of the second kind, which is similar to the case when $H$ is exceptional of the third kind.

\subsection{History of the problem} We study solutions to the linear Schr\"{o}dinger equation in $\R^3$ with potential
$$
i \partial_t u + \Delta u - V u = 0,\ u(0) \text{ given}.
$$

By the RAGE theorem, 
every solution is the sum of a bound and a scattering component. The quantitative study of scattering states began with Rauch \cite{rau}, who proved that if $H = -\Delta+gV$, $g \in \C$, with exponentially decaying $V$, then $e^{itH} P_c$ has a local decay rate of $t^{-3/2}$, with at most a discrete set of exceptional $g$ for which the decay rate is $t^{-1/2}$. Here $P_c$ is the projection on the space of scattering solutions.

Threshold estimates in the presence of eigenvalues and resonances go back to the work of Jensen--Kato \cite{jeka}, who obtained an asymptotic expansion of the resolvent $R(\zeta) = (H-\zeta)^{-1}$ into
$$
R(\zeta)= -\zeta^{-1} B_{-2} - i\zeta^{-1/2}B_{-1} + B_0 + i\zeta^{1/2}B_1 + \ldots
$$
and similar ones for the spectral density and the $S$-matrix. The condition imposed on the potential was polynomial decay at infinity of the form $(1 + |x|^\beta)V(x)\in L^{3/2}(\R^3)$, $\beta>2$.

The possible singularities in this expansion are due to the presence of resonances or eigenstates at zero. $B_{-2}$ is the $L^2$ orthogonal projection on the zero eigenspace, while $B_{-1}$ is given by
$$
B_{-1} = P_0 V \frac{|x-y|^2}{24 \pi} V P_0 - \phi \otimes \phi,
$$
where $\phi$ is the canonical zero resonance, see 
above.

Jensen--Kato also obtained an asymptotic expansion for the evolution $e^{itH} P_c$, in two cases: if zero is a regular point, then
$$
e^{itH} P_c = -(4\pi i)^{-1/2} t^{-3/2} B_0 + o(t^{-3/2})
$$
and if there is only a resonance $\phi$ at zero then
$$
e^{itH} P_c = (\pi i)^{-1/2} t^{-1/2} \phi \otimes \phi + o(t^{-1/2}).
$$

Murata \cite{mur} extended these results by obtaining an asymptotic expansion to any order, for a more general evolution, with or without singular points, and then proving that each term in the expansion is degenerate. Murata's expansion and proof are valid in weighted $L^2$ spaces.

Erdogan--Schlag \cite{ersc1} obtained an asymptotic expansion of the evolution $e^{itH} P_c$ in the pointwise $L^1$-to-$L^\infty$ setting using the Jensen--Nenciu \cite{jene} lemma. The condition assumed for the potential was that $|V(x)| \les \langle x \rangle^{-12-\epsilon}$. The same method works in the case of nonselfadjoint Hamiltonians, see \cite{ersc2}, of the form
$$
\mc H = \bpm -\Delta +\mu + V_1 & V_2 \\ -V_2 & \Delta-\mu-V_1 \epm,
$$
assuming that $|V_1(x)| + |V_2(x)| \les \langle x \rangle^{-10-\epsilon}$.

At the same time, Yajima \cite{yajima_disp} proved a similar expansion for generic Hamiltonians $H=-\Delta+V$ if $|V(x)| \leq \langle x \rangle^{-5/2-\epsilon}$, for singular Hamiltonians of the first kind when $|V(x)| \leq \langle x \rangle^{-9/2-\epsilon}$, and of the second and third kind when $|V(x)| \leq \langle x \rangle^{-11/2-\epsilon}$. His main result stated the following:
\begin{theorem}[Theorem 1.3, \cite{yajima_disp}]\begin{list}{\labelitemi}{\leftmargin=1em}
\item[(1)] Let V satisfy $|V(x)| \leq C\langle x \rangle^{-\beta}$ for some $\beta > 5/2$. Suppose that $H$ is of generic type. Then, for any $1 \leq q \leq 2 \leq p \leq \infty$ such that $1/p + 1/q = 1$,
\be\lb{1.16}
\|e^{-itH}P_c u\|_p \leq C_p t^{-3(\frac 1 2 - \frac 1 p)} \|u\|_q, u \in L^2 \cap L^q.
\ee
\item[(2)] Let $V$ satisfy $|V(x )| \leq C\langle x \rangle^{-\beta}$ for some $\beta > 11/2$. Suppose that $H$ is of exceptional type. Then the following statements are satisfied:
\begin{list}{\labelitemi}{\leftmargin=1em}
\item[a)] Estimate (\ref{1.16}) holds when $p$ and $q$ are restricted to $3/2 < q 
\leq 2 \leq p < 3$ and $1/p + 1/q = 1$. 
\item[b)] Estimate (\ref{1.16}) holds when $p = 3$ and $q = 3/2$ provided that $L^3$ and $L^{3/2}$ are respectively replaced by Lorentz spaces $L^{3,\infty}$ and $L^{3/2, 1}$. 
\item[c)] When $3 < p \leq \infty$ and $1 \leq q < 3/2$ are such that $1/p + 1/q = 1$, there exists a constant $C_{pq}$ such that for any $u \in L^2 \cap L^q$, 
$$
\|(e^{-i t H} P_c - R(t) - S (t)) u\|_p \les C_{pq} t^{-3(\frac 1 2 - \frac 1 p)} \|u\|_q.
$$
If $H$ is of exceptional type of the first kind, statement (2) holds under a weaker decay condition $|V(x)| \leq C \langle x \rangle^{-\beta}$ with $\beta > 9/2$.
\end{list}
\end{list}
\end{theorem}

However, note that due to a mistake in the proof the requirement $\beta>11/2$ should be replaced by $\beta>8$.

When the zero energy eigenfunctions $\phi_k$ of $H$ have enough decay, both $R(t)$ and $S(t)$ can be taken to be zero. Indeed, in \cite{gol2} Goldberg showed that if $V \in L^{3/2-\epsilon} \cap L^{3/2+\epsilon}$ and the zero energy eigenfunctions are in $L^1$ then $\|e^{-itH} P_c u\|_{L^\infty} \les t^{-3/2} \|u\|_{L^1}$. We retrieve a similar result in our context.

Some of our results for exceptional potentials of the first kind hold under the same decay assumption as those for generic potentials: $V \in L^{3/2, 1}$. A similar fact was also recently noticed by Egorova--Kopylova--Marchenko--Teschl \cite{ekmt} in dimension one.

Several results \cite{jss} \cite{gosc} \cite{gol} \cite{becgol} address the issue of pointwise decay in the case of generic Hamiltonians --- for $L^{3/2-\epsilon} \cap L^{3/2+\epsilon}$ potentials in \cite{gol} and Kato-class potentials in \cite{becgol}.

Results obtained in other dimensions include \cite{ccv}, \cite{ekmt}, \cite{egg}, \cite{ergr1}, \cite{ergr2}, \cite{ergr3}, \cite{ergr4}, \cite{gol3}, \cite{gogr1}, \cite{gogr2}, \cite{green}, and~\cite{schlag}.

The current result, Theorem \ref{main_result}, represents an improvement on \cite{yajima_disp}, by half a power of potential decay for exceptional Hamiltonians of the first kind. We expect the rate of potential decay from Theorem \ref{main_result} to be optimal for this sort of result.

The same considerations apply in the case of exceptional Hamiltonians of the second and third kind, also leading to similar improved results. These will constitute the subject of a separate paper.

Below we mostly follow the scheme of Yajima's proof in \cite{yajima_disp}, making the changes from H\"{o}lder spaces to Wiener spaces needed to improve the result. The proof method that we use here is the same as in \cite{bec} and \cite{becgol}.

\section{Proof of the statements}
\subsection{Notations}

We denote by $L^p$ the usual Lebesgue spaces and by $L^{p, q}$, $1 \leq p, q \leq \infty$, the Lorentz spaces. Note here that $L^{p, p}=L^p$, $L^{p, \infty}$ is weak-$L^p$, and $L^{p, q_1} \subset L^{p, q_2}$ for $q_1 \leq q_2$. For the definition and further properties see \cite{bergh}.

Let Sobolev spaces be $W^{s, p}$, $s \in \set R$, $1 \leq p \leq \infty$ and denote weighted Lebesgue spaces by $f(x) L^p = \{f(x) g(x) \mid g \in L^p\}$.

Fix the Fourier transform to
$$
\widehat f(\xi) = \int_{\set R^d} e^{-ix \xi} f(x) \dd x,\ \widecheck f(x) = (2\pi)^{-d} \int_{\set R^d} e^{i\xi x} f(\xi) \dd \xi.
$$
Let $R_0(\lambda):=(-\Delta-\lambda)^{-1}$ and for $\lambda \in \R$ $R_{0a}(\lambda):=\frac 1 {i} (R_0(\lambda+i0)-R_0(\lambda-i0))$. Concerning the Fourier transform, resolvents, and the free evolution, note that with our definitions
$$\begin{aligned}
e^{itH_0} &= (R_{0a}(\lambda))^{\vee}(t);\ R_{0a}(\lambda) = (e^{itH_0})^{\wedge},\ \lambda \in \R;\\
i R_0(\lambda) &= (\chi_{[0, \infty)}(t) e^{itH_0})^{\wedge}(\lambda),\ \Im \lambda < 0.
\end{aligned}$$
Likewise let $R_V(\lambda) := (-\Delta+V-\lambda)^{-1}$.

Also, let
\begin{list}{\labelitemi}{\leftmargin=1em}
\item[$\ast$] $\chi_A$ be the characteristic function of the set $A$;
\item[$\ast$] $\mc M$ be the space of finite-mass Borel measures on $\set R$;
\item[$\ast$] $\delta_x$ denote Dirac's measure at $x$;
\item[$\ast$] $\langle x \rangle = (1+|x|^2)^{\frac 1 2}$;
\item[$\ast$] $\B(X, Y)$ be the Banach space of bounded operators from $X$ to $Y$ and $\B(X)$ be the Banach space of bounded operators from $X$ to itself;
\item[$\ast$] $C$ be any constant (not always the same throughout the paper);
\item[$\ast$] $a \les b$ mean $|a| \leq C |b|$;
\item[$\ast$] $\mc S$ be the Schwartz space;
\item[$\ast$] $u \otimes v$ mean the rank-one operator $\langle \cdot, v \rangle u$;
\item[$\ast$] $K(x, y)$ denote the operator having $K(x, y)$ as integral kernel.
\end{list}

For a potential $V$, let $V_1 = |V|^{1/2}$ and $V_2 = |V|^{1/2} \sgn V$.

\subsection{Auxiliary results}
Recall that $\mc M$ is the kernel of $I+R_0(0) V$ in $L^\infty$.
\begin{lemma}\lb{lemma_11}
Let $V \in L^{3/2, 1}$; then $\mc M \subset L^{3, \infty}$. Conversely, any $\phi \in L^{3, \infty}$ that satisfies the equation $\phi + R_0(0) V \phi = 0$ must be in $L^\infty$, hence in $\mc M$.
\end{lemma}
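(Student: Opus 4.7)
The plan is the following. For the forward direction, let $\phi\in\mc M$, so $\phi\in L^\infty$ and $\phi=-R_0(0)(V\phi)$. Hölder's inequality for Lorentz spaces (multiplication by an $L^\infty$ function preserves $L^{3/2,1}$) gives $V\phi\in L^{3/2,1}$, and using \eqref{eq_3.55} I would work with the explicit representation
\begin{equation*}
\phi(x)=-\frac{1}{4\pi}\int_{\R^3}\frac{V(y)\phi(y)}{|x-y|}\,dy.
\end{equation*}
The goal is a pointwise bound of the form $|\phi(x)|\lesssim(1+|x|)^{-1}$, which combined with the uniform $L^\infty$ bound produces the distribution-function estimate $|\{|\phi|>t\}|\lesssim t^{-3}$ characterizing $L^{3,\infty}$. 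I'd split the integral at $|y|=|x|/2$: on the tail $|y|>|x|/2$, pair $\|V\phi\,\mathbf{1}_{|y|>|x|/2}\|_{L^{3/2,1}}$ (which vanishes as $|x|\to\infty$ by absolute continuity of the $L^{3/2,1}$-norm) against the translation-invariant $\||\cdot|^{-1}\|_{L^{3,\infty}}$ using Hölder--Lorentz; on the near region $|y|\le|x|/2$, exploit $|x-y|\ge|x|/2$ to extract a $|x|^{-1}$ prefactor, and control the local integral $\int_{B_{|x|/2}}|V\phi|\,dy$ via the duality $(L^{3/2,1})^{*}=L^{3,\infty}$ together with $\|\mathbf{1}_{B_R}\|_{L^{3,\infty}}\sim R$.

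For the converse, assume $\phi\in L^{3,\infty}$ satisfies $\phi+R_0(0)V\phi=0$. Hölder--Lorentz applied to $V\in L^{3/2,1}$ and $\phi\in L^{3,\infty}$ gives $V\phi\in L^{1,1}=L^1$. I would first promote $\phi$ to $L^\infty_{\mathrm{loc}}$: the elliptic identity $-\Delta\phi+V\phi=0$ with singular potential $V\in L^{3/2,1}$ and initial regularity $\phi\in L^p_{\mathrm{loc}}$ for every $p<3$ admits a Moser/De Giorgi bootstrap (the class $L^{3/2,1}$ is precisely on the Kato/Stummel threshold needed for this). Once $\phi$ is locally bounded, the product $V\phi$ lies in $L^{3/2,1}$ globally (Hölder--Lorentz), and the endpoint Hardy--Littlewood--Sobolev estimate $R_0(0):L^{3/2,1}\to L^\infty$ applied to $\phi=-R_0(0)(V\phi)$ yields $\phi\in L^\infty$.

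The principal obstacle I anticipate is the forward direction. The operator bound $R_0(0):L^{3/2,1}\to L^\infty$ is sharp, and one can exhibit $f\in L^{3/2,1}$ (e.g.\ $f(y)\sim |y|^{-2}(\log|y|)^{-2}$ at infinity) for which $R_0(0)f$ is merely $O((\log|x|)^{-2})$, hence not in $L^{3,\infty}$; thus the $L^{3,\infty}$ conclusion for $\phi$ cannot be read off from operator inequalities alone but must use that $V\phi$ arises from the self-consistent fixed-point equation. The refinement $L^{3/2,1}\subsetneq L^{3/2}$ has to be leveraged at the level of the rearrangement integral to upgrade the crude bound $\int_{B_R}|V\phi|\lesssim R$ to an $o(R)$ statement, and then combined with a truncation/approximation of $V$ by compactly supported bounded functions---for which $V_n\phi\in L^1$ and $R_0(0)(V_n\phi)\in L^{3,\infty}$ trivially---to recover the required distribution-function decay of $\phi$.
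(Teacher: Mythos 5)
Your forward direction contains a genuine gap, and in fact the target you set yourself there is unattainable. You aim for the pointwise bound $|\phi(x)|\lesssim(1+|x|)^{-1}$, but under the hypothesis $V\in L^{3/2,1}$ alone no such bound holds: your own tail estimate only produces $o(1)$ as $|x|\to\infty$ (pairing $\|V\phi\,\chi_{|y|>|x|/2}\|_{L^{3/2,1}}\to 0$ against $\||\cdot|^{-1}\|_{L^{3,\infty}}$ gives a vanishing quantity, not one of size $|x|^{-1}$), and your own counterexample $f\sim|y|^{-2}(\log|y|)^{-2}$ shows why nothing better is available. The $|x|^{-1}$ decay is exactly the content of Lemma \ref{lemma_12}, which requires the extra weight $V\in\langle x\rangle^{-1}L^{3/2,1}$. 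So membership in $L^{3,\infty}$ must be obtained as a space statement, not a pointwise one. You correctly diagnose this and propose truncating $V$, but the truncation as you describe it does not close: knowing $R_0(0)(V_n\phi)\in L^{3,\infty}$ for each bounded compactly supported truncation $V_n$ says nothing about $\phi$, because the error $R_0(0)\big((V-V_n)\phi\big)$ is exactly as intractable as the original problem. The missing idea is to make the remainder \emph{small}: write $V=V_1+V_2$ with $V_1$ smooth and compactly supported and $\|V_2\|_{L^{3/2,1}}\ll 1$, rewrite the resonance equation as $(I+R_0(0)V_2)\phi=-R_0(0)V_1\phi$, and invert $I+R_0(0)V_2$ by a Neumann series that converges simultaneously on $L^{3,\infty}$ (since $L^{3,\infty}\cdot L^{3/2,1}\subset L^1$ and $R_0(0):L^1\to L^{3,\infty}$) and on $L^\infty$ (since $L^\infty\cdot L^{3/2,1}\subset L^{3/2,1}$ and $R_0(0):L^{3/2,1}\to L^\infty$). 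Then $V_1\phi\in L^1$ gives $R_0(0)V_1\phi\in L^{3,\infty}$ and the bounded inverse finishes the forward direction. This is the paper's entire proof.

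Your converse is closer to workable but takes a much heavier route than necessary, and as written has its own gap: De Giorgi--Nash--Moser gives $\phi\in L^\infty_{\mathrm{loc}}$, but to conclude $V\phi\in L^{3/2,1}$ globally you need a \emph{uniform} local bound $\sup_x\|\phi\|_{L^\infty(B_1(x))}<\infty$; this does follow because the local constants depend only on $\|V\|_{L^{3/2,1}(B_2(x))}$ and $\|\phi\|_{L^{3,\infty}}$, but you must say so. The same $V=V_1+V_2$ splitting disposes of the converse in one line: $V_1\phi$ is compactly supported and in $L^{3,\infty}$, hence in $L^{3/2,1}$, so $R_0(0)V_1\phi\in L^\infty$, and the Neumann inverse is bounded on $L^\infty$.
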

\begin{proof}[Proof of Lemma \ref{lemma_11}]
Let $V = V^1 + V^2$, where $V^1$ is smooth of compact support and $\|V^2\|_{L^{3/2, 1}} <<1$. Then, if $\phi$ solves the equation,
$$\begin{aligned}
\phi &= -(I + R_0(0)V^2)^{-1} R_0(0) V^1 \phi\\
&= -\Big(\sum_{k=0}^\infty (-1)^k (R_0(0)V^2)^k\Big) R_0(0) V^1 \phi.
\end{aligned}$$
where the inverse is the sum of a Neumann series, thus bounded on $L^{3, \infty}$ and on $L^\infty$.

If $\phi \in L^\infty$, then $V^1 \phi \in L^1$, hence $R_0(0) V^1 \phi \in L^{3, \infty}$, so $\phi \in L^{3, \infty}$.

If $\phi \in L^{3,\infty}$, then $V^1 \phi \in L^{3/2, 1}$, hence $R_0(0) V^1 \phi \in L^{\infty}$, so $\phi \in L^{\infty}$.
\end{proof}

\begin{lemma}\lb{lemma_11'} The quadratic form $-\langle u, V v\rangle$ is an inner product on $\mc M$.
\end{lemma}
\begin{proof} Suppose $u$, $v \in \mc M$. By the definition of $\mc M$, observe that $-\langle u, V v \rangle = \langle u, -\Delta v \rangle$, where $u \in L^{3, \infty} \cap L^\infty$ by Lemma \ref{lemma_11} and $-\Delta v = V v \in L^1 \cap L^{3/2, 1}$. Thus the pairing is well-defined.

Furthermore, $\nabla u = \nabla R_0(0) V u \in L^{3/2, \infty} \cap L^{3, \infty} \subset L^2$ and same for $\nabla v$, so their pairing is also well-defined and we can write $(u, -\Delta v) = (\nabla u, \nabla v)$.

This expression is positively defined because, setting $u=v$, $\langle\nabla u, \nabla u\rangle=0$ implies that $u$ is constant, hence, in view of the fact that $u \in L^{3, \infty}$ by Lemma \ref{lemma_11}, $u=0$.
\end{proof}

Recall that $\mc E = \mc M \cap L^2$.
\begin{lemma}\lb{lemma_12}
Assume that $V \in L^{3/2, 1}$. Then, for any $\phi \in \mc M$, $\phi(x) \in~\langle x \rangle^{-1} L^\infty$.

Assume that $V \in L^1 \cap L^{3/2, 1}$. Then, for any $\phi \in \mc M$, $\ds\phi(x) - \frac {\langle \phi, V \rangle} {4\pi|x|} \in |x|^{-1} L^{3, \infty} \cap |x|^{-1} L^\infty \subset L^2$. Thus $\phi \in \mc M$ is in $\mc E$ if and only if $\langle \phi, V \rangle = 0$; thus $\codim_{\mc M} \mc E \leq 1$. Also, $\mc E \subset \langle x \rangle^{-2} L^\infty$.
\end{lemma}
\begin{proof}[Proof of Lemma \ref{lemma_12}] First, assume that $V \in L^{3/2, 1}$. Rewrite the eigenfunction equation
$$
\phi(x) = - \frac 1 {4\pi} \int_{\R^3} \frac 1 {|x-y|} V(y) \phi(y) \dd y
$$
as
$$\begin{aligned}
|x| \phi(x) + \frac 1 {4\pi} \int_{|y| \geq R} \frac {|x|-|x-y|}{|x-y||y|} V(y) |y| \phi(y) \dd y = -\frac 1 {4\pi} \int_{\R^3} V(y) \phi(y) \dd y - \\
\frac 1 {4\pi} \int_{|y| \leq R} \frac {|x|-|x-y|}{|x-y|} V(y) \phi(y) \dd y.
\end{aligned}$$
Note that $||x|-|x-y||\leq |y|$ and $\lim_{R \to \infty} \|\chi_{|x| \geq R}(x) V(x)\|_{L^{3/2, 1}} = 0$. Then, for sufficiently large $R$ we can invert
$$
(T_0 \phi)(x) = \phi(x)+\frac 1 {4\pi} \int_{|y| \geq R} \frac {|x|-|x-y|}{|x-y||y|} V(y) \phi(y) \dd y
$$
as an operator in $\B(L^\infty)$. Since $\phi(y) \in L^{3, \infty} \cap L^\infty$, the right-hand side is in $L^\infty$, so we obtain that $|x| \phi(x) \in L^\infty$.

Next, assume that $V \in L^1 \cap L^{3/2, 1}$. Start from
$$\begin{aligned}
\phi(x) - \frac {\langle \phi, V \rangle} {4\pi|x|} &= -\frac 1 {4\pi} \int_{\R^3} \bigg(\frac 1 {|x-y|} - \frac 1 {|x|}\bigg) V(y) \phi(y) \dd y\\
&= -\frac 1 {4\pi |x|} \int_{\R^3} \frac {|x|-|x-y|} {|x-y|} V(y) \phi(y) \dd y
\end{aligned}$$
which is bounded in absolute value by $\ds \frac 1 {4\pi|x|} \int_{\R^3} \frac{|y| |V(y)| |\phi(y)|}{|x-y|} \dd y$. Since $\phi \in \langle x \rangle^{-1} L^\infty$ and $V \in L^1 \cap L^{3/2, 1}$, this expression is in $|x|^{-1} L^\infty \cap |x|^{-1} L^{3, \infty} \subset \langle x \rangle^{-1} L^{3, \infty} \subset L^2$.

Since whenever $\langle \phi, V \rangle \ne 0$ $\frac {\langle \phi, V \rangle} {4\pi|x|} \not \in L^2$, it follows that it is necessary and sufficient for $\phi$ to be in $L^2$ that $\langle \phi, V \rangle = 0$.

The space $\mc E$ is then the kernel of the rank-one map $\phi \mapsto \langle \phi, V\rangle$ from $\mc M$ to $\C$, so it has codimension at most $1$.

Finally, we already know that $\mc E \subset \mc M \subset \langle x \rangle^{-1} L^\infty$. The eigenfunction equation for a function $\phi \in \mc E$, for which $\langle \phi, V \rangle = 0$, can be written as
$$
\phi(x) = -\frac 1 {4\pi} \int_{\R^3} \frac {|x|-|x-y|}{|x-y||x|} V(y) \phi(y) \dd y.
$$
We further rewrite it as
$$\begin{aligned}
&|x|^2\phi(x) + \frac 1 {4\pi} \int_{|y| \geq R} \frac {(|x|-|x-y|)^2}{|x-y||y|^2} V(y) |y|^2 \phi(y) \dd y = \\
&= -\frac 1 {4\pi} \int_{\R^3} (|x|-|x-y|) V(y) \phi(y) \dd y - \frac 1 {4\pi} \int_{|y| \leq R} \frac {(|x|-|x-y|)^2}{|x-y|} V(y) \phi(y) \dd y.
\end{aligned}$$
The right-hand side is in $L^\infty$ and for sufficiently large $R$ the left-hand side is invertible, as above. This shows that $|x|^2 \phi(x) \in L^\infty$.
\end{proof}

We can continue the asymptotic expansion of eigenfunctions to any order, but first we need the following lemma.
\begin{lemma}\lb{inegal} For $x$, $y \in \R^3$
\be\lb{inequality}
\Big|\frac 1 {|x-y|} - \Big(\frac 1 {|x|} + \frac {xy}{|x|^3}\Big)\Big| \les \frac {|y|^2}{|x|^2|x-y|}
\ee
and
\be\lb{inequality2}
\Big|\frac 1 {|x-y|} - \Big(\frac 1 {|x|} + \frac {xy}{|x|^3} + \frac {|y|^2}{2|x|^3} - \frac {3(xy)^2}{2|x|^5}\Big)\Big| \les \frac {|y|^3}{|x|^3|x-y|}.
\ee
\end{lemma}
More generally, it seems to be the case (one can prove by induction) that
$$
\Big|\frac 1 {|x+y|} - \sum_{k=0}^N d^k \frac 1 {|x-\cdot|} (y, \ldots, y)\Big| \les \frac {|y|^{N+1}}{|x|^{N+1}|x-y|}.
$$
\begin{proof}[Proof of Lemma \ref{inegal}]
Indeed, we start from
\be\lb{eq_1}
(|x|^2+2xy+|y|^2)^{1/2}-(|x|^2)^{1/2} = \frac {2xy}{|x+y|+|x|} + \frac {|y|^2}{|x+y|+|x|}.
\ee
Then
$$
\Big|\frac {2xy}{|x+y|+|x|} - \frac {xy}{|x|}\Big| = \Big|\frac {xy(|x|-|x+y|)}{(|x+y|+|x|)|x|}\Big| \les \frac{|y|^2}{|x|}.
$$
Therefore
\be\lb{ineg_1}
\Big||x+y|-|x|-\frac {xy}{|x|}\Big| \les \frac {|y|^2}{|x|}.
\ee
Consequently
$$
\big||x|^2(|x|-|x-y|) - xy |x-y|\big| \leq |x|^2\Big||x-y|-|x|+ \frac {xy}{|x|}\Big| + |xy(|x|-|x-y|)| \les |y|^2|x|.
$$
Dividing by $|x|^3|x-y|$ we obtain (\ref{inequality}).

We next perform a more detailed analysis of the same inequality. In (\ref{eq_1}), by (\ref{ineg_1})
$$\begin{aligned}
\Big|\frac{xy(|x|-|x+y|)}{(|x+y|+|x|)|x|} + \frac {(xy)^2}{2|x|^3}\Big| &\les \Big|\frac{xy(|x|-|x+y|)}{(|x+y|+|x|)|x|} - \frac {xy(|x|-|x+y|)}{2|x|^2}\Big| +\\
&+ \Big|\frac {xy(\frac{xy}{|x|}+|x|-|x+y|)}{2|x|^2}\Big| \les \frac {|y|^3}{|x|^2}.
\end{aligned}$$
Furthermore, also in (\ref{eq_1}),
$$
\frac {|y|^2}{|x+y|+|x|} - \frac {|y|^2}{2|x|} \les \frac {|y|^3}{|x|^2}.
$$
Therefore
\be\lb{ineg_2}
\Big||x+y|-|x|-\frac{xy}{|x|}-\frac{|y|^2}{2|x|}+\frac{(xy)^2}{2|x|^3}\Big| \les \frac {|y|^3}{|x|^2}.
\ee
By (\ref{ineg_1}) and (\ref{ineg_2}) we then obtain (\ref{inequality2}).
\end{proof}

We can now establish the asymptotic expansion of eigenfunctions.
\begin{lemma}\lb{decay_lemma} Assume that $V \in L^1 \cap L^{3/2, 1}$. Let $\phi \in \mc E$ be a zero energy eigenfunction of $H$. Then 
$$
\phi(x) - \sum_{k=1}^3 \langle V \phi, y_k \rangle \frac {x_k}{|x|^3} \in |x|^{-2} (L^{3, \infty} \cap L^\infty).
$$
Further assume that $V \in \langle x \rangle^{-1} L^1 \cap L^{3/2, 1}$. Then
$$
\phi(x) - \sum_{k=1}^3 \langle V \phi, y_k \rangle \frac {x_k}{|x|^3} - \sum_{k, \ell=1}^3 \langle \phi V, y_k y_\ell \rangle \Big(\frac {\delta_{k\ell}}{2|x|^3} - \frac {3 x_k x_\ell}{2|x|^5} \Big) \in |x|^{-3} (L^{3, \infty} \cap L^\infty).
$$
In particular, $\phi \in \mc E$ is in $L^1$ if and only if $\langle V \phi, y_k \rangle = 0$ and $\langle V \phi, y_k y_\ell \rangle = 0$ for $1 \leq k, \ell \leq 3$.

Let $\mc E_1 := \mc E \cap L^1$. Then $\codim_{\mc E} \mc E_1 \leq 12$.
\end{lemma}
\begin{proof}[Proof of Lemma \ref{decay_lemma}]
We start from the eigenfunction equation
$$
\phi(x) = -\frac 1 {4\pi} \int_{\R^3} \frac 1 {|x-y|} V(y) \phi(y) \dd y.
$$
Recall that $\langle \phi, V \rangle = 0$. Using (\ref{inequality}) we obtain that
$$
\Big|\phi(x) - \sum_{k=1}^3 \langle V \phi, y_k \rangle \frac {x_k}{|x|^3}\Big| \les \frac 1 {|x|^2} \int_{\R^3} \frac{|y|^2|V(y)||\phi(y)|\dd y}{|x-y|}.
$$
Since $\phi \in \langle x \rangle^{-2} L^\infty$ and $V \in L^1 \cap L^{3/2, 1}$, the right-hand side is in $|x|^{-2} (L^{3, \infty} \cap L^\infty)$.

Using (\ref{inequality2}) we obtain instead that
$$
\Big|\phi(x) - \sum_{k=1}^3 \langle \phi V, y_k \rangle \frac {x_k}{|x|^3} - \sum_{k, \ell=1}^3 \langle \phi V, y_k y_\ell \rangle \Big(\frac {\delta_{k\ell}}{2|x|^3} - \frac {3 x_k x_\ell}{2|x|^5} \Big)\Big| \les \frac 1 {|x|^3} \int_{\R^3} \frac{|y|^3|V(y)||\phi(y)|\dd y}{|x-y|}.
$$
Since $\phi \in \langle x \rangle^{-2} L^\infty$ and $V \in \langle x \rangle^{-1} L^1 \cap L^{3/2, 1}$, the right-hand side is in $|x|^{-3} (L^{3, \infty} \cap L^\infty)$.

These estimates matter only in the region $\{x:|x| \geq 1\}$, since near zero $\phi \in L^\infty \subset L^1(\{|x| \leq 1\})$. As $|x|^{-3} L^{3, \infty} \subset L^1(\{|x| \geq 1\})$ and $\frac {x_k}{|x|^3}, \frac {\delta_{k\ell}}{2|x|^3} - \frac {3 x_k x_\ell}{2|x|^5} \not \in L^1$ are linearly independent, it follows that $\phi \in \mc E$ is in $L^1$ if and only if all the coefficients $\langle V\phi, y_k \rangle$ and $\langle V\phi, y_k y_\ell \rangle$ are zero.

Then $\mc E_1$ is the kernel of a rank-$12$ map $\phi \mapsto (\langle \phi V, y_k \rangle, \langle \phi V, y_k y_\ell \rangle)$ from $\mc E$ to $\C^{12}$, so $\codim_{\mc E} \mc E_1 \leq 12$.
\end{proof}

\subsection{Wiener spaces}

\begin{definition}
For a Banach lattice $X$, let the space $\V_X$ consist of kernels $T(x, y, \sigma)$ such that, for each pair $(x, y)$, $T(x, y, \sigma)$ is a finite measure in $\sigma$ on $\R$ and
$$
M(T)(x, y):=\int_{\R} \dd |T(x, y, \sigma)|
$$
is an $X$-bounded operator.
\end{definition}

$\V_X$ is an algebra under
$$
(T_1 \ast T_2) (x, z, \sigma) := \int T_1(x, y, \rho) T_2(y, z, \sigma-\rho) \dd y \dd s.
$$
Elements of $\V_X$ have Fourier transforms
$$
\widehat T(x, y, \lambda) := \int_{\R} e^{-i\sigma\lambda} \dd T(x, y, \sigma)
$$
which are uniformly $X$-bounded operators, $\widehat T(\lambda) \in L^\infty_\lambda \B(X)$, and, for every $\lambda \in \R$, $\widehat T_1(\lambda) \widehat T_2(\lambda) = (T_1 \ast T_2)^{\wedge}(\lambda)$.

The space $\V_X$ contains elements of the form $\delta_0(\sigma) T(x, y)$, whose Fourier transform is constantly the operator $T(x, y) \in \B(X)$. In particular, rank-one operators $\delta_0(\sigma) \phi(x) \otimes \psi(y)$ are in $\V_X$ when $\psi \in X^*$, $\phi \in X$. More generally, $f(\sigma) T(x, y) \in \V_X$ if $f \in L^1$ and $T \in \B(X)$.

Moreover, for two Banach lattices $X$ and $Y$ of functions on $\R^3$, we also define the space $\V_{X, Y}$ of kernels $T(x, y, \sigma)$ such that $M(T)(x, y)$ is a bounded operator from $X$ to $Y$. The category of such operators forms an algebroid, in the sense that
$$
\|T_1 \ast T_2\|_{\V_{X, Z}} \leq \|T_1\|_{\V_{Y, Z}} \|T_2\|_{\V{X, Y}}.
$$

For example, note that $(R_{0}((\lambda+i0)^2))^\wedge \in \V_{L^{3/2, 1}, L^\infty} \cap \V_{L^1, L^{3, \infty}}$ and $(\partial_{\lambda} R_{0}((\lambda+i0)^2))^\wedge \in \V_{L^1, L^\infty}$. Indeed, the Fourier transform in $\lambda$ is
$$
(R_{0}((\lambda+i0)^2))^\wedge(\sigma)(x, y) = (4\pi \sigma)^{-1} \delta_{|x-y|}(\sigma),
$$
so $\ds M((R_{0}((\lambda+i0)^2))^\wedge) = \frac 1 {4\pi|x-y|}$. Clearly $\ds \frac 1 {4\pi|x-y|}$ is in $\B(L^{3/2, 1}, L^\infty) \cap \B(L^1, L^{3, \infty})$.

Likewise, $(\partial_\lambda R_{0}((\lambda+i0)^2))^\wedge(\sigma)(x, y) = (4\pi)^{-1} i  \delta_{|x-y|}(\sigma)$, so $\ds M((\partial_\lambda R_0((\lambda+i0)^2))^\wedge) = (4\pi)^{-1} 1 \otimes 1$, which is in $\B(L^1, L^\infty)$.

A space that will repeatedly intervene in computations is
\begin{definition}
$\W=\{L \mid L^\vee \in \V_{L^{3/2, 2}} \cap \V_{L^{3, 2}},\ (\partial_{\lambda} L)^\vee \in \V_{L^{3/2, 2}, L^{3, 2}}\}$.
\end{definition}
This space has the algebra property that $L_1, L_2 \in \W \implies L_1(\lambda) L_2(\lambda) \in~\W$.

The following technical lemma will be useful:
\begin{lemma}[Fourier transforms]\lb{fourier}
$$\begin{aligned}
M\Big(\Big(\frac {e^{is|x-y|}}{4\pi|x-y|}\Big)^\wedge\Big) &= \frac 1 {4\pi|x-y|};\\
M\Big(\Big(\partial_s \frac {e^{is|x-y|}}{4\pi|x-y|}\Big)^\wedge\Big) &= \frac {1 \otimes 1}{4\pi};\\
M\Big(\Big(\frac{R_0((s+i0)^2)-R_0(0)}{s}\Big)^\wedge\Big) &= \frac {1 \otimes 1} {4\pi};\\
M\Big(\Big(\partial_s \frac{R_0((s+i0)^2)-R_0(0)}{s}\Big)^\wedge\Big) &= \frac {|x-y|} {8\pi};\\
M\Big(\displaystyle \Big(\frac{R_0((s+i0)^2)-R_0(0)-is \frac{1 \otimes 1}{4\pi}}{s^2}\Big)^\wedge\Big) &= \frac {|x-y|} {8\pi};\\
M\Big(\Big(\partial_s \displaystyle \frac{R_0((s+i0)^2)-R_0(0)-is \frac{1 \otimes 1}{4\pi}}{s^2}\Big)^\wedge\Big) &= \frac{|x-y|^2} {24\pi}.
\end{aligned}
$$
\end{lemma}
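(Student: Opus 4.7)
The plan is to exhibit, for each expression $f(s)$ on the left-hand side, an explicit signed measure $dT(x,y,t)$ whose Fourier transform in $s$ is $f$, and then read off $M(f)$ as the total variation in $t$. Abbreviating $r=|x-y|$, the basic building block is $R_0(s^2)(x,y)=\frac{1}{4\pi r}e^{isr}$, which (with the paper's sign convention for $R_0^{\vee}$) is the Fourier transform in $s$ of the point mass $\frac{1}{4\pi r}\,\delta_{\pm r}(t)$, so its total variation is $\frac{1}{4\pi r}$; this handles the first line.

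The remaining five formulas follow by combining two elementary Fourier-side operations with this identity. First, differentiating in $s$ multiplies $dT$ by $-it$; applied to a delta at $\pm r$ this rescales the amplitude by $r$, so $\partial_s(e^{isr}/r)=ie^{isr}$ is the Fourier transform of a delta of mass $1$, producing the second line $\frac{1\otimes 1}{4\pi}$ after including the $4\pi$ prefactor. Second, dividing by $s$ after subtracting the value at $s=0$ corresponds on the $t$-side to the identity
\begin{equation*}
\frac{e^{isr}-1}{is}=\int_0^r e^{isu}\,du,
\end{equation*}
so the delta $\delta_r$ is replaced by the uniform density $\chi_{[0,r]}(u)\,du$ of total mass $r$, and the $\frac{1}{4\pi r}$ prefactor produces $\frac{1\otimes 1}{4\pi}$, the third line.

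Iterating these two operations handles the last three identities. Applying $\partial_s$ to the uniform density $\frac{1}{4\pi r}\chi_{[0,r]}(u)\,du$ multiplies it pointwise by $u$, whose integral is $\int_0^r\frac{u}{4\pi r}\,du=\frac{r}{8\pi}$, giving the fourth line. For the fifth line, subtracting the linear Taylor term $\frac{is}{4\pi}(1\otimes 1)$ before dividing by $s^2$ amounts to a second integration,
\begin{equation*}
\frac{e^{isr}-1-isr}{(is)^2}=\int_0^r\!\!\int_0^u e^{isv}\,dv\,du=\int_0^r(r-v)\,e^{isv}\,dv,
\end{equation*}
so the density becomes $\frac{1}{4\pi r}(r-v)\chi_{[0,r]}(v)$, of total mass $\frac{r}{8\pi}$. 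A final $\partial_s$ multiplies this density by $v$, yielding $\int_0^r\frac{v(r-v)}{4\pi r}\,dv=\frac{r^2}{24\pi}$, the sixth line.

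No step is genuinely difficult: each identity reduces to a one-line Fourier bookkeeping once $e^{isr}$ is recognized as the Fourier transform of a point mass at $\pm r$. The only care required is to fix the sign conventions consistently and to check that in each case the density produced above is of one sign on its support $[0,r]$, so that its total variation coincides with the integral of the density itself and the constants $\frac{1}{4\pi}$, $\frac{1}{8\pi}$, $\frac{1}{24\pi}$ on the right are reproduced exactly rather than as upper bounds.
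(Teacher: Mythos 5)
Your proposal is correct and follows essentially the same route as the paper: identify the explicit time-side measures (point mass at $r$, uniform density on $[0,r]$, triangular density $(r-v)\chi_{[0,r]}(v)$, all weighted by $\frac{1}{4\pi r}$), use that $\partial_s$ corresponds to multiplication by $\pm it$, and integrate the resulting single-signed densities to get the stated constants. The remark about checking single-signedness so that total variation equals the plain integral is exactly the point the paper uses implicitly.
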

\begin{proof}
Let $a>0$. Observe that the Fourier transform of $e^{i\lambda a}$ in $\lambda$ is $\delta_a(t)$. Then
$$
\frac {e^{i\lambda a}-1} {i\lambda} = \int_0^a e^{i\lambda b} \dd b,
$$
so $\Big(\displaystyle \frac {e^{i\lambda a}-1} {i\lambda}\Big)^{\wedge}=\chi_{[0, a]}(\lambda)$. Also
$$
\frac {e^{i\lambda a} - 1 - i\lambda a}{i\lambda^2} = \int_0^a \frac {e^{i\lambda b}-1}\lambda \dd b,
$$
so $\displaystyle\Big(\frac {e^{i\lambda a} - 1 -i\lambda a}{i\lambda^2}\Big)^{\wedge}=(a-t)\chi_{[0, a]}(t)$.

Note that $\displaystyle R_0((s+i0)^2) = \frac {e^{is|x-y|}}{4\pi|x-y|}$ has the Fourier transform $\frac {\delta_{|x-y|}(\sigma)}{4\pi|x-y|}$. Thus
$$
R_0((s+i0)^2)^\wedge = \Big(\frac {e^{is|x-y|}}{4\pi|x-y|}\Big)^\wedge = \frac {\delta_{|x-y|}(\sigma)}{4\pi|x-y|}.
$$
Integrating the absolute value in $\sigma$ we obtain $\ds\frac 1 {4\pi|x-y|}$.

Likewise, $\displaystyle \Big(\frac{R_0((s+i0)^2)-R_0(0)}{s}\Big)^{\wedge} = \displaystyle \frac{i\chi_{[0, |x-y|]}(\sigma)}{4\pi|x-y|}$. Integrating the absolute value in $\sigma$ we get $\ds\frac 1 {4\pi} =\frac{1 \otimes 1}{4\pi}$.

The Fourier transform of the derivative is $\displaystyle \Big(\partial_s \frac{R_0((s+i0)^2)-R_0(0)}{s}\Big)^{\wedge} = \displaystyle \frac{i\sigma \chi_{[0, |x-y|]}(\sigma)}{4\pi|x-y|}$. Integrating in $\sigma$ we obtain $\ds \frac{|x-y|}{8\pi}$.

Next,
\be\begin{aligned}\lb{dif2}
\Big(\displaystyle \frac{R_0((s+i0)^2)-R_0(0)-is 1 \otimes 1}{s^2}\Big)^{\wedge} &= \Big(\frac {e^{is|x-y|}-1-is|x-y|}{4\pi s^2|x-y|}\Big)^{\wedge}\\
&= \displaystyle \frac {(|x-y|-\sigma)\chi_{[0, |x-y|]}(\sigma)}{4\pi |x-y|}.
\end{aligned}\ee
Integrating in $\sigma$ we obtain $\ds \frac{|x-y|}{8\pi}$.

The Fourier transform of the derivative is
$$
\Big(\partial_s \displaystyle \frac{R_0((s+i0)^2)-R_0(0)-is 1 \otimes 1}{s^2}\Big)^{\wedge} =\displaystyle \frac {\sigma(|x-y|-\sigma)\chi_{[0, |x-y|]}(\sigma)}{4\pi|x-y|}.
$$
Integrating in $\sigma$ we obtain $\ds \frac {|x-y|^2}{24\pi}$.

\end{proof}

\subsection{Regular points and regular Hamiltonians}
Before examining the possible singularity at zero, we study what happens at regular points in the spectrum.

Recall the notation $V_1=|V|^{1/2}$ and $V_2=|V|^{1/2} \sgn V$. The following two properties play an important part in the study:
\begin{lemma}\lb{lemma_22} Let $T(x, y, \rho) := \frac {V_2(x) V_1(y)}{4\pi|x-y|}\delta_{-|x-y|}(\rho)$, so $\widehat T(\lambda) = V_2 R_0((\lambda+~i0)^2) V_1$.
\begin{list}{\labelitemi}{\leftmargin=1em}
\item[C1.] $\lim_{R\to \infty} \|\chi_{\rho\geq R}(\rho) T(\rho)\|_{\V_{L^{3/2, 2}} \cap \V_{L^{3, 2}}} =0$.
\item[C2.] For some $N \geq 1$ $\lim_{\epsilon \to 0} \|T^N(\rho+\epsilon) - T^N(\rho)\|_{\V_{L^{3/2, 2}} \cap \V_{L^{3, 2}}} = 0$.
\end{list}
\end{lemma}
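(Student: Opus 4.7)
The plan is to reduce both statements to a density argument, exploiting the fact that $V \in L^{3/2,1}$ implies $V_1, V_2 \in L^{3,2}$ and that smooth compactly supported functions are dense in $L^{3,2}$.

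From the Fourier calculation in Lemma \ref{fourier} one reads off $M(T)(x,y) = \frac{|V_2(x)||V_1(y)|}{4\pi|x-y|}$. The crucial preliminary estimate I would establish is that, for any $f_1, f_2 \in L^{3,2}$, the integral operator with kernel $\frac{f_2(x) f_1(y)}{|x-y|}$ is bounded on both $L^{3/2,2}$ and $L^{3,2}$, with norm $\lesssim \|f_1\|_{L^{3,2}} \|f_2\|_{L^{3,2}}$. On $L^{3,2}$, H\"older in Lorentz spaces places $f_1 \cdot f$ in $L^{3/2,1}$; the endpoint Hardy--Littlewood--Sobolev bound $I_2 : L^{3/2,1} \to L^\infty$ (immediate from H\"older against $1/|x| \in L^{3,\infty}$) pushes this to $L^\infty$; and multiplication by $f_2 \in L^{3,2}$ returns to $L^{3,2}$. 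The $L^{3/2,2}$ bound follows by duality, since $(L^{3/2,2})^* = L^{3,2}$ and the adjoint has the same bilinear structure with $f_1, f_2$ swapped.

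With this tool in hand C1 is immediate. Split $V_j = V_j' + V_j''$ with $V_j' \in C_c^\infty$ supported in $B_{R_0}$ and $\|V_j''\|_{L^{3,2}} < \eta$. The $V_2' V_1'$ piece of $M(\chi_{t \geq R} T)$ vanishes once $R > 2R_0$, since $|x-y|$ would have to exceed $R$ while both $x,y$ lie in $B_{R_0}$. The three remaining pieces each carry one factor of norm $<\eta$ in $L^{3,2}$, so the key estimate bounds their $\V$-norms by $C\eta$. Sending $\eta \to 0$ yields C1.

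For C2 I would take $n = 2$. Composing kernels with $R_0^\vee(t)(x,y) = \delta_{|x-y|}(t)/(4\pi|x-y|)$ yields
\[
T^2(x,z,t) = \frac{V_2(x) V_1(z)}{(4\pi)^2} \int_{\R^3} \frac{V(y)\,\delta(|x-y|+|y-z|-t)}{|x-y|\,|y-z|} \dd y,
\]
a surface integral over the ellipsoid with foci $x, z$. A change of variables shows $\int_\R |T^2(x,z,t)| \dd t \leq \frac{|V_2(x)||V_1(z)|}{(4\pi)^2} \int \frac{|V(y)|}{|x-y||y-z|} \dd y$, which defines a bounded operator on the target spaces by iterating the key estimate. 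Thus for a.e.\ $(x,z)$ the map $t \mapsto T^2(x,z,t)$ lies in $L^1(\R_t)$, so $\int |T^2(x,z,t+\epsilon) - T^2(x,z,t)| \dd t \to 0$ pointwise by translation continuity in $L^1$. To upgrade this to convergence in $\V$-norm, I apply the density splitting once more: the expansion $T^2 - (T')^2 = T'(T - T') + (T - T')T' + (T - T')^2$ is $O(\eta)$ in $\V$ by the key estimate, while for the smooth compactly supported $(T')^2$ the dominating kernel is bounded and compactly supported in $(x,z)$, so a Hilbert--Schmidt / dominated convergence argument on $L^2$ combined with standard Lorentz-on-bounded-domain embeddings delivers operator norm convergence to $0$. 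I expect this last upgrade---from a.e.\ kernel convergence to Lorentz operator norm convergence---to be the main technical obstacle.
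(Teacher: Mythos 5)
Your treatment of C1 and your key bilinear estimate (boundedness of the kernel $f_2(x)f_1(y)/|x-y|$ on $L^{3,2}$ and $L^{3/2,2}$ via O'Neil--H\"older, the duality $\frac1{|x|}\in L^{3,\infty}=(L^{3/2,1})^*$, and duality for the $L^{3/2,2}$ bound) match the paper's argument, which also approximates $V_1,V_2$ by bounded compactly supported functions and uses that the compactly supported piece has time-support bounded by the diameter of the support. For C2, however, you take a genuinely different route. The paper fixes $p\in(1,4/3]$, invokes the oscillatory-kernel lemma from Stein to get $\|\widehat T(\lambda)^N\|_{\B(X)}\lesssim\lambda^{-3(N-2)/p'}$, and concludes that $\partial_\rho(\chi(\rho)T(\rho))^N$ is uniformly bounded for $N>2+2p'/3$ (so $n\geq 5$), giving Lipschitz continuity in $t$. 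You instead observe that already $T^2(x,z,\cdot)$ is an absolutely continuous measure (the $y$-integration smears the convolved Dirac masses over the family of ellipsoids $|x-y|+|y-z|=t$), so translation continuity in $L^1_t$ holds pointwise in $(x,z)$. This is more elementary, avoids the oscillatory-integral input entirely, and achieves $n=2$; it is a legitimate and arguably cleaner alternative.

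The one genuine gap is the final upgrade, which you yourself flag. Setting $g_\epsilon(x,z)=\int_\R|T^2(x,z,t+\epsilon)-T^2(x,z,t)|\,dt$, your Hilbert--Schmidt/dominated-convergence step correctly gives $\|g_\epsilon\|_{\B(L^2)}\to 0$ for the truncated kernels, but ``Lorentz-on-bounded-domain embeddings'' cannot convert this into $\B(L^{3,2})$ or $\B(L^{3/2,2})$ convergence: the embeddings $L^{3,2}(B)\hookrightarrow L^2(B)\hookrightarrow L^{3/2,2}(B)$ let you factor $G_\epsilon=\chi_B G_\epsilon\chi_B$ through $L^2$ only into the \emph{larger} space $L^{3/2,2}$, not back into $L^{3,2}$, so no operator-norm bound on the target spaces follows. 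The correct repair is interpolation rather than embedding: for the bounded, compactly supported truncations, Schur's test gives $\sup_\epsilon\|G_\epsilon\|_{\B(L^p)}<\infty$ for all $1\leq p\leq\infty$, and real interpolation of the pairs $(L^2,L^p)$ (e.g.\ $L^{3,2}=(L^2,L^4)_{2/3,2}$ and $L^{3/2,2}=(L^1,L^2)_{2/3,2}$) together with $\|G_\epsilon\|_{\B(L^2)}\to 0$ yields $\|G_\epsilon\|_{\B(L^{3,2})\cap\B(L^{3/2,2})}\to 0$. With that substitution, and your $O(\eta)$ control of $T^2-(T')^2$ from the key estimate, the argument closes.
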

Here the powers of $T$ mean repeated convolution. We refer the reader to similar properties that appear in the proof of Theorem 5 in \cite{becgol}. 

\begin{proof}[Proof of Lemma \ref{lemma_22}]
Suppose $V_1$ and $V_2$ are bounded functions with compact support in $B(0, D)$.  It follows that for $R > 2D$ $\chi(t/R) T(t) = 0$, so in particular $\|\chi_{t \geq R}T\|_{\V_{L^{3/2, 2}} \cap \V_{L^{3, 2}}} \to 0$ as $R \to \infty$, and property C1 is preserved by taking the limit of $V_1$ and $V_2$ in~$L^{3, 2}$.

Next, fix $p \in (1, 4/3]$ and assume that $V_1$ and $V_2$ are bounded and of compact support.

Since $V_1$ and $V_2$ are bounded and of compact support, $T$ also has the local and distal properties
$$
\lim_{\epsilon \to 0} \Big\|\chi_{<\epsilon}(|x-y|) \frac{V_2(x) V_1(y)}{|x-y|}\Big\|_{\B(L^{3/2, 2}) \cap \B(L^{3, 2})} = 0
$$
and
$$
\lim_{R \to \infty} \Big\|\chi_{>R}(|x-y|) \frac{V_2(x) V_1(y)}{|x-y|}\Big\|_{\B(L^{3/2, 2}) \cap \B(L^{3, 2})} = 0.
$$
Combined with condition C1, this implies that for any $\epsilon>0$ there exists a cutoff function $\chi$ compactly supported in $(0, \infty)$ such that
$$
\|\chi(\rho) T(\rho) - T(\rho)\|_{\V_{L^{3/2, 2}} \cap \V_{L^{3, 2}}} < \epsilon.
$$
Thus, it suffices to show that condition C2 holds for $\chi(\rho) T(\rho)$.

The Fourier transform of $\chi(\rho) T(\rho)$ has the form
\be\lb{ft}
(\chi(\rho) T(\rho))^{\wedge}(\lambda) = V_2(x) \frac {e^{i\lambda|x-y|}}{4\pi|x-y|} \chi(|x-y|) V_1(y).
\ee
Such oscillating kernels have decay in the $L^p$ operator norm for $p>1$. By the Lemma of \cite{stein}, page 392, with $p'$ being the dual exponent $\frac 1 p + \frac 1 {p'} = 1$,
\be\lb{stein}
\|(\chi(\rho) T(\rho))^{\wedge}(\lambda) f\|_{L^p} \les \lambda^{-3/p'} \|f\|_{L^p}.
\ee
Taking into account the fact that $(\chi(\rho) T(\rho))^{\wedge}(\lambda)$ has a kernel bounded in absolute value by $\ds \frac {|V(x)|^{1/2}|V(y)|^{1/2}}{4\pi|x-y|}$ (where $|V|^{1/2}=V_1$ is bounded and has compact support by assumption), it follows that $(\chi(\rho) T(\rho))^{\wedge}(\lambda)$ is uniformly bounded in $\B(X, L^p)$, $\B(L^p, X)$, and $\B(L^p)$ for all $\lambda$, where $X$ is $L^{3/2, 2}$ or $L^{3, 2}$. Therefore, by also using (\ref{stein}) for the middle factors,
$$
\|\big((\chi(\rho) T(\rho))^{\wedge}(\lambda)\big)^N f\|_{X} \les \langle\lambda \rangle^{-3(N-2)/p'} \|f\|_{X}.
$$
For $N>2+2p'/3$, this shows that $\partial_{\rho} (\chi(\rho) T(\rho))^N$ are uniformly bounded operators in $\B(X)$, where $X$ is either $L^{3/2, 2}$ or $L^{3, 2}$. Since $(\chi(\rho) T(\rho))^N$ has compact support in $\rho$, this in turn implies C2.

For general $V \in L^{3/2, 1}$, choose a sequence of bounded compactly supported approximations for which C2 holds, as shown above. By a limiting process, we obtain that C2 also holds for $V$.
\end{proof}

\begin{lemma}\lb{lemma23}
Let $\widehat T(\lambda) = V_2 R_0((\lambda+i0)^2) V_1$. Assume that $V \in L^{3/2, 1}$ and let $\lambda_0 \ne 0$. Consider a cutoff function $\chi$. Then, for $\epsilon \ll 1$, $\chi(\frac{\lambda-\lambda_0} \epsilon) (I + \widehat T(\lambda)^{-1}) \in \W$. 
The same holds for $\lambda_0=0$ if $V$ is a generic potential.

Infinity has the same property: for $R \gg 1$, $(1-\chi(\lambda/R)) (I+\widehat T(\lambda))^{-1} \in~\W$.
\end{lemma}
\begin{proof}[Proof of Lemma \ref{lemma23}]
Note that $I+\widehat T(\lambda_0)$ is invertible in $\B(L^{3/2, 2})$ and in $\B(L^{3, 2})$ for all $\lambda_0 \ne 0$, the only issue being at zero.

Indeed, assume that $I+\widehat T(\lambda_0)$ is not invertible in $\B(L^{3/2, 2})$; then by Fredholm's alternative there exists a nonzero $f \in L^{3/2, 2}$ such that
$$
f=-V_2 R_0((\lambda_0+i0)^2) V_1 f.
$$
Let $V_1=V_1^1 + V_1^2$, $V_2=V_2^1+V_2^2$, where $V_1^1$ and $V_2^1$ have compact support and are bounded and $\|V_1^2\|_{L^{3, 2}}$, $\|V_2^2\|_{L^{3, 2}} \ll1$. Then
$$
f = -(I + V_2 R_0((\lambda_0+i0)^2) V_1^2 + V_2^2 R_0((\lambda_0+i0)^2) V_1^1)^{-1} V_2^1 R_0((\lambda_0+i0)^2) V_1^1 f,
$$
which implies that $f \in L^2$. Letting $g=R_0((\lambda_0+i0)^2) V_1 f$, we obtain a nonzero $L^{6, \infty}$ solution $g$ of the equation
$$
g=-R_0((\lambda_0+i0)^2) V g.
$$
However, this is impossible for $\lambda_0 \ne 0$ due to the results of Ionescu--Jerison \cite{ionjer} and Goldberg--Schlag \cite{golsch}.

When $\lambda_0=0$, $g$ is a zero energy eigenfunction or resonance for $H=-\Delta+V$, which cannot happen if $V$ is a generic potential.

Let $S_\epsilon(\lambda) = \chi(\frac{\lambda-\lambda_0} \epsilon)(\widehat T(\lambda)-\widehat T(\lambda_0))$. A simple argument based on condition C1 shows that $\lim_{\epsilon \to 0} \|S_\epsilon^\vee\|_{\V_{L^{3/2, 2}} \cap \V_{L^{3, 2}}}=0$. Then
$$\begin{aligned}
\chi(\frac{\lambda-\lambda_0}\epsilon) (I+\widehat T(\lambda))^{-1} &= \chi(\lambda/\epsilon) \big(I+\widehat T(\lambda_0) + \chi(\frac{\lambda-\lambda_0}{2\epsilon})(\widehat T(\lambda) - \widehat T(\lambda_0))\big)^{-1}\\
&=\chi(\frac{\lambda-\lambda_0} \epsilon) (I+\widehat T(\lambda_0))^{-1} \sum_{k=0}^\infty (-1)^k (S_{2\epsilon}(\lambda) (I+\widehat T(\lambda_0))^{-1})^k.
\end{aligned}$$
The Fourier transform of the series above converges for sufficiently small $\epsilon$, showing that $(\chi(\frac{\lambda-\lambda_0}\epsilon) (I + \widehat T(\lambda))^{-1})^\vee \in \V_{L^{3/2, 2}} \cap V_{L^{3, 2}}$.

Concerning the derivative,
$$
\chi(\frac{\lambda-\lambda_0}\epsilon) \partial_\lambda (I+\widehat T(\lambda))^{-1} = - \chi(\frac{\lambda-\lambda_0}\epsilon) (I+\widehat T(\lambda))^{-1} \partial_{\lambda} \widehat T(\lambda) \chi(\frac{\lambda-\lambda_0}{2\epsilon}) (I+\widehat T(\lambda))^{-1}.
$$
Here $(\chi(\frac{\lambda-\lambda_0}{2\epsilon}) (I + \widehat T(\lambda))^{-1})^\vee \in \V_{L^{3/2, 2}} \cap \V_{L^{3, 2}}$ and $(\partial_{\lambda} \widehat T(\lambda))^\vee \in \V_{L^{3/2, 2}, L^{3, 2}}$ since $\ds M((\partial_{\lambda} T(\lambda))^\vee) = \frac{|V_2(x)| \otimes |V_1(y)|}{4\pi}$. Then $(\chi(\frac {\lambda - \lambda_0}\epsilon) \partial_\lambda(I + \widehat T(\lambda))^{-1})^\vee \in \V_{L^{3/2, 2}, L^{3, 2}}$.

The term $\partial_\lambda \chi(\frac{\lambda-\lambda_0}\epsilon) (I+\widehat T(\lambda))^{-1}$ is handled as above. Thus $(\partial_\lambda (\chi(\frac {\lambda - \lambda_0}\epsilon) (I + \widehat T(\lambda))^{-1}))^\vee \in \V_{L^{3/2, 2}, L^{3, 2}}$.

At infinity, for any real number $L$ one can express $(1 - \chi(\lambda/R)) \widehat T(\lambda)$ as the Fourier transform of
\begin{equation*}
S_R(\rho) = \big(T - R\widecheck{\chi}(R\,\cdot\,) * T\big)(\rho) = 
\int_\R R \widecheck{\chi}(R\sigma) [T(\rho) - T(\rho-\sigma)]\,d\sigma
\end{equation*}
Thanks to condition C2, the norm of the right-hand side integral vanishes as $L \to \infty$.  This makes it possible to construct an inverse Fourier transform for
\begin{equation*}
(1 - \chi(\lambda/R))\big(I + \widehat T(\lambda)\big)^{-1} 
= (1 - \chi(\lambda/R)) 
 \sum_{k=0}^\infty (-1)^k \Big(\big(1 - \chi(2\lambda/R))\widehat T(\lambda)\Big)^k
\end{equation*}
via this power series expansion, which converges for sufficiently large $R$.

If only $T^N$ satisfies C2 then one constructs an
inverse Fourier transform for $(1-\chi(\lambda/R))
 (I - (-\widehat T)^N(\lambda))^{-1}$ in this manner and observes that
\begin{equation*}
(1 - \chi(\lambda/R))\big(I + \widehat T(\lambda)\big)^{-1}
  = (1 - \chi(\lambda/R))\big(I - (-\widehat T(\lambda))^N \big)^{-1}
\sum_{k=0}^{N-1}(-1)^k \widehat T^k(\lambda).
\end{equation*}

Finally, concerning the derivative in a neighborhood of infinity, we note that
$$
(1-\chi(\lambda/R)) \partial_\lambda (I + \widehat T(\lambda))^{-1} = -(1-\chi(\lambda/R)) (I + \widehat T(\lambda))^{-1} \partial_\lambda \widehat T(\lambda) (1-\chi(2\lambda/R)) (I + \widehat T(\lambda))^{-1}.
$$
Here $((1-\chi(2\lambda/R)) (I + \widehat T(\lambda))^{-1})^\vee \in \V_{L^{3/2, 2}} \cap \V_{L^{3, 2}}$ and $(\partial_\lambda \widehat T(\lambda))^\vee \in \V_{L^{3/2, 2}, L^{3, 2}}$. Therefore $((1-\chi(\lambda/R)) \partial_\lambda (I + \widehat T(\lambda))^{-1})^\vee \in \V_{L^{3/2, 2}, L^{3, 2}}$ and furthermore $(\partial_\lambda((1-\chi(\lambda/R)) (I + \widehat T(\lambda))^{-1}))^\vee \in \V_{L^{3/2, 2}, L^{3, 2}}$.
\end{proof}

In the case when $H$ is generic, we can cover the whole spectrum $[0, \infty)$ by open neighborhoods of regular points, plus an open neighborhood of infinity, and choose a subordinate partition of unity. We retrieve a form of Theorem 2 of \cite{becgol}:
\begin{theorem}
\label{thm:dispersive}
Let $V \in L^{3/2, 1}$ be a real-valued potential for which the
Schr\"odinger operator $H = -\Delta + V$ has no resonances or eigenvalues
at zero energy.  Then
\begin{equation} \label{eq:dispersive}
\big\|e^{-itH} P_c f\big\|_{\infty} 
  \les |t|^{-\frac32}\|f\|_1. 
\end{equation}
\end{theorem}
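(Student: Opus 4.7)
The plan is to combine Stone's formula with the symmetric resolvent identity
\[
R_V(\lambda^2) = R_0(\lambda^2) - R_0(\lambda^2) V_1 \widehat T(\lambda)^{-1} V_2 R_0(\lambda^2),
\]
where $\widehat T(\lambda) = I + V_2 R_0(\lambda^2) V_1$, and then to convert the resulting $\lambda$-integral into a space-time convolution using the Wiener-algebra structure of Lemma \ref{lemma_22} and Lemma \ref{lemma23}. Writing
\[
e^{-itH} P_c = \frac{1}{\pi i} \int_{-\infty}^{\infty} e^{-it\lambda^2}\,\lambda\, R_V(\lambda^2+i0)\,d\lambda,
\]
the $R_0$ contribution is exactly the free evolution $e^{it\Delta}$, whose kernel already obeys \eqref{eq:dispersive}. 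It therefore suffices to bound the correction
\[
\mathcal R(t) = -\frac{1}{\pi i}\int_{-\infty}^{\infty} e^{-it\lambda^2}\,\lambda\, R_0(\lambda^2) V_1 \widehat T(\lambda)^{-1} V_2 R_0(\lambda^2)\,d\lambda
\]
from $L^1$ into $L^\infty$ with the same rate.

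Because $H$ has no eigenvalue or resonance in $[0,\infty)$, the operator $I + V_2 R_0(0) V_1$ is invertible on $L^{3/2, 2}\cap L^{3, 2}$, so the argument of Lemma \ref{lemma23} applies with $\lambda_0 = 0$ as well: zero is a regular point. Together with Lemma \ref{lemma23} at every $\lambda_0 \neq 0$ and at infinity, $[0,\infty)$ (and hence all of $\R$, by the symmetry $\lambda \leftrightarrow -\lambda$) admits a finite cover by open sets on each of which $\widehat T^{-1}$ has an inverse Fourier transform in $\V_{L^{3/2,2}}\cap \V_{L^{3,2}}$, with $\lambda$-derivative in $\V_{L^{3/2,2},L^{3,2}}$. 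Choosing a subordinate partition of unity $\{\chi_k(\lambda)\}$ gives a finite decomposition $\mathcal R(t) = \sum_k \mathcal R_k(t)$ on which one may work locally.

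On each piece, the inverse Fourier transform of $\chi_k \widehat T^{-1}$ is composed with the outer factors $R_0(\lambda^2) V_1$ and $V_2 R_0(\lambda^2)$, which Lemma \ref{fourier} places in $\V_{L^1, L^{3,\infty}}$ and $\V_{L^{3/2,1}, L^\infty}$ respectively. Applying the algebroid property of the $\V_{\cdot,\cdot}$ spaces produces a kernel $K_k(x,y,s)$ that is a finite measure in $s$ with $M(K_k)$ bounded from $L^1$ to $L^\infty$. The remaining factor $e^{-it\lambda^2}\lambda$ is to be fused with the two flanking copies of $R_0(\lambda^2)$ into two free Schr\"odinger propagators, so that each $\mathcal R_k(t)$ is rewritten in the form
\[
\mathcal R_k(t) f(x) = \int_\R \bigl(e^{i(t-s)\Delta} K_k(\cdot,\cdot,s)\, e^{it\Delta}f\bigr)(x)\,ds
\]
(or a multi-parameter analogue arising from the three-way composition). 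The bound $\|e^{i\tau \Delta}\|_{L^1\to L^\infty} \lesssim |\tau|^{-3/2}$ applied to one of the propagators, combined with uniform boundedness of $K_k(\cdot,\cdot, s)$ and absolute integrability of $s \mapsto M(K_k)(\,\cdot\,,\,\cdot\,)$, yields the pointwise $|t|^{-3/2}$ estimate on each $\mathcal R_k$.

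The main obstacle lies in extracting the full $|t|^{-3/2}$ rate: a bare Wiener-algebra bound of $\int e^{-it\lambda^2} \widehat K(\lambda)\,d\lambda$ only yields $|t|^{-1/2}$, since that is the decay of the Fourier integral of the quadratic chirp $e^{-it\lambda^2}$. The remaining $|t|^{-1}$ must be recovered by reabsorbing the two resolvents $R_0(\lambda^2)$ flanking $\widehat T(\lambda)^{-1}$ into two full propagators, which is where the derivative estimates of Lemma \ref{lemma23} and the Fourier identities of Lemma \ref{fourier} play a decisive role (the factor $\lambda$ from the Stone formula and the $\partial_\lambda$ bounds combining to furnish the extra half-power). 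Summing over the finitely many pieces of the partition and adding the free contribution $e^{it\Delta}$ completes the proof of \eqref{eq:dispersive}.
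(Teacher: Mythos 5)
Your architecture coincides with the paper's: the symmetric resolvent identity, a finite cover of $[0,\infty)$ by the neighborhoods of regular points and of infinity furnished by Lemma \ref{lemma23} (with $\lambda_0=0$ now admissible, since the absence of a threshold resonance or eigenvalue makes $I+V_2R_0(0)V_1$ invertible), a subordinate partition of unity, and a Wiener-algebra bound on each piece. The paper's proof of this theorem is exactly that covering argument followed by an appeal to Theorem 2 of \cite{becgol}, with the quantitative mechanism displayed later in the proof of Proposition \ref{prop28}.

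The step of yours that fails as written is the displayed factorization $\mathcal R_k(t)f=\int_\R e^{i(t-s)\Delta}K_k(\cdot,\cdot,s)\,e^{it\Delta}f\dd s$. The variable $s$ dual to $\lambda$ in these Wiener spaces is a length, not a time --- recall $R_0^\vee(t)(x,y)=(4\pi t)^{-1}\delta_{|x-y|}(t)$ --- so it enters the phase linearly as $e^{-i\lambda s}$ and cannot be shifted into the time slot of a propagator; and even formally, a composition of two free propagators through a bounded kernel cannot be bounded $L^1\to L^\infty$ by applying the dispersive estimate to ``one of the propagators,'' since the remaining factor would then need to be bounded on $L^1$ or on $L^\infty$, which $e^{i\tau\Delta}$ is not. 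The mechanism that actually closes the argument is to write $2\lambda e^{-it\lambda^2}=\tfrac it\partial_\lambda e^{-it\lambda^2}$ and integrate by parts, then apply the uniform Fresnel bound $\big|\int_\R e^{-it\lambda^2-i\lambda s}\dd\lambda\big|\les t^{-1/2}$ under the measure obtained by Fourier-transforming $\partial_\lambda\big(R_0V_1\widehat T^{-1}V_2R_0\big)$; the entire point of the derivative hypotheses in Lemma \ref{lemma23} and of the identity $M(\partial_\lambda R_0)=(4\pi)^{-1}1\otimes1$ in Lemma \ref{fourier} is that this derivative lies in $\V_{L^1,L^\infty}$, and that membership is where the extra factor of $t^{-1}$ comes from. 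Equivalently, one can use the exact computation $\int_\R e^{-it\lambda^2+i\lambda b}\lambda\dd\lambda=e^{ib^2/4t}\,\tfrac b{2t}\sqrt{\pi/(it)}$ with $b=|x-z_1|+|z_2-y|-s$, where the unbounded factor $b/(|x-z_1|\,|z_2-y|)$ is precisely what the $\partial_\lambda$ bounds absorb. Your closing paragraph gestures at this resolution, but the proof needs one of these two computations in place of the propagator composition.
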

In the context of the wave equation, again if the Hamiltonian $H$ is generic, we retrieve the results of \cite{becgol2}.

\begin{proof}[Proof of Theorem \ref{thm:dispersive}]
Consider a sufficiently large $R$ such that $(1-\chi(\lambda/R))(I+\widehat T(\lambda))^{-1} \in \W$, by Lemma \ref{lemma23}. Also by Lemma \ref{lemma23}, for every $\lambda_0 \in [-4R, 4R]$ (including zero, since $V$ is a generic potential) there exists $\epsilon(\lambda_0)>0$ such that $\chi(\frac{\lambda-\lambda_0}{\epsilon(\lambda_0)}) (I+\widehat T(\lambda))^{-1} \in \W$.

Since $[-4R, 4R]$ is a compact set, there exists a finite covering $[-4R, 4R] \subset \bigcup_{k=1}^N (\lambda_k-\epsilon(\lambda_k), \lambda_k+\epsilon(\lambda_k))$. Then we construct a finite partition of unity on $\R$ by smooth functions $1=\sum_{k=1}^N \chi_k(\lambda) + \chi_\infty(\lambda)$, where $\supp \chi_\infty \subset \R \setminus (-2R, 2R)$ and $\supp \chi_k \subset [\lambda_k-\epsilon(\lambda_k), \lambda_k+\epsilon(\lambda_k)]$. By our construction, for each $1 \leq k \leq N$ and for $k=\infty$, $\chi_k(\lambda) (I+\widehat T(\lambda))^{-1} \in \W$, so summing up we obtain that $(I+\widehat T(\lambda))^{-1} \in \W$.

By spectral calculus, we express the perturbed evolution as
\be\lb{explicit_computation}\begin{aligned}
&e^{itH} P_c f = \frac 1 {2\pi i} \int_0^\infty e^{it\lambda} (R_V(\lambda+i0)-R_V(\lambda-i0)) f \dd \lambda \\
&= \frac 1 {\pi i} \int_{-\infty}^\infty e^{it\lambda^2} R_V((\lambda+i0)^2) f \lambda \dd \lambda \\
&= \frac 1 {\pi i} \int_{-\infty}^\infty e^{it\lambda^2} (R_0((\lambda+i0)^2) - R_0((\lambda+i0)^2) V_1 (I+\widehat T(\lambda))^{-1} V_2 R_0((\lambda+i0)^2)) f \lambda \dd \lambda \\
&= \frac 1 {2 \pi t} \int_{-\infty}^\infty e^{it\lambda^2} \partial_\lambda (R_0((\lambda+i0)^2) - R_0((\lambda+i0)^2) V_1 (I+\widehat T(\lambda))^{-1} V_2 R_0((\lambda+i0)^2)) f \dd \lambda \\
&= \frac C {t^{3/2}} \int_{-\infty}^\infty e^{i\frac{\rho^2}{4t}} (\partial_\lambda (R_0((\lambda+i0)^2) - R_0((\lambda+i0)^2) V_1 (I+\widehat T(\lambda))^{-1} V_2 R_0((\lambda+i0)^2)))^\vee(\rho) f \dd \rho.
\end{aligned}\ee

Since $(I+\widehat T(\lambda))^{-1} \in \W$, it follows that $(\partial_\lambda (I+\widehat T(\lambda))^{-1})^\vee \in \V_{L^{3/2, 2}, L^{3, 2}}$. Taking into account that $R_0((\lambda+i0)^2) V_1 \in \V_{L^1, L^{3/2, 2}}$ and $V_2 R_0((\lambda+i0)^2) \in \V_{L^{3, 2}, L^\infty}$, we obtain that
$$
R_0((\lambda+i0)^2) V_1 (I+\widehat T(\lambda))^{-1} V_2 R_0((\lambda+i0)^2) \in \V_{L^1, L^\infty}.
$$
By definition, this ensures a bound of $|t|^{-3/2}$ for this expression's contribution to (\ref{explicit_computation}). The other terms are handled similarly.
\end{proof}

We next consider the effect of singularities at zero.

\subsection{Exceptional Hamiltonians of the first kind}
Let
$$
Q = - \frac 1 {2\pi i} \int_{|z+1|=\delta} (V_2 R_0(0) V_1 - zI)^{-1} \dd z
$$
and $\ov Q = 1-Q$. Assuming that $H=-\Delta+V$ has only a resonance $\phi$ at zero, then (recalling that $-\langle \phi, V \phi \rangle = 1$) by the analytic Fredholm theorem
$$
Q = -V_2 \phi \otimes V_1 \phi.
$$

The resonance $\phi \in \mc M$ satisfies the equation $\phi = - R_0(0) V \phi$. Since $\phi \in L^{3, \infty} \cap L^\infty$, $Q$ is bounded on $L^{3/2, 2}$ and on $L^{3, 2}$, so the constant family of operators $Q$ is in $\W$. Moreover, $Q$ is in $\B(L^{3/2, 2}, L^{3, 2})$ and in $\B(L^{3, 2}, L^{3/2, 2})$.

Note that, since
$$
e^{i\lambda|x-y|}-1 \les \min(1, \lambda |x-y|) \implies e^{i\lambda|x-y|}-1 \les \lambda^{\delta} |x-y|^\delta,
$$
one has
\be\lb{holder}
V_2(x) \Big(\frac {e^{i\lambda|x-y|}}{|x-y|} - \frac 1 {|x-y|}\Big) V_1(y) \les |V_2(x)| \lambda |V_1(y)|.
\ee
Thus,
when $V \in \langle x \rangle^{-1} L^{3/2, 1}$, $I+\widehat T(\lambda) = I + V_2 R_0((\lambda+i0)^2) V_1$ is Lipschitz continuous in $\B(L^2)$. This implies that, more generally, when $V \in L^{3/2, 1}$ $\widehat T(\lambda)$ is continuous in $\B(L^2)$ (the proof is by approximation).

In a similar manner, by approximating $V \in L^{3/2, 1}$ with $\langle x \rangle^{-2} L^{3/2, 1}$ potentials, we obtain that $\widehat T(\lambda)$ is continuous in $\B(L^{3/2, 2}) \cap \B(L^{3, 2})$.

Let
$$
K = (I + V_2 R_0(0) V_1 + Q)^{-1} \ov Q.
$$
Then $K$ is the inverse of $\ov Q (I + \widehat T(0)) \ov Q = \ov Q (I + V_2 R_0(0) V_1) \ov Q$ in $\B(\ov Q L^{3/2, 2} \cap \ov Q L^{3, 2})$, in the sense that
\be\lb{inverse}
K \ov Q (I + V_2 R_0(0)V_1) \ov Q = \ov Q (I+V_2R_0(0)V_1) \ov Q K = \ov Q.
\ee
By continuity $\ov Q (I+V_2 R_0((\lambda+i0)^2) V_1) \ov Q$ is also invertible for $|\lambda| \ll 1$.

The following lemma (Lemma 4.7 from Yajima \cite{yajima_disp}, also known as the Feshbach lemma) is extremely useful in studying the singularity at zero. 
\begin{lemma}\lb{lemma_invers} Let $X = X_0+X_1$ be a direct sum decomposition of a vector space $X$. Suppose that a linear operator $L \in \B(X)$ is written in the form 
$$
L = \bpm
L_{00}&L_{01}\\
L_{10}&L_{11}
\epm
$$
with respect to this decomposition and that $L^{-1}_{00}$ exists. Set 
$C = L_{11} - L_{10} L_{00}^{-1} L_{01}$. Then, $L^{-1}$ exists if and only if $C^{-1}$ exists. In this case
\be\lb{invers}
L^{-1}= \bpm L_{00}^{-1} + L_{00}^{-1} L_{01} C^{-1} L_{10} L_{00}^{-1}&-L_{00}^{-1} L_{01} C^{-1} \\ 
-C^{-1} L_{10} L_{00}^{-1} & C^{-1}
\epm.
\ee
\end{lemma}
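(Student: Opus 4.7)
The plan is to prove this via the block $LDU$ factorization (block Gaussian elimination), which reduces the $2\times 2$ block invertibility question to an invertibility question for the diagonal blocks.

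First I would write down the factorization
$$
L = \begin{pmatrix} I & 0 \\ L_{10} L_{00}^{-1} & I \end{pmatrix}
\begin{pmatrix} L_{00} & 0 \\ 0 & C \end{pmatrix}
\begin{pmatrix} I & L_{00}^{-1} L_{01} \\ 0 & I \end{pmatrix},
$$
which is valid because $L_{00}^{-1}$ exists and can be checked by direct multiplication using $C = L_{11} - L_{10} L_{00}^{-1} L_{01}$. The two outer factors are unipotent triangular, hence invertible on $X = X_0 \oplus X_1$ with explicit inverses obtained by flipping the sign of the off-diagonal block. Therefore $L$ is invertible on $X$ if and only if the block-diagonal middle factor is, and since $L_{00}^{-1}$ exists by hypothesis, this happens precisely when $C^{-1}$ exists in $\B(X_1)$.

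Second, assuming $C^{-1}$ exists, I would compute $L^{-1}$ by inverting each of the three factors and multiplying in the reverse order:
$$
L^{-1} = \begin{pmatrix} I & -L_{00}^{-1} L_{01} \\ 0 & I \end{pmatrix}
\begin{pmatrix} L_{00}^{-1} & 0 \\ 0 & C^{-1} \end{pmatrix}
\begin{pmatrix} I & 0 \\ -L_{10} L_{00}^{-1} & I \end{pmatrix}.
$$
Carrying out this $2\times 2$ block product yields exactly formula (\ref{invers}).

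Alternatively, if one prefers to avoid the LDU derivation, the formula (\ref{invers}) can simply be verified a posteriori: multiply the right-hand side of (\ref{invers}) by $L$ on the right and on the left and check, block by block, that both products equal $\mathrm{diag}(I_{X_0}, I_{X_1})$, using only the hypothesis that $L_{00}^{-1}$ and $C^{-1}$ exist as bounded operators; conversely, if $L^{-1}$ exists, read off $C^{-1}$ as the $(1,1)$-block of the inverse to see that $C$ must be invertible. There is no genuine analytic obstacle here — the argument is purely algebraic bookkeeping in a direct sum decomposition — so the only care needed is to make sure the compositions $L_{00}^{-1} L_{01}$, $L_{10} L_{00}^{-1}$, and $L_{10} L_{00}^{-1} L_{01}$ are interpreted as operators between the correct summands $X_0$ and $X_1$ so that all the block-matrix multiplications are well-defined.
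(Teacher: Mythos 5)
Your proof is correct. The paper itself gives no proof of this lemma --- it is quoted verbatim as Lemma 4.7 of Yajima \cite{yajima_disp} and used as a black box --- so there is nothing to compare against except the standard argument, which is exactly what you give: the block $LDU$ (Schur complement) factorization, whose outer unipotent factors are invertible, reduces invertibility of $L$ to invertibility of $\mathrm{diag}(L_{00},C)$, hence (given $L_{00}^{-1}$) to invertibility of $C$, and multiplying the three inverse factors in reverse order reproduces formula (\ref{invers}). The block products check out. One tiny caveat: in your fallback argument, the cleaner way to get the converse is from the factorization itself (as $\mathrm{diag}(L_{00},C)$ equals $L$ conjugated by invertible triangular factors, so it is invertible whenever $L$ is), rather than ``reading off'' the corner block of $L^{-1}$, which presupposes knowing which block of the inverse equals $C^{-1}$; but your primary argument already covers this, so the lemma is fully proved.
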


By definition, an exceptional point $\lambda \in \C$ is one where $I + V_2 R_0(\lambda) V_1$ is not $L^2$-invertible.

\begin{lemma}\lb{lemma24} Assume that $V \in \langle x \rangle^{-2} L^{3/2, 1} \subset \langle x \rangle^{-1} L^1 \cap L^{3/2, 1}$ and that $H=-\Delta+V$ is exceptional of the first type, with a resonance $\phi$ at zero.
Let $\chi$ be a fixed cutoff function. Then for some $\epsilon > 0$
$$
\chi(\lambda/\epsilon)(I + \widehat T(\lambda))^{-1} 
= L(\lambda) - \lambda^{-1} \chi(\lambda/\epsilon) \frac{4\pi i}{|\langle V, \phi\rangle|^2} V_2 \phi \otimes V_1 \phi,
$$
where $L \in \W$.

Moreover, zero is an isolated exceptional point, so $H=-\Delta+V$ has finitely many negative eigenvalues.

\end{lemma}
The computations in the proof of this lemma parallel those in Section 4.3 of \cite{yajima_disp}. The main difference is using $\widehat L^1$-related spaces instead of H\"{o}lder spaces.
\begin{proof}[Proof of Lemma \ref{lemma24}]
We apply Lemma \ref{lemma_invers} to
$$\begin{aligned}
I + \widehat T(\lambda) &:= 
\bpm \ov Q (I +\widehat T(\lambda)) \ov Q & \ov Q\widehat T(\lambda)Q \\ Q\widehat T(\lambda)\ov Q & Q (I+\widehat T(\lambda)) Q \epm = \bpm T_{00}(\lambda) & T_{01}(\lambda) \\ T_{10} (\lambda) & T_{11} (\lambda)\epm.
\end{aligned}$$
Note that $T_{00}(\lambda):=\ov Q(I+V_2R_0((\lambda+i0)^2)V_1) \ov Q$ is invertible in $\B(\ov Q L^{3/2, 2}) \cap \B(\ov Q L^{3, 2})$ for $|\lambda|\ll 1$, because
$$
T_{00}(0) = \ov Q(I + \widehat T(0)) \ov Q = \ov Q(I + V_2R_0(0)V_1)\ov Q
$$
is invertible on $\ov Q L^{3/2, 2}$ and on $\ov Q L^{3, 2}$ of inverse $K$, see (\ref{inverse}), and $T_{00}(\lambda)$ is continuous in the norm of $\B(\ov Q L^{3/2, 2}) \cap \B(\ov Q L^{3, 2})$, see (\ref{holder}) above.


Furthermore, start from $(R_{0}((\lambda+i0)^2))^\wedge \in \V_{L^{3/2, 1}, L^\infty} \cap \V_{L^1, L^{3, \infty}}$ and $(\partial_{\lambda} R_{0}((\lambda+i0)^2))^\wedge \in \V_{L^1, L^\infty}$. We know that $|V|^{1/2} \in \B(L^{3/2, 2}, L^1) \cap \B(L^{\infty}, L^{3, 2}) \cap \B(L^{3, \infty}, L^{3/2, 2}) \cap \B(L^{3, 2}, L^{3/2, 1})$. Thus $V_2 R_0((\lambda+i0)^2) V_1 \in \W$ and $\ov Q$ preserves that. Then $T_{00}(\lambda) \in \W$ as well.

Next, since $T_{00}(0)$ is invertible, for small $\epsilon$ $\chi(\lambda/\epsilon) T_{00}^{-1}(\lambda) \in \W$. The proof is as follows: let $S_\epsilon(\lambda) := \chi(\lambda/\epsilon)\ov Q(\widehat T(\lambda) - \widehat T(0)) \ov Q$. A simple argument based on condition C1 shows that $\lim_{\epsilon \to 0} \|S_\epsilon^\vee\|_{\V_{L^{3/2, 2}}\cap \V_{L^{3, 2}}}=0$. Then
$$\begin{aligned}
\chi(\lambda/\epsilon) T_{00}^{-1}(\lambda) &= \chi(\lambda/\epsilon) \big(T_{00}(0) + \chi(\lambda/(2\epsilon)) \ov Q (\widehat T(\lambda) - \widehat T(0)) \ov Q\big)^{-1}\\
&=\chi(\lambda/\epsilon) T_{00}^{-1}(0) \sum_{k=0}^\infty (-1)^k (S_{2\epsilon}(\lambda) T_{00}^{-1}(0))^k.
\end{aligned}$$
The series above converges for sufficiently small $\epsilon$, showing that $\chi(\lambda/\epsilon) T_{00}^{-1}(\lambda) \in \V_{L^{3/2, 2}} \cap \V_{L^{3, 2}}$.

Concerning the derivative,
$$
\chi(\lambda/\epsilon) \partial_\lambda T_{00}^{-1}(\lambda) = - \chi(\lambda/\epsilon) T_{00}^{-1}(\lambda) \partial_{\lambda} T_{00}(\lambda) \chi(\lambda/(2\epsilon)) T_{00}^{-1}(\lambda).
$$
In this expression $(\chi(\lambda/(2\epsilon)) T_{00}^{-1}(\lambda))^\vee \in \V_{L^{3/2, 2}} \cap \V_{L^{3, 2}}$ and $(\partial_{\lambda} T_{00}(\lambda))^\vee \in \V_{L^{3/2, 2}, L^{3, 2}}$. Thus
$(\chi(\lambda/\epsilon) \partial_\lambda T_{00}^{-1}(\lambda))^\vee \in \V_{L^{3/2, 2}, L^{3, 2}}$.

This computation shows that $\chi(\lambda/\epsilon) T_{00}^{-1}(\lambda) \in \W$.

Let
$$\begin{aligned}
J (\lambda) &:= \frac{\widehat T(\lambda) - (V_2 R_0(0) V_1 + i \lambda (4\pi)^{-1} V_2 \otimes V_1 )}{\lambda^2} \\
&= \frac{V_2 R_0((\lambda+i0)^2) V_1 - V_2 R_0(0) V_1 - i \lambda (4\pi)^{-1} V_2 \otimes V_1}{\lambda^2}.
\end{aligned}$$

Then (recall that $Q=-V_2 \phi \otimes V_1 \phi$)
\be\lb{calcul}\begin{aligned}
T_{11}(\lambda) &= Q (I+\widehat T(\lambda)) Q = Q (I + V_2 R_0((\lambda+i0)^2) V_1) Q\\
&=Q (V_2 R_0((\lambda+i0)^2) V_1 - V_2 R_0(0) V_1) Q\\
&=V_2 \phi \otimes V\phi(R_0((\lambda+i0)^2) - R_0(0)) V\phi \otimes V_1\phi \\
&= \Big(\lambda \frac{|\langle V, \phi \rangle|^2}{4i\pi} - \lambda^2 \langle V_1 \phi, J (\lambda) V_2 \phi \rangle\Big) Q \\
&=(\lambda a^{-1} - \lambda^2 \langle V_1 \phi, J(\lambda) V_2 \phi) \rangle) Q\\
&:= \lambda c_0(\lambda) Q.
\end{aligned}\ee
Note that $c_0(0) = a^{-1} \ne 0$. Recall that $\ds a=\frac {4i\pi}{|\langle V, \phi\rangle|^2}$.

By the third line of (\ref{calcul}), $c_0(\lambda) \in \widehat L^1$ if
$$
\int_{\R^3} \int_{\R^3} V(x)\phi(x) V(y) \phi(y) \Big\|\frac{e^{i\lambda|x-y|} - 1}{\lambda |x-y|}\Big\|_{\widehat L^1_{\lambda}} \dd x \dd y < \infty.
$$
For every $x$ and $y$, by Lemma \ref{fourier}
$$
\Big\|\frac{e^{i\lambda|x-y|} - 1}{\lambda |x-y|}\Big\|_{\widehat L^1_{\lambda}}=\Big\|\frac {\chi_{[0, |x-y|]}(t)}{|x-y|}\Big\|_{L^1_t}=1,
$$
so it is enough to assume that $V \phi \in L^1$, i.e.\ that $V \in L^{3/2, 1}$, to prove that $c_0(\lambda) \in \widehat L^1$.

In order for $\partial_\lambda c_0(\lambda)$ to be in $\widehat L^1$, it suffices that
$$
\int_{\R^3} \int_{\R^3} V(x)\phi(x) V(y) \phi(y) \Big\|\partial_\lambda \frac{e^{i\lambda|x-y|} - 1}{\lambda |x-y|}\Big\|_{\widehat L^1_{\lambda}} \dd x \dd y < \infty.
$$
For every $x$ and $y$, by Lemma \ref{fourier},
$$
\Big\|\partial_\lambda\frac{e^{i\lambda|x-y|} - 1}{\lambda |x-y|}\Big\|_{\widehat L^1_{\lambda}}=\Big\|\frac {t\chi_{[0, |x-y|]}(t)}{|x-y|}\Big\|_{L^1_t}=\frac{|x-y|} 2,
$$
so $\partial_{\lambda} c_0(\lambda) \in \widehat L^1$ when $V\phi \in \langle x \rangle^{-1} L^1$, i.e.\ when $V \in L^1$.

Regarding $J(\lambda)$, when $V \in L^1$ then
\be\lb{j}
\langle J(\lambda) V_2 \phi, V_1 \phi \rangle = \Big\langle \frac{R_0((\lambda+i0)^2) - R_0(0) - i \lambda (4\pi)^{-1} 1 \otimes 1}{\lambda^2} V\phi, V \phi \Big\rangle \in \widehat L^1_\lambda.
\ee
Moreover, when $V \in  \langle x \rangle^{-1} L^1$, $\langle \partial_\lambda J(\lambda) V_2 \phi, V_1 \phi \rangle \in~\widehat L^1_\lambda$.

Furthermore, considering the fact that $\phi+R_0(0)V\phi=0$, let us define
$$\begin{aligned}
\lambda\tilde\psi(\lambda) &:= (I+\widehat T(\lambda))V_2 \phi = (V_2 R_0((\lambda+i0)^2)V-V_2 R_0(0)V)\phi\\
&= \lambda\Big(i \frac{V_2 \otimes V_1}{4\pi} + \lambda J (\lambda)\Big) V_2\phi
\end{aligned}$$
and
$$\begin{aligned}
\lambda \tilde \psi^*(\lambda) &:= (I+\widehat T(\lambda)^*) V_1 \phi = (V_1 R_0^*((\lambda+i0)^2) V - V_1 R_0(0)V) \phi \\
&= \lambda \Big(-i \frac{V_1 \otimes V_2}{4\pi} + \lambda J^*(\lambda)\Big) V_1 \phi.
\end{aligned}$$

Note that $\ds M(J(\lambda)^\vee) = |V_2(x)| \frac {|x-y|}{8\pi} |V_1(y)|$ is a bounded operator from $L^{3/2, 2}$ to $L^{3, 2}$, assuming that $V \in \langle x \rangle^{-2} L^{3/2, 1}$. Thus $J(\lambda)^\vee \in \V_{L^{3/2, 2}, L^{3, 2}}$ and same goes for $\lambda \partial_{\lambda} J(\lambda)$.

Moreover, $\ds M((\lambda J(\lambda))^\vee) = \frac {|V_2| \otimes |V_1|}{2\pi}$. Thus $(\lambda J(\lambda))^\vee \in \V_{L^2}$ for $V \in L^1$ and $(\lambda J(\lambda))^\vee \in \V_{L^{3/2, 2}} \cap \V_{L^{3, 2}}$ when $V \in \langle x \rangle^{-2} L^{3/2, 1}$. Further note that $(\partial_\lambda (\lambda J(\lambda)))^\vee = (J(\lambda) + \lambda \partial_\lambda J(\lambda))^\vee \in \V_{L^{3/2, 2}, L^{3, 2}}$. It follows that $\lambda J(\lambda) \in \W$.

Then (recalling that $Q = - V_2\phi \otimes V_1 \phi$)
$$\begin{aligned}
T_{01}(\lambda)  &:=\ov Q \widehat T(\lambda) Q = \ov Q (I+\widehat T(\lambda)) Q = (I+\widehat T(\lambda)) Q - Q (I+\widehat T(\lambda)) Q\\
&= -\lambda \tilde \psi(\lambda) \otimes V_1\phi - \lambda c_0(\lambda) Q\\
&= -\lambda (\tilde \psi (\lambda) + c_0(\lambda)V_2\phi) \otimes V_1 \phi.
\end{aligned}$$

Likewise,
$$\begin{aligned}
T_{10}(\lambda) &= -\lambda V_2\phi \otimes (\tilde \psi^*(\lambda) + \ov{c_0(\lambda)}V_1 \phi).
\end{aligned}$$
By our above computations it follows that $T_{01}(\lambda)  = \lambda E_1(\lambda)$ and $T_{10}(\lambda) = \lambda E_2(\lambda)$ with $E_1$, $E_2 \in \W$.

Then $-T_{10} (\lambda) T_{00}^{-1}(\lambda) T_{01} (\lambda) = \lambda^2 c_1(\lambda) Q$, where
\be\lb{c1}\begin{aligned}
c_1(\lambda) &:= -\big\langle \tilde \psi^*(\lambda) + \ov{c_0(\lambda)}V_1 \phi , T_{00}^{-1}(\lambda)( \tilde \psi (\lambda) + c_0 (\lambda)V_2\phi) \big\rangle \\
&= -\Big\langle \Big(-i \frac{V_1 \otimes V_2}{4\pi} + \lambda J^*(\lambda)\Big) V_1 \phi + \ov{c_0(\lambda)}V_1\phi,\\
&T_{00}^{-1}(\lambda)\Big(\Big(i \frac{V_2 \otimes V_1}{4\pi} + \lambda J (\lambda)\Big) V_2\phi + c_0(\lambda) V_2\phi\Big)\Big\rangle.
\end{aligned}\ee
For example, one of the terms in (\ref{c1}) has the form
\be\lb{term}
\langle \lambda J^*(\lambda) V_1 \phi, T_{00}^{-1}(\lambda) \lambda J(\lambda) V_2 \phi \rangle.
\ee

Since $\lambda J(\lambda) \in \W$ and $\chi(\lambda/\epsilon) T_{00}^{-1}(\lambda) \in \W$ and since $V_1 \phi$, $V_2\phi \in L^{3/2, 2} \cap L^{3, 2}$, it immediately follows that $\chi(\lambda/\epsilon)(\ref{term})$ is in $\widehat L^1$ and its derivative is also in~$\widehat L^1$.


We then recognize from formula (\ref{c1}) that, for a cutoff function $\chi$, $\chi(\lambda/\epsilon) c_1(\lambda) \in \widehat L^1$ 
and $\chi(\lambda/\epsilon) \partial_\lambda c_1(\lambda) \in \widehat L^1$ when $V \in \langle x \rangle^{-2} L^{3/2, 1}$. 

Let
$$
C(\lambda) := T_{11}(\lambda) - T_{10}(\lambda)T_{00}^{-1}(\lambda)T_{01} (\lambda).
$$
Then
$$\begin{aligned}
C(\lambda) 
&= (\lambda a^{-1} - \lambda^2 \langle V_1\phi, J(\lambda) V_2\phi \rangle + \lambda^2 c_1(\lambda)) Q:= \lambda a^{-1}Q + \lambda^2 c_2(\lambda)Q.
\end{aligned}$$
Thus $C(\lambda)/\lambda$ is invertible for $|\lambda|\ll 1$ and 
when $V \in \langle x \rangle^{-2} L^{3/2, 1}$ one has that
$$\begin{aligned}
C^{-1}(\lambda) &= \frac 1 {\lambda a^{-1} + \lambda^2 c_2(\lambda)} Q \\
&= \Big(\frac 1 {\lambda a^{-1}} + \frac 1 {\lambda a^{-1} + \lambda^2 c_2(\lambda)} - \frac 1 {\ds\lambda a^{-1}}\Big) Q\\
&= \Big(\frac {a}{\lambda} - \frac{c_2(\lambda)}{\ds\big(a^{-1} + \lambda c_2(\lambda)\big)a^{-1}}\Big)Q\\
&:=a\lambda^{-1}Q + E(\lambda).
\end{aligned}$$
By our computations, such as (\ref{j}), $\chi(\lambda/\epsilon) c_2(\lambda) \in \widehat L^1$ and $\chi(\lambda/\epsilon) \partial_\lambda c_2(\lambda) \in \widehat L^1$. Therefore for sufficiently small $\epsilon$, as $Q \in \B(L^{3/2, 2}) \cap B(L^{3, 2}) \cap \B(L^{3/2, 2}, L^{3, 2})$, it follows that $\chi(\lambda/\epsilon) E(\lambda) \in \W$.

The inverse of $I + \widehat T(\lambda)$ is then given for small $\lambda$ by formula (\ref{invers}):
$$
(I + \widehat T)^{-1}= \bpm T_{00}^{-1} + T_{00}^{-1} T_{01} C^{-1} T_{10} T_{00}^{-1}&-T_{00}^{-1} T_{01} C^{-1} \\ 
-C^{-1} T_{10} T_{00}^{-1} & C^{-1}
\epm.
$$
Three of the matrix elements belong to $\W$ when localized by $\chi(\lambda/\epsilon)$. Indeed, recall that $\chi(\lambda/\epsilon) T_{00}^{-1}(\lambda) \in \W$ and $T_{10}(\lambda) = \lambda E_1(\lambda)$ and $T_{01}(\lambda) = \lambda E_2(\lambda)$, while $C^{-1} = \lambda^{-1} E_3(\lambda)$, with $E_1, E_2, \chi(\lambda/\epsilon) E_3 \in \W$.

The fourth matrix element is $C^{-1}$ in the lower-right corner, which is the sum of the regular term $\chi(\lambda/\epsilon)E(\lambda) \in \W$ and the singular term
$$
a \lambda^{-1} \chi(\lambda/\epsilon) Q = -a\lambda^{-1} \chi(\lambda/\epsilon) V_2\phi \otimes V_1 \phi.
$$
As an aside, note that $\lambda^{-1} (1-\chi(\lambda/\epsilon)) \in \widehat L^1$ and same for its derivative. Thus we can also write the singular term as $a \lambda^{-1}Q$.

Further note that $(I + \widehat T)^{-1}$ is well-defined on a whole cut neighborhood of zero by formula (\ref{invers}) above. Thus zero is an isolated exceptional point, so there are finitely many negative eigenvalues.

%
%
\end{proof}

The next lemma shows what happens in case the potential has the critical rate of decay.

\begin{lemma}\lb{lemma27} Assume that $V \in L^{3/2, 1}$ and that $H=-\Delta+V$ is exceptional of the first kind. Let $\chi$ be a standard cutoff function. Then
$$
\chi(\lambda/\epsilon) (I + \widehat T(\lambda))^{-1} = L(\lambda) + \lambda^{-1} S(\lambda),
$$
with $L(\lambda) \in \W$ and $S(\lambda)^\vee \in \V_{L^{3, 2}, L^{3/2, 2}}$, for sufficiently small $\epsilon>0$.

Furthermore, $0$ is an isolated exceptional point, so $H$ has finitely many negative eigenvalues.

\end{lemma}

\begin{proof}[Proof of Lemma \ref{lemma27}]
We again apply Lemma \ref{lemma_invers} to
$$\begin{aligned}
I + \widehat T(\lambda) &:= 
\bpm \ov Q(I + \widehat T(\lambda)) \ov Q & \ov Q\widehat T(\lambda)Q \\ Q\widehat T(\lambda)\ov Q & Q(I+\widehat T(\lambda))Q \epm \equiv \bpm T_{00}(\lambda) & T_{01}(\lambda) \\ T_{10} (\lambda) & T_{11} (\lambda)\epm.
\end{aligned}$$
The proof of the fact that $\chi(\lambda/\epsilon) T_{00}^{-1}(\lambda) \in \W$ is the same as in Lemma \ref{lemma24}.

Then note that
$$\begin{aligned}
T_{11}(\lambda) &= Q (I+\widehat T(\lambda)) Q = Q (I + V_2 R_0((\lambda+i0)^2) V_1) Q\\
&=Q (V_2 R_0((\lambda+i0)^2) V_1 - V_2 R_0(0) V_1) Q\\
&=V_2 \phi \otimes V\phi(R_0((\lambda+i0)^2) - R_0(0)) V\phi \otimes V_1\phi \\
&:= \lambda c_0(\lambda) Q.
\end{aligned}$$
Observe that $c_0(0) = a^{-1} \ne 0$. Recall that $\ds a=\frac {4i\pi}{|\langle V, \phi\rangle|^2}$.

Note that $c_0(\lambda) \in \widehat L^1$ if
$$
\int_{\R^3} \int_{\R^3} V(x) \phi(x) V(y) \phi(y) \Big\|\frac{e^{i\lambda|x-y|} - 1}{\lambda |x-y|}\Big\|_{\widehat L^1_{\lambda}} \dd x \dd y < \infty.
$$
For every $x$ and $y$, by Lemma \ref{fourier}
$$
\Big\|\frac{e^{i\lambda|x-y|} - 1}{\lambda |x-y|}\Big\|_{\widehat L^1_{\lambda}}=\Big\|\frac {\chi_{[0, |x-y|]}(t)}{|x-y|}\Big\|_{L^1_t}=1,
$$
so it is enough to assume that $V \phi \in L^1$, i.e.\ that $V \in L^{3/2, 1}$, to prove that $c_0(\lambda) \in \widehat L^1$.




Furthermore, recalling that $Q = - V_2 \phi \otimes V_1 \phi$,
\be\lb{t01}\begin{aligned}
T_{01}(\lambda)  &:=\ov Q \widehat T(\lambda) Q = \ov Q (I+\widehat T(\lambda)) Q = (I+\widehat T(\lambda)) Q - Q (I+\widehat T(\lambda)) Q\\
&= -\big(V_2 (R_0((\lambda+i0)^2) -R_0(0)) V \phi + \lambda c_0(\lambda)V_2\phi\big) \otimes V_1 \phi\\
&= -\lambda \Big(V_2 \frac {R_0((\lambda+i0)^2)-R_0(0)}{\lambda} V \phi + c_0(\lambda)V_2\phi\Big) \otimes V_1 \phi.
\end{aligned}\ee

Likewise,
\be\lb{t10}\begin{aligned}
T_{10}(\lambda) &= -V_2\phi \otimes (V_1 (R_0^*((\lambda+i0)^2)-R_0(0)) V \phi +\lambda\ov{c_0(\lambda)}V_1 \phi)\\
&= -\lambda V_2\phi \otimes \Big(V_1 \frac {R_0^*((\lambda+i0)^2)-R_0(0)}{\lambda} V \phi +\ov{c_0(\lambda)}V_1 \phi\Big).
\end{aligned}\ee
Thus $T_{10}^\vee$ and $T_{01}^\vee$ are both in $\V_{L^{3/2, 2}} \cap \V_{L^{3, 2}}$ by the second line of (\ref{t01}), respectively the first line of (\ref{t10}), when $V \in L^{3/2, 1}$. Indeed, following the definition this reduces to
$$
\int_{\R^3} \frac {|V_2(x)| |V(y)| |\phi(y)|}{4\pi|x-y|} \dd y \in L^{3/2, 2}_x \cap L^{3, 2}_x.
$$


Next, $-T_{10} (\lambda) T_{00}^{-1}(\lambda) T_{01} (\lambda) = \lambda c_1(\lambda) Q$, where
\be\begin{aligned}\lb{formula_c1}
c_1(\lambda) 
&=-\Big\langle V_1 (R_0^*((\lambda+i0)^2)-R_0(0)) V\phi+ \lambda \ov{c_0(\lambda)}V_1 \phi, \\
&T_{00}^{-1}(\lambda) \Big(V_2 \frac {R_0((\lambda+i0)^2)-R_0(0)}{\lambda} V\phi+ c_0 (\lambda)V_2\phi\Big) \Big\rangle.
\end{aligned}\ee
For example, one term from formula (\ref{formula_c1}) has the form
\be\lb{termen1}
\Big\langle V_1 (R_0^*((\lambda+i0)^2)-R_0(0)) V_2 V_1\phi, T_{00}^{-1}(\lambda) V_2 \frac {R_0((\lambda+i0)^2)-R_0(0)}{\lambda} V_1 V_2\phi \Big\rangle.
\ee
Note that $V_1 (R_0^*((\lambda+i0)^2)-R_0(0)) V_2$ and $\chi(\lambda/\epsilon) T_{00}^{-1}(\lambda)$ are in $\W$, while  $$
M \Big(V_2 \frac {R_0((\lambda+i0)^2)-R_0(0)}{\lambda} V_1\Big) = \frac{|V_2| \otimes |V_1|}{4\pi} \in \B(L^{3/2, 2}, L^{3, 2}),
$$
so $\ds V_2 \frac {R_0((\lambda+i0)^2)-R_0(0)}{\lambda} V_1 \in \V_{L^{3/2, 2}, L^{3, 2}}$. Taking into account the fact that $V_1 \phi$, $V_2\phi \in L^{3/2, 2}$, it follows that (\ref{termen1}) is in $\widehat L^1$.

Thus we recognize from (\ref{formula_c1}) that $c_1(\lambda) \in \widehat L^1$ when $V \in L^{3/2, 1}$.

Further note that, since when $\lambda=0$ $R_0^*((\lambda+i0)^2) - R_0(0) = 0$, $c_1(0)=0$.
%

Let
$$
C(\lambda) := T_{11}(\lambda) - T_{10}(\lambda)T_{00}^{-1}(\lambda)T_{01} (\lambda).
$$
Then
$$\begin{aligned}
C(\lambda) &= \lambda(c_0(\lambda) + c_1(\lambda)) Q.
\end{aligned}$$
Thus $C(\lambda)/\lambda$ is invertible for $|\lambda|\ll 1$ and 
$C^{-1}(\lambda) = \lambda^{-1} c_2(\lambda) Q$, with $c_2$ locally in $\widehat L^1$. Consequently, for small $\epsilon$ $(\chi(\lambda/\epsilon) \lambda C^{-1}(\lambda))^\vee \in \V_{L^{3, 2}, L^{3/2, 2}}$.

The inverse of $I+\widehat T(\lambda)$ is then given for small $\lambda$ by formula (\ref{invers}):
$$
(I + \widehat T)^{-1}= \bpm T_{00}^{-1} + T_{00}^{-1} T_{01} C^{-1} T_{10} T_{00}^{-1}&-T_{00}^{-1} T_{01} C^{-1} \\ 
-C^{-1} T_{10} T_{00}^{-1} & C^{-1}
\epm.
$$
Since $T_{00}^{-1} \in \W$ and $T_{01}^\vee$, $T_{10}^\vee \in \V_{L^{3/2, 2}} \cap \V_{L^{3, 2}}$, while $(\chi(\lambda/\epsilon) \lambda C^{-1}(\lambda))^\vee \in \V_{L^{3, 2}, L^{3/2, 2}}$, it immediately follows that $\lambda ((I+\widehat T(\lambda))^{-1} - T_{00}^{-1}(\lambda)) \in \V_{L^{3, 2}, L^{3/2, 2}}$ and that $(I+\widehat T)^{-1}$, given by formula (\ref{invers}), exists on a whole cut neighborhood of zero.
\end{proof}

Recall that by (\ref{rs})
$$
R(t) := \frac{ae^{-i\frac{3\pi}4}}{\sqrt{i \pi t}} \zeta_t(x) \otimes \zeta_t(y),\ \zeta_t(x) := e^{i|x|^2/4t} \phi(x).
$$

\begin{proposition}\lb{prop28} Assume that $\langle x \rangle^2 V \in L^{3/2, 1}$ and that $H=-\Delta+V$ is an exceptional Hamiltonian of the first kind with canonical resonance $\phi$ at zero. Then for $1 \leq p < 3/2$ and $R(t)$ as above
$$\begin{aligned}
&e^{-itH} P_c u = Z(t) u + R(t)u,\\
&\|Z(t)u\|_{L^{p'}} \les t^{-\frac 3 2(\frac 1 p - \frac 1 {p'})} \|f\|_{L^p},\ \|Z(t)u\|_{L^{3, \infty}} \les t^{-\frac 1 2} \|f\|_{L^{3/2, 1}}.
\end{aligned}$$
Furthermore, for $3/2<p\leq 2$
$$
\|e^{-itH} P_c u\|_{L^{p'}} \les t^{-\frac 3 2(\frac 1 p - \frac 1 {p'})} \|u\|_{L^p}.
$$
Here $\frac 1 p + \frac 1 {p'} = 1$.
\end{proposition}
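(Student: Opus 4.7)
The plan is to prove Proposition~\ref{prop28} by feeding the low-energy expansion of $\widehat T(\lambda)^{-1}$ from Lemma~\ref{lemma24} into Stone's formula, isolating the rank-one singular term as the source of $R(t)$ and treating everything else by the Wiener-algebra calculus of Lemmas~\ref{lemma_22}--\ref{lemma23}. The approach closely follows the scheme of Yajima~\cite{yajima_disp}, replacing pointwise H\"older estimates by the $\V$-space structure developed in this paper.

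First I would apply Stone's formula, change variable $\eta=\lambda^2$, and use the symmetric resolvent identity $R_V(\lambda^2)=R_0(\lambda^2) - R_0(\lambda^2) V_1 \widehat T(\lambda)^{-1} V_2 R_0(\lambda^2)$. This rewrites $e^{-itH}P_c u$ as the free evolution $e^{it\Delta}u$ (whose dispersive bounds are classical) plus a perturbation
$$
-\frac{1}{\pi i}\int_0^\infty e^{-it\lambda^2}\lambda\,\bigl(R_0 V_1 \widehat T^{-1} V_2 R_0\bigr)(\lambda^2)\,u \dd\lambda,
$$
with the $\pm i0$ jump understood. A cutoff $\chi(\lambda/\epsilon)$ splits the integrand into a small-$\lambda$ piece, where Lemma~\ref{lemma24} applies, and a large-$\lambda$ piece, where Lemmas~\ref{lemma_22} and~\ref{lemma23} combined with a partition of unity place the integrand in $\V_{L^{3/2, 2}, L^{3, 2}}$. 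Combined with $R_0(\lambda^2) \in \V_{L^{3/2, 1}, L^\infty} \cap \V_{L^1, L^{3, \infty}}$ and the algebroid property of the $\V$-spaces, this large-$\lambda$ piece contributes to $Z(t)$ a term satisfying both the $L^p\to L^{p'}$ bound with decay $t^{-\frac 3 2(\frac 1 p - \frac 1 {p'})}$ and the endpoint $L^{3/2,1}\to L^{3,\infty}$ estimate of order $t^{-1/2}$.

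The decisive step is the small-$\lambda$ analysis. Inserting
$$
\widehat T(\lambda)^{-1} = L(\lambda) - a\lambda^{-1}\, V_2\phi \otimes V_1\phi
$$
into the resolvent identity and using $V_1 V_2 = V$, the regular factor $L(\lambda) \in \W_{loc}$ is absorbed in $Z(t)$ by the same Wiener calculus, while the singular term contributes
$$
a\lambda^{-1}\,R_0(\lambda^2)(V\phi) \otimes R_0(\lambda^2)^{*}(V\phi)
$$
to $R_V(\lambda^2)$. Forming the $\pm i0$ jump cancels $\lambda^{-1}$ against the $\lambda\dd\lambda$ Jacobian, leaving an oscillatory integral of the form
$$
\frac{a}{\pi i}\int_0^\infty e^{-it\lambda^2}\bigl[(R_0^+V\phi)\otimes(R_0^-V\phi)+(R_0^-V\phi)\otimes(R_0^+V\phi)\bigr]\dd\lambda.
$$
To evaluate this I would use the resonance identity $R_0(0)V\phi=-\phi$ to split $R_0^\pm V\phi = -\phi + (R_0^\pm - R_0(0))V\phi$; by Lemma~\ref{fourier} the Wiener norms of both the correction and its $\lambda$-derivative are finite precisely under the hypothesis $\langle x\rangle^2 V\in L^{3/2, 1}$, so all cross and remainder pieces land in $Z(t)$. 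The leading half-line Fresnel integral is evaluated using the standard Gaussian $\int_{-\infty}^\infty e^{-it\lambda^2 + i\lambda s}\dd\lambda = \sqrt{\pi/t}\,e^{-i\pi/4}\,e^{is^2/4t}$, whose stationary-phase values produce the Gaussian phases encoded in $\zeta_t(x)$, $\zeta_t(y)$ and the prefactor $\frac{ae^{-i3\pi/4}}{\sqrt{\pi t}}$, thereby reconstructing $R(t)$.

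The hard part is keeping the $\widehat L^1$/Wiener bookkeeping clean enough so that the $\lambda^{-1}$ singularity cancels exactly and all residual terms fall into the regular algebra $\W_{loc}$; this is where the sharper decay condition $\langle x\rangle^2 V\in L^{3/2, 1}$ is indispensable, as it is precisely what Lemma~\ref{lemma24} needs to place $c_0$, $c_1$ and their $\lambda$-derivatives in $\widehat L^1$, which in turn justifies the integration by parts in $\lambda$ yielding the full dispersive rate for $Z(t)$. For the second statement, assuming only $V\in L^{3/2,1}$, Lemma~\ref{lemma27} replaces Lemma~\ref{lemma24}: the singular part of $\widehat T(\lambda)^{-1}$ now only satisfies $\lambda S(\lambda)\in\V_{L^{3, 2}, L^{3/2, 2}}$, which is too weak to extract an explicit $R(t)$, but combined with the unitary $L^2$ conservation and Riesz--Thorin interpolation it still yields the stated $L^p\to L^{p'}$ bound in the range $3/2<p\le 2$.
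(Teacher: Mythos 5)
Your overall architecture matches the paper's: Stone's formula with the symmetric resolvent identity, a partition of unity separating the regular terms (handled by the Wiener calculus of Lemmas \ref{lemma_22}--\ref{lemma23} and Lemma \ref{lemma24}, with one integration by parts in $\lambda$ giving $t^{-1}$ and the Gaussian Fourier transform giving $t^{-1/2}$), $L^2$ boundedness by smoothing estimates, and interpolation; the treatment of the range $3/2<p\le 2$ via Lemma \ref{lemma27} is also the paper's. The gap is in your treatment of the singular piece $Z_2$. You propose to split $R_0^{\pm}(\lambda^2)V\phi=-\phi+(R_0^{\pm}(\lambda^2)-R_0(0))V\phi$ \emph{before} performing the $\lambda$-integral and to claim that ``all cross and remainder pieces land in $Z(t)$.'' That fails: the leading term then becomes $\int e^{-it\lambda^2}\chi_{00}\,d\lambda\cdot\phi\otimes\phi\propto t^{-1/2}\phi\otimes\phi$ \emph{without} the Gaussian phases $e^{i|x|^2/4t}$, while a cross term such as $\phi(x)\int\frac{e^{-i\lambda|y-z|}-1}{4\pi|y-z|}V(z)\phi(z)\,dz$ integrated against $e^{-it\lambda^2}$ is of size $t^{-1/2}\min(1,|y-z|^2/t)/|y-z|$, which at $|y|\sim\sqrt t$ is only $O(t^{-1})$, not $O(t^{-3/2})$, in $\B(L^1,L^\infty)$. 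Equivalently, after integration by parts the relevant kernel has $M$-function $\phi(x)\otimes\frac{|y-z|}{8\pi}|V(z)\phi(z)|$, and $\sup_y|y-z|=\infty$, so it is not in $\V_{L^1,L^\infty}$. This is consistent with the fact that $t^{-1/2}(\zeta_t\otimes\zeta_t-\phi\otimes\phi)$ is itself only $O(t^{-1})$ in $\B(L^1,L^\infty)$: the phases in $R(t)$ are not removable.

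The correct order of operations (the paper's Lemma \ref{lemma4.10}, following Yajima) is to keep the full oscillatory kernel and evaluate $C(t,b)=\frac1{i\pi}\int e^{-it\lambda^2+i\lambda b}\chi_{00}(\lambda)\,d\lambda$ with $b=|x-z_1|+|z_2-y|$ first, obtaining $t^{-1/2}e^{ib^2/4t}$ up to constants, and only then replace $e^{ib^2/4t}$ by $e^{i(|x|^2+|y|^2)/4t}$. The point is that $b^2-(|x|^2+|y|^2)$ is controlled by $B=2(|x-z_1||z_1|+|z_2-y||z_2|+|x-z_1||z_2-y|)+|z_1|^2+|z_2|^2$, so after dividing by the kernels $|x-z_1|^{-1}|z_2-y|^{-1}$ the error is an integral weighted by $|z_1|,|z_2|$ against $V\phi$ --- finite under $\langle x\rangle^2V\in L^{3/2,1}$ --- rather than a supremum over $x,y$. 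Your proposal has the right ingredients (the Fresnel integral, the resonance identity) but applies them in an order that loses exactly the phase information that defines $R(t)$; as written, the $t^{-3/2}$ bound for $Z(t)=e^{-itH}P_c-R(t)$ from $L^1$ to $L^\infty$ does not follow.
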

\begin{proof}[Proof of Proposition \ref{prop28}]
Write the evolution as
$$
e^{-itH}P_c f = \frac 1 {i\pi} \int_\R e^{-it\lambda^2} \big(R_0((\lambda+i0)^2) - R_0((\lambda+i0)^2) V_1 \widehat T(\lambda)^{-1}  V_2 R_0((\lambda+i0)^2)\big) f \lambda \dd \lambda.
$$
We consider a partition of unity subordinated to the neighborhoods of Lemmas \ref{lemma23} and \ref{lemma24}. First, take a sufficiently large $R$ such that $(1-\chi(\lambda/R)) (I+\widehat T(\lambda))^{-1} \in \W$. Then, for every $\lambda_0 \in [-4R, 4R]$ there exists $\epsilon(\lambda_0)>0$ such that $\chi(\frac{\lambda-\lambda_0}{\epsilon(\lambda_0)}(I+\widehat T(\lambda))^{-1} \in \W$ if $\lambda_0\ne 0$ or the conclusion of Lemma \ref{lemma24} holds when $\lambda_0=0$.

Since $[-4R, 4R]$ is a compact set, there exists a finite covering $[-4R, 4R] \subset \bigcup_{k=1}^N (\lambda_k - \epsilon(\lambda_k), \lambda_k + \epsilon(\lambda_k))$. Then we construct a finite partition of unity on $\R$ by smooth functions $1= \chi_0(\lambda) + \sum_{k=1}^N \chi_k(\lambda) + \chi_\infty(\lambda)$, where $\supp \chi_\infty \subset \R \setminus (-2R, 2R)$, $\supp \chi_0 \subset [-\epsilon(0), \epsilon(0)]$, and $\supp \chi_k \subset [\lambda_k-\epsilon(\lambda_k), \lambda_k+\epsilon(\lambda_k)]$.

By Lemma \ref{lemma23}, for any $k \ne 0$, $\chi_k(\lambda) (I+\widehat T (\lambda))^{-1} \in \W$, so $(1-\chi_0(\lambda)) (I+\widehat T(\lambda))^{-1} \in \W$. By Lemma \ref{lemma24} $\chi_0(\lambda) \widehat T(\lambda)$ also decomposes into a regular term $L \in \W$ and a singular term $-\lambda^{-1} \chi_0(\lambda) a V_2 \phi \otimes V_1 \phi$.

Let $Z_1$ be given by the sum of all the regular terms in the decomposition:
$$\begin{aligned}
Z_1(t) &:= \frac 1 {i\pi} \int_\R e^{-it\lambda^2} \big(R_0((\lambda+i0)^2) - R_0((\lambda+i0)^2) V_1 L(\lambda) V_2 R_0((\lambda+i0)^2) - \\
&- (1-\chi_0(\lambda)) R_0((\lambda+i0)^2) V_1 \widehat T(\lambda) V_2 R_0((\lambda+i0)^2) \big) \lambda \dd \lambda \\
&= \frac 1 {2\pi t} \int_\R e^{-it\lambda^2} \partial_\lambda \big(R_0((\lambda+i0)^2) - R_0((\lambda+i0)^2) V_1 L(\lambda) V_2 R_0((\lambda+i0)^2) -\\
&-(1-\chi_0(\lambda)) R_0((\lambda+i0)^2) V_1 \widehat T(\lambda) V_2 R_0((\lambda+i0)^2) \big) \dd \lambda \\
&= \frac C {t^{3/2}} \int_\R e^{-i\frac{\rho^2}{4t}} \big(\partial_\lambda \big(R_0((\lambda+i0)^2) - R_0((\lambda+i0)^2) V_1 L(\lambda) V_2 R_0((\lambda+i0)^2) -\\
&-(1-\chi_0(\lambda)) R_0((\lambda+i0)^2) V_1 \widehat T(\lambda) V_2 R_0((\lambda+i0)^2) \big)\big)^\vee(\rho) \dd \rho.
\end{aligned}$$
The fact that $\|Z_1(t) u\|_{L^\infty} \les |t|^{-3/2} \|u\|_{L^1}$ follows by knowing that
$$\begin{aligned}
\big(\partial_\lambda \big(&R_0((\lambda+i0)^2) - R_0((\lambda+i0)^2) V_1 L(\lambda) V_2 R_0((\lambda+i0)^2) - \\
&-(1-\chi_0(\lambda)) R_0((\lambda+i0)^2) V_1 \widehat T(\lambda) V_2 R_0((\lambda+i0)^2)\big)\big)^\vee \in \V_{L^1, L^{\infty}}.
\end{aligned}$$
The fact that $\|Z_1(t) u\|_{L^2} \les \|u\|_{L^2}$ follows by smoothing estimates. Indeed, the first term is bounded since it represents the free evolution and note that
$$\begin{aligned}
&\|V_2 R_0(\lambda\pm i0) f\|_{L^2_{\lambda, x}} \les \|f\|_{L^2_x},\\
&\|e^{-it\lambda} (L(\pm\sqrt\lambda)+(1-\chi_0(\pm\sqrt\lambda)) \widehat T(\pm\sqrt\lambda))\|_{L^\infty_\lambda \B(L^2)} < \infty,\\
&\Big\|\int_{\R} R_0(\lambda \pm i0) V_1 F(x, \lambda) \dd \lambda\Big\|_{L^2_x} \les \|F\|_{L^2_{\lambda, x}}.
\end{aligned}$$
Combining these three estimates we obtain the $L^2$ boundedness of $Z_1$.

By interpolation between the two bounds we obtain that, for $\frac 1 p + \frac 1 {p'} = 1$, $1 \leq p \leq 2$,
$$
\|Z_1(t) u\|_{L^{p'}} \les t^{-\frac 3 2 (\frac 1 p -\frac 1 {p'})} \|u\|_{L^p},
$$
as well as
$$
\|Z_1(t) u\|_{L^{3, \infty}} \les t^{-1/2} \|u\|_{L^{3/2, 1}}.
$$

Let $Z_2$ be the term corresponding to the singular part of the decomposition from Lemma \ref{lemma24}, given~by
$$\begin{aligned}
Z_2(t) &:= \frac {a}{i\pi} \int_{\R} e^{-it\lambda^2}\chi_0(\lambda) R_0((\lambda+i0)^2) V \phi \otimes V \phi R_0((\lambda+i0)^2) \dd \lambda \\
&= \frac{a}{i\pi} \int_\R \int_{(\R^3)^2} e^{-it\lambda^2} \chi_0(\lambda) \frac {e^{i\lambda|x-z_1|}}{4\pi|x-z_1|}V(z_1) \phi(z_1) V(z_2) \phi(z_2) \frac {e^{i\lambda |z_2-y|}}{4\pi|z_2-y|} \dd z_1 \dd z_2 \dd \lambda.
\end{aligned}$$

The subsequent Lemma \ref{lemma4.10} is the same as Lemma 4.10 from \cite{yajima_disp}, the only difference being the space of potentials for which the result holds. For the sake of completeness we repeat the proof given in \cite{yajima_disp}.
\begin{lemma}\lb{lemma4.10} For $V \in \langle x \rangle^{-1} L^{3/2, 1}$
\be\lb{z2}
\|(Z_2(t) - R(t)) u\|_{L^{\infty}} \les t^{-3/2} \|u\|_{L^1},\ \|Z_2(t) u\|_{L^{3, \infty}} \les t^{-1/2} \|u\|_{L^{3/2, 1}}.
\ee
\end{lemma}

\begin{proof}[Proof of Lemma \ref{lemma4.10}]
Let $b = |x-z_1| + |z_2-y|$ and
$$\begin{aligned}
C(t, b) &= \frac 1 {i\pi} \int_\R e^{-it\lambda^2+i\lambda b} \chi_0(\lambda) \dd \lambda.
\end{aligned}$$
We express $Z_2(t)$ as
$$
Z_2(t) = \int_{(\R^3)^2} C(t, b) a \frac {V(z_1) \phi(z_1) V(z_2) \phi(z_2)} {|x-z_1| |z_2-y|} \dd z_1 \dd z_2.
$$
Note that
$$\begin{aligned}
C(t, b)&=\frac {e^{-i\frac{3\pi}4} e^{i\frac{b^2}{4t}}}{\sqrt{\pi t}} \big(e^{i\frac{s^2}{4t}} \chi_0^{\vee}(s)\big)^{\wedge}(b/2t).
\end{aligned}$$
Then $C(t, b) \les t^{-1/2}$ and
$$
|Z_2(t)(x, y)| \les t^{-1/2} \int_{(\R^3)^2} \frac {|V(z_1) \phi(z_1) V(z_2) \phi(z_2)|}{|z_1-x| |z_2-y|} \dd z_1 \dd z_2.
$$
Clearly $\ds\int_{\R^3} \frac {|V(z_1) \phi(z_1)|}{|z_1-x|} \dd z_1 \in L^{3, \infty}_x$ and $\ds\int_{\R^3} \frac {|V(z_2) \phi(z_2)|}{|z_2-y|} \dd z_2 \in L^{3, \infty}_y$, implying the second half of (\ref{z2}):
$$
\|Z_2(t) u\|_{L^{3, \infty}} \les t^{-1/2} \|u\|_{L^{3/2, 1}}.
$$
We also have
$$
\Big| \mc F(e^{\frac{is^2}{4t}}\chi_0^{\vee}(s))(b/2t) - 1\Big| \les t^{-1} (\|s^2\chi_0^{\vee}\|_1 + |b|).
$$
It is easy to see, for
$$
B = 2(|x - z_1 ||z_1 | + |z_2 - y ||z_2 | + |x-z_1| |z_2-y|) + |z_1 |^2 + |z_2 |^2,
$$
that
$$
|e^{ib^2/4t}-e^{i(x^2+y^2)/4t}| = |e^{i(|x-z_1| + |z_2-y|)^2/4t}-e^{i(x^2+y^2)/4t}| \leq \frac{B}{4t}.
$$
It follows that
$$
C(t, b) - \frac{e^{-i\frac{3\pi}4}e^{i(x^2+y^2)/4t}}{\sqrt{\pi t}} \les (1+b+B) t^{-3/2}.
$$
Then
$$\begin{aligned}
&\Big|Z_2(t) - \int_{(\R^3)^2} \frac{e^{i\frac {3\pi}4} e^{i(x^2+y^2)/4t}}{\sqrt{\pi t}} a \frac{V(z_1)\phi(z_1)V(z_2)\phi(z_2) \dd z_1 \dd z_2}{|x-z_1| |y-z_2|} \Big|\les  \\
&\les t^{-3/2} \int_{(\R^3)^2} \frac{(1+b+B)|V(z_1) \phi(z_1)V(z_2)\phi(z_2)|}{|x-z_1| |z_2-y|} \dd z_1 \dd z_2.
\end{aligned}$$
Now note that for $V \in \langle x \rangle^{-1} L^{3/2, 1}$ and $\phi(x) \les |x|^{-1}$
$$
\sup_{x, y} \int_{(\R^3)^2} \frac{(1+b+B)|V(z_1) \phi(z_1)V(z_2)\phi(z_2)|}{|x-z_1| |z_2-y|} \dd z_1 \dd z_2 < \infty
$$
and
$$
\int_{\R^3} \frac{V(z_1) \phi(z_1) \dd z_1}{|x-z_1|} = \phi(x).
$$
The first part of conclusion (\ref{z2}) follows.
\end{proof}

Note that $R(t)$ also satisfies $\|R(t) u\|_{L^{3, \infty}} \les t^{-1/2} \|u\|_{L^{3/2, 1}}$, so the same holds for the difference:
$$
\|(Z_2(t)-R(t)) u\|_{L^{3, \infty}} \les t^{-1/2} \|u\|_{L^{3/2, 1}}
$$
By interpolation with the $L^1$ to $L^{\infty}$ estimate of Lemma \ref{lemma4.10} we obtain that for $1 \leq p < 3/2$
$$
\|(Z_2(t)-R(t)) u\|_{L^{p'}} \les t^{-\frac 3 2(\frac 1 p - \frac 1 {p'})} \|u\|_{L^p}.
$$
Since the same is true for $Z_1$, we obtain for $1 \leq p < 3/2$ that
$$
\|(Z(t) u\|_{L^{p'}} = \|(Z_1(t)+Z_2(t)-R(t)) u\|_{L^{p'}} \les t^{-\frac 3 2(\frac 1 p - \frac 1 {p'})} \|u\|_{L^p},
$$
where $e^{-itH} P_c u = Z_1(t) u + Z_2(t) u = Z(t) u + R(t) u$.

Knowing that $\|Z_i(t) u\|_{L^{3, \infty}} \les t^{-1/2} \|u\|_{L^{3/2, 1}}$ leads to the conclusion that $\|e^{-itH} P_c u\|_{L^{3, \infty}} \les \|u\|_{L^{3/2, 1}}$. Combining this with the $L^2$ estimate $\|e^{-itH} P_c u\|_{L^2} \les \|u\|_{L^2}$, we obtain that for $3/2<p \leq 2$
$$
\|e^{-itH} P_c u\|_{L^{p'}} \les t^{-\frac 3 2(\frac 1 p - \frac 1 {p'})} \|u\|_{L^p}.
$$
Thus we have proved all the conclusions of Proposition \ref{prop28}.
\end{proof}

\begin{proposition}\lb{prop29} Assume that $V \in L^{3/2, 1}$ and that $H=-\Delta+V$ is an exceptional Hamiltonian of the first kind. Then
$$
\|e^{-itH}P_c u\|_{L^{3, \infty}} \les t^{-1/2} \|u\|_{L^{3/2, 1}}
$$
and for $3/2<p\leq 2$
$$
\|e^{-itH} P_c u\|_{L^{p'}} \les t^{-\frac 3 2(\frac 1 p - \frac 1 {p'})} \|u\|_{L^p}.
$$
Here $\frac 1 p + \frac 1 {p'} = 1$.
\end{proposition}
\begin{proof}[Proof of Proposition \ref{prop29}]
Write the evolution as
$$
e^{-itH}P_c f = \frac 1 {i\pi} \int_\R e^{-it\lambda^2} \big(R_0((\lambda+i0)^2) - R_0((\lambda+i0)^2) V_1 \widehat T(\lambda)^{-1}  V_2 R_0((\lambda+i0)^2)\big) f \lambda \dd \lambda.
$$
We consider a partition of unity subordinated to the neighborhoods of Lemmas \ref{lemma23} and \ref{lemma27}. First, take a sufficiently large $R$ such that $(1-\chi(\lambda/R)) (I+\widehat T(\lambda))^{-1} \in \W$. Then, for every $\lambda_0 \in [-4R, 4R]$ there exists $\epsilon(\lambda_0)>0$ such that $\chi(\frac{\lambda-\lambda_0}{\epsilon(\lambda_0)}(I+\widehat T(\lambda))^{-1} \in \W$ if $\lambda_0\ne 0$ or the conclusion of Lemma \ref{lemma27} holds when $\lambda_0=0$.

Since $[-4R, 4R]$ is a compact set, there exists a finite covering $[-4R, 4R] \subset \bigcup_{k=1}^N (\lambda_k - \epsilon(\lambda_k), \lambda_k + \epsilon(\lambda_k))$. Then we construct a finite partition of unity on $\R$ by smooth functions $1= \chi_0(\lambda) + \sum_{k=1}^N \chi_k(\lambda) + \chi_\infty(\lambda)$, where $\supp \chi_\infty \subset \R \setminus (-2R, 2R)$, $\supp \chi_0 \subset [-\epsilon(0), \epsilon(0)]$, and $\supp \chi_k \subset [\lambda_k-\epsilon(\lambda_k), \lambda_k+\epsilon(\lambda_k)]$.

By Lemma \ref{lemma23}, for any $k \ne 0$, $\chi_k(\lambda) (I+\widehat T (\lambda))^{-1} \in \W$, so $(1-\chi_0(\lambda)) (I+\widehat T(\lambda))^{-1} \in \W$. By Lemma \ref{lemma27} $\chi_0(\lambda) (I+\widehat T(\lambda))^{-1}$ also decomposes into a regular term $L \in \W$ and a singular term $\lambda^{-1} S$ with the property that $S^\vee \in \V_{L^{3, 2}, L^{3/2, 2}}$.


Let $Z_1$ be given by the sum of all the regular terms of the decomposition:
$$\begin{aligned}
Z_1(t) &:= \frac 1 {i\pi} \int_\R e^{-it\lambda^2} \big(R_0((\lambda+i0)^2) - R_0((\lambda+i0)^2) V_1 L(\lambda) V_2 R_0((\lambda+i0)^2) - \\
&- (1-\chi_0(\lambda)) R_0((\lambda+i0)^2) V_1 \widehat T(\lambda) V_2 R_0((\lambda+i0)^2) \big) \lambda \dd \lambda \\
&= \frac 1 {2\pi t} \int_\R e^{-it\lambda^2} \partial_\lambda \big(R_0((\lambda+i0)^2) - R_0((\lambda+i0)^2) V_1 L(\lambda) V_2 R_0((\lambda+i0)^2) - \\
&- (1-\chi_0(\lambda)) R_0((\lambda+i0)^2) V_1 \widehat T(\lambda) V_2 R_0((\lambda+i0)^2) \big) \dd \lambda\\
&= \frac C {t^{3/2}} \int_\R e^{-i\frac{\rho^2}{4t}} \big(\partial_\lambda \big(R_0((\lambda+i0)^2) - R_0((\lambda+i0)^2) V_1 L(\lambda) V_2 R_0((\lambda+i0)^2) - \\
&- (1-\chi_0(\lambda)) R_0((\lambda+i0)^2) V_1 \widehat T(\lambda) V_2 R_0((\lambda+i0)^2) \big)\big)^\vee(\rho) \dd \rho.
\end{aligned}$$
The fact that $\|Z_1(t) u\|_{L^\infty} \les |t|^{-3/2} \|u\|_{L^1}$ follows by knowing that
$$\begin{aligned}
\big(\partial_\lambda \big(&R_0((\lambda+i0)^2) - R_0((\lambda+i0)^2) V_1 L(\lambda) V_2 R_0((\lambda+i0)^2) - \\
&- (1-\chi_0(\lambda)) R_0((\lambda+i0)^2) V_1 \widehat T(\lambda) V_2 R_0((\lambda+i0)^2)\big)\big)^\vee \in \V_{L^1, L^{\infty}}.
\end{aligned}$$

By smoothing estimates it immediately follows that $Z_1(t)$ is $L^2$-bounded, see the proof of Proposition \ref{prop28}. Interpolating we obtain the desired $\|Z_1(t)u\|_{L^{3, \infty}} \les t^{-1/2} \|u\|_{L^{3, 1}}$ estimate.

Let $Z_2$ be the singular part of the decomposition from Lemma \ref{lemma27}, given~by
\be\begin{aligned}\lb{zzz}
Z_2(t) &:= \frac{1}{i\pi} \int_{\R} e^{-it\lambda^2} R_0((\lambda+i0)^2) V_1 S(\lambda) V_2 R_0((\lambda+i0)^2) \dd \lambda.
\end{aligned}\ee
Note that $(R_0((\lambda+i0)^2) V_1)^\vee \in \V_{L^{3/2, 2}, L^{3, \infty}}$, $S(\lambda)^\vee \in \V_{L^{3, 2}, L^{3/2, 2}}$, and $(V_2 R_0((\lambda+i0)^2))^\vee \in \V_{L^{3/2, 1}, L^{3, 2}}$. Thus
$$
R_0((\lambda+i0)^2) V_1 (\lambda S(\lambda)) V_2 R_0((\lambda+i0)^2) \in \V_{L^{3/2, 1}, L^{3, \infty}}.
$$
By taking the Fourier transform in (\ref{zzz}), this immediately implies the conclusion that $\|Z_2(t) u\|_{L^{3, \infty}} \les t^{-1/2} \|u\|_{L^{3/2, 1}}$.

Putting the two estimates for $Z_1$ and $Z_2$ together, we obtain that $\|e^{-itH}P_c u\|_{L^{3, \infty}} \les \|u\|_{L^{3/2, 1}}$. Interpolating with the obvious $L^2$ bound $\|e^{-itH} P_c u\|_{L^2} \les \|u\|_{L^2}$, we obtain the stated conclusion.
\end{proof}

\subsection{Exceptional Hamiltonians of the third kind} We next consider the case in which $H$ is exceptional of the third kind, i.e.\ there are both zero eigenvectors and zero resonances. Recall that $\widehat T(\lambda) = V_2 R_0((\lambda+i0)^2) V_1$.
\begin{lemma}\lb{lemma28} Suppose that $V \in \langle x \rangle^{-4} L^{3/2, 1}$ and $H=-\Delta+V$ has both eigenvectors and resonances at zero. Let $\chi$ be a standard cutoff function. Then for sufficiently small $\epsilon$
$$\begin{aligned}
\chi(\lambda/\epsilon) (I + \widehat T(\lambda))^{-1} &= L(\lambda) + \chi(\lambda/\epsilon)\Big(\frac{V_2 P_0 V_1}{\lambda^2} + \frac{i V_2 P_0 V |x-y|^2 V P_0 V_1}{\lambda} - \\
&- \frac {a V_2\phi \otimes V_1\phi}{\lambda}\Big),
\end{aligned}$$
where $L(\lambda) \in \W$ and $\phi$ is a certain resonance for $H=-\Delta+V$.

Furthermore, $0$ is an isolated exceptional point for $H$, meaning that $H$ has finitely many negative eigenvalues.
\end{lemma}
The computations in the proof of this lemma parallel those in Section 4.5 of \cite{yajima_disp}. The main difference is using the space $\W$ instead of H\"{o}lder spaces.
\begin{proof}[Proof of Lemma \ref{lemma28}]
We study $(I+\widehat T(\lambda))^{-1} := (I + V_2R_0((\lambda+i0)^2)V_1)^{-1}$ near $\lambda = 0$.

Let
$$
Q = - \frac 1 {2\pi i} \int_{|z+1|=\delta} (V_2 R_0(0) V_1 - zI)^{-1} \dd z
$$

Take the orthonormal basis $\{\phi_1 , . . . , \phi_N\}$ with respect to the inner product $-(V u, v)$ for $\mc M$ so that $\{\phi_2 , \ldots , \phi_N\}$ is a basis of $\mc E$ and $\langle \phi_1 , V\rangle > 0$. This condition determines $\phi_1$ uniquely.

Define the orthogonal projections $\pi_1$ onto $\C V_1 \phi_1$ and $\pi_2$ onto $V_1 P_0 L^2$ with respect to the inner product $-(\sgn V u, v)$, i.e.\ $\pi_1 = -V_2 \phi_1\otimes V_1 \phi_1$ and $\pi_2 = - \sum_{j=2}^N V_2\phi_j \otimes V_1 \phi_j$ and let
$$
Q_0 = \ov Q := 1 - Q,\ Q_1 := Q \pi_1 Q,\ Q_2 := Q \pi_2 Q.
$$



The following identities hold in $L^2$:
$$
\begin{aligned}
&Q_j Q_k = \delta_{j k} I,\ j, k = 0, 1, 2 \text{ and } Q_0 + Q_1 + Q_2 = I,\\
&(I + V_2 R_0(0) V_1)Q_1 = Q_1(I+V_2 R_0(0) V_1) = 0,\\
&(I + V_2 R_0(0) V_1)Q_2 = Q_2(I+V_2 R_0(0) V_1) = 0,\\
&Q_2 (V_2 \otimes V_1) Q_0 = 0,\ Q_2 (V_2 \otimes V_1) Q_1 = 0,\ Q_2 (V_2 \otimes V_1) Q_2 = 0,\\ 
&Q_0 (V_2 \otimes V_1) Q_2 =0,\ Q_1 (V_2\otimes V_1) Q_2=0.
\end{aligned}
$$
These identities follow from $Q_2 V_2 = 0$ and $Q_2^* V_1 = 0$, which in turn follow from the fact that eigenvectors $\phi_k$ are orthogonal to $V$, $\langle \phi_k, V \rangle=0$, for $2 \leq k \leq N$.

We first apply Lemma \ref{lemma_invers} to invert $Q(I+\widehat T(\lambda))Q$ in $Q L^2$ for small $\lambda$, after writing it in matrix form with respect to the decomposition $QL^2 = Q_1 L^2 + Q_2 L^2$:
$$
Q(I+\widehat T(\lambda))Q = \bpm 
Q_1 (I+\widehat T(\lambda)) Q_1 &Q_1 \widehat T(\lambda) Q_2\\
Q_2 \widehat T(\lambda) Q_1 &Q_2 (I+\widehat T(\lambda)) Q_2
\epm := \bpm
T_{11}(\lambda) &T_{12}(\lambda)\\
T_{21}(\lambda) &T_{22}(\lambda)\epm.
$$
The inverse will be given by formula (\ref{invers}), that is
\be\lb{invers'}
(Q(I+\widehat T(\lambda))Q)^{-1} = \bpm T_{11}^{-1} + T_{11}^{-1} T_{12} C_{22}^{-1} T_{21} T_{11}^{-1}&-T_{11}^{-1} T_{12} C_{22}^{-1} \\ 
-C_{22}^{-1} T_{21} T_{11}^{-1} & C_{22}^{-1}
\epm,
\ee
where
$$
C_{22} = T_{22} - T_{21} T_{11}^{-1} T_{12}.
$$
As in the case of exceptional Hamiltonians of the first kind, let
$$
J(\lambda):=\frac{\widehat T(\lambda)- (V_2 R_0(0) V_1 + i \lambda(4\pi)^{-1}V_2 \otimes V_1)}{\lambda^2}.
$$
Then (recall that $Q_1=-V_2\phi_1 \otimes V_1 \phi_1$)
$$\begin{aligned}
T_{11}(\lambda) &= Q_1 (I+\widehat T(\lambda)) Q_1 = Q_1 (I + V_2 R_0((\lambda+i0)^2) V_1) Q_1\\
&=Q_1 (V_2 R_0((\lambda+i0)^2) V_1 - V_2 R_0(0) V_1) Q_1\\
&=V_2 \phi_1 \otimes V\phi_1 (R_0((\lambda+i0)^2) - R_0(0)) V \phi_1 \otimes V_1 \phi_1 \\
&=\Big(\lambda \frac {|\langle V, \phi_1 \rangle|^2}{4i\pi} - \lambda^2 \langle V_1 \phi_1, J(\lambda) V_2 \phi_1 \rangle\Big) Q_1 \\
&:= (\lambda a^{-1} + \lambda^2 c_1(\lambda)) Q_1.
\end{aligned}$$
Here $\ds a=\frac {4i\pi}{|\langle V, \phi_1\rangle|^2} \ne 0$. As in the proof of Lemma \ref{lemma24}, note that $c_1(\lambda) \in \widehat L^1$ when $V \in L^1$ and  $\partial_\lambda c_1(\lambda) \in \widehat L^1$ when $V \in \langle x \rangle^{-1} L^1$.

It follows that $T_{11}(\lambda)$ is invertible for $|\lambda| \ll 1$ in $Q_1 L^2$ and
$$
T_{11}^{-1}(\lambda) = \frac 1 {\lambda a^{-1} + \lambda^2 c_1(\lambda)} Q_1 = \Big(\frac a \lambda - \frac {c_1(\lambda)}{(a^{-1}+\lambda c_1(\lambda))a^{-1}}\Big) Q_1 = \lambda^{-1} a Q_1 + E(\lambda).
$$
Here and below we denote by $E(\lambda)$ various regular terms, i.e.\ with the property that $\chi(\lambda/\epsilon) E(\lambda) \in \W$ for sufficiently small $\epsilon$.

Likewise, since $Q_2 (V_2 \otimes V_1) = (V_2 \otimes V_1) Q_2 = 0$,
$$\begin{aligned}
T_{12}(\lambda) &= Q_1 (I+V_2 R_0((\lambda+i0)^2) V_1) Q_2\\
&=Q_1 V_2\Big(R_0((\lambda+i0)^2) - R_0(0) - i \lambda \frac{1 \otimes 1}{4\pi}\Big) V_1 Q_2\\
&=-\lambda^2 Q_1 \Big(V_2\frac{|x-y|}{8\pi} V_1 + \lambda V_2e_1(\lambda)V_1\Big)Q_2 \\
&=-\lambda^2 Q_1 V_2\frac{|x-y|}{8\pi} V_1 Q_2 + \lambda^3 E(\lambda),
\end{aligned}$$
where
$$
e_1(\lambda) := \frac{\ds R_0((\lambda+i0)^2) - R_0(0) - i \lambda \frac{1 \otimes 1}{4\pi} + \lambda^2 \frac{|x-y|}{8\pi}}{-\lambda^3}.
$$
By Lemma \ref{fourier} $\ds M((e_1(\lambda))^\wedge) = \frac{|x-y|^2}{24\pi}$ and $\ds M((\partial_\lambda e_1(\lambda))^\wedge) = \frac{|x-y|^3}{96\pi}$. Thus $E(\lambda) := Q_1 V_2e_1(\lambda) V_1Q_2 \in \W$ when
$$
\int_{(\R^3)^2} V(x) \phi_1(x) |x-y|^3 V(y) \phi_k(y) \dd x \dd y < \infty,
$$
which takes place when $V \in \langle x \rangle^{-2} L^1$ (recall that $|\phi_1(y)| \les \langle y \rangle^{-1}$).


Likewise we obtain
$$\begin{aligned}
T_{21}(\lambda) &= -\lambda^2 Q_2 V_2 \frac{|x-y|}{8\pi} V_1 Q_1 + \lambda^3 E(\lambda),
\end{aligned}$$
hence, combining the previous results,
$$\begin{aligned} 
T_{21}(\lambda) T_{11}^{-1}(\lambda) T_{12}(\lambda) &= \lambda^3 a Q_2 V_2\frac{|x-y|}{8\pi} V_1 Q_1 V_2 \frac{|x-y|}{8\pi} V_1 Q_2 + E(\lambda).
\end{aligned}$$
Furthermore
$$\begin{aligned}
T_{22}(\lambda) &= Q_2 (I+V_2 R_0((\lambda+i0)^2) V_1) Q_2 \\
&=Q_2 V_2\Big(R_0((\lambda+i0)^2) - R_0(0) - i \lambda \frac{1 \otimes 1}{4\pi}\Big) V_1 Q_2\\
&=-\lambda^2 Q_2 \Big(V_2\frac{|x-y|}{8\pi} V_1 + i \lambda V_2\frac {|x-y|^2}{24\pi} V_1 - \lambda^2 V_2 e_2(\lambda) V_1 \Big)Q_2. 
\end{aligned}$$
Here
$$
e_2(\lambda) := \lambda^{-4}\Big(R_0((\lambda+i0)^2) - R_0(0) - i \lambda \frac{1 \otimes 1}{4\pi} - i \lambda^2 \frac{|x-y|}{8\pi} - \lambda^3\frac{|x-y|^2}{24\pi}\Big).
$$
By Lemma \ref{fourier} $\ds M(e_2(\lambda)^\wedge) = \frac{|x-y|^3}{96\pi}$ and $\ds M((\partial_\lambda e_2(\lambda))^\wedge) = \frac{|x-y|^4}{480\pi}$. Thus $E(\lambda):=Q_2 V_2 e_2(\lambda) V_1 Q_2 \in \W$ when
$$
\int_{(\R^3)^2} V(x) \phi_k(x) |x-y|^4 V(y) \phi_k(y) \dd x \dd y < \infty,
$$
which holds true when $V \in \langle x \rangle^{-2} L^1$ (recall that $|\phi_k(y)| \les \langle y \rangle^{-2}$). Then
\be\lb{m22}
T_{22}(\lambda) = -\lambda^2 Q_2 \Big(V_2\frac{|x-y|}{8\pi} V_1 + i \lambda V_2\frac {|x-y|^2}{24\pi} V_1\Big)Q_2 + \lambda^4 E(\lambda).
\ee

Let $P_0$ be the $L^2$ orthogonal projection onto the set $\mc E$ spanned by $\phi_2, \ldots, \phi_N$. By relation (4.38) of \cite{yajima_disp},
$$
\Big(Q_2 V_2\frac{|x-y|}{8\pi} V_1 Q_2 \Big)^{-1} = -V_2P_0 V_1.
$$
Also note that
$$
V_2 P_0 V_1 Q_2 = Q_2 V_2 P_0 V_1 = V_2 P_0 V_1.
$$
By (\ref{m22}),
$$\begin{aligned}
T_{22}^{-1}(\lambda) &= -\lambda^{-2} \Big( Q_2 V_2 \frac {|x-y|}{8\pi} V_1 Q_2 \Big)^{-1} \sum_{k=0}^\infty (-1)^k \Big(\Big(i\lambda Q_2 V_2 \frac {|x-y|^2}{24\pi} V_1 Q_2 - \lambda^2 E(\lambda)\Big) \\
&\Big( Q_2 V_2 \frac {|x-y|}{8\pi} V_1 Q_2 \Big)^{-1}\Big)^k \\
&=\lambda^{-2} V_2 P_0 V_1 \sum_{k=0}^\infty \Big(\Big(i\lambda V_2 \frac {|x-y|^2}{24\pi} V_1 - \lambda^2 E(\lambda)\Big) V_2 P_0 V_1.
\end{aligned}$$
Therefore, by grouping the terms by the powers of $\lambda$, for $|\lambda| \ll 1$
$$
T_{22}^{-1}(\lambda)=\lambda^{-2} V_2P_0 V_1 + i \lambda^{-1} V_2P_0 V \frac{|x-y|^2}{24\pi} V P_0 V_1 + E(\lambda). 
$$
Then we write
$$\begin{aligned}
C_{22}(\lambda) &= T_{22}(\lambda) - T_{21}(\lambda) T^{-1}_{11}(\lambda) T_{12}(\lambda) \\
&= (I - T_{21}(\lambda) T^{-1}_{11}(\lambda) T_{12}(\lambda)T_{22}^{-1}(\lambda)) T_{22}(\lambda).
\end{aligned}$$
By our previous estimates, $T_{21}(\lambda) T^{-1}_{11}(\lambda) T_{12}(\lambda) T_{22}^{-1}(\lambda) = \lambda E(\lambda)$, $E(\lambda) \in \W$. Then by means of a Neumann series expansion we retrieve that
$$\begin{aligned}
C_{22}^{-1}(\lambda) &= T_{22}^{-1}(\lambda) \sum_{k=0}^\infty (T_{21}(\lambda) T^{-1}_{11}(\lambda) T_{12}(\lambda) T_{22}^{-1}(\lambda))^k \\
&= T_{22}^{-1}(\lambda) + T_{22}^{-1}(\lambda) T_{21}(\lambda) T^{-1}_{11}(\lambda) T_{12}(\lambda) T_{22}^{-1}(\lambda) + E(\lambda),
\end{aligned}$$
so
$$\begin{aligned}
C_{22}^{-1}(\lambda) &= \lambda^{-2}V_2P_0V_1 + i\lambda^{-1} V_2P_0 V \frac{|x-y|^2}{24\pi} V P_0 V_1 + \\
&+a \lambda^{-1} V_2P_0 V \frac{|x-y|}{8\pi} V_1 Q_1 V_2 \frac{|x-y|}{8\pi} V P_0 V_1 + E(\lambda).
\end{aligned}$$
If we set $\ds \tilde \phi_1 = P_0 V \frac {|x-y|}{8\pi} V \phi_1 \in \mc E$, then
$$
V_2 P_0 V \frac{|x-y|}{8\pi} V_1 Q_1 V_2 \frac{|x-y|}{8\pi} V P_0 V_1 = -V_2 \tilde \phi_1 \otimes \tilde \phi_1 V_1.
$$
Then we get that
$$\begin{aligned}
C_{22}^{-1}(\lambda) &= \lambda^{-2}V_2P_0V_1 + i\lambda^{-1} V_2 P_0 V \frac{|x-y|^2}{24\pi} V P_0 V_1 - \lambda^{-1} aV_2 \tilde \phi_1 \otimes \tilde \phi_1V_1 + E(\lambda).
\end{aligned}$$
Furthermore
$$\begin{aligned}
-T_{11}^{-1}(\lambda) T_{12}(\lambda) C_{22}^{-1}(\lambda) &= (\lambda^{-1} a Q_1 + E(\lambda)) \lambda^2 Q_1\Big(V_2\frac{|x-y|}{8\pi}V_1 + \lambda E(\lambda)\Big)Q_2 \\
&\big(\lambda^{-2}V_2P_0V_1 + i\lambda^{-1} E(\lambda)\big)\\
&= \lambda^{-1} a (-V_2 \phi_1 \otimes V_1 \phi_1) V_2\frac{|x-y|}{8\pi} V P_0V_1 + E(\lambda)\\
&= -a\lambda^{-1} V_2\phi_1 \otimes \tilde \phi_1 V_1 + E(\lambda).
\end{aligned}$$
Likewise we obtain
$$\begin{aligned}
-C_{22}^{-1}(\lambda) T_{21}(\lambda) T_{11}^{-1}(\lambda) &= -a\lambda^{-1} V_2 \tilde \phi_1 \otimes \phi_1 V_1 + E(\lambda)\\
T_{11}^{-1}(\lambda) T_{12}(\lambda) C_{22}^{-1}(\lambda) T_{21}(\lambda) T^{-1}_{11}(\lambda) &= E(\lambda).
\end{aligned}$$
By (\ref{invers'}), $(Q(I+\widehat T(\lambda))Q)^{-1}$ is given in matrix form modulo $E(\lambda) \in \W$ by
\be\lb{QQ}
\bpm
-a\lambda^{-1} V_2\phi_1 \otimes V_1 \phi_1 & -a\lambda^{-1} V_2\phi_1\otimes V_1 \tilde \phi_1 \\
-a\lambda^{-1} V_2\tilde \phi_1 \otimes V_1 \phi_1 & \lambda^{-2} V_2P_0 V_1 + i \lambda^{-1} V_2P_0 V \frac{|x-y|^2}{24\pi} V P_0 V_1 - \lambda^{-1} a V_2\tilde \phi_1 \otimes V_1 \tilde \phi_1
\epm.
\ee
Therefore, if we define the canonical resonance as $\phi = \phi_1 - \tilde \phi_1$, $\phi$ satisfies $\phi \in \mc M$ and $\langle \phi, V \rangle = 1$ and
\be\lb{QQ'}
(Q(I+\widehat T(\lambda))Q)^{-1} = \frac{V_2 P_0 V_1} {\lambda^2} + \frac{i V_2 P_0 V \frac{|x-y|^2}{24\pi} V P_0 V_1} {\lambda} - \frac {a V_2\phi \otimes V_1\phi}{\lambda} + E(\lambda).
\ee

We apply Lemma \ref{lemma_invers} again after writing $I+\widehat T(\lambda)$ in matrix form with respect to the decomposition $L^2  = \ov Q L^2 + Q L^2$, where $Q L^2 = V_2 \mc M$:
$$
I + \widehat T(\lambda) = \bpm
\ov Q (I+\widehat T(\lambda)) \ov Q & \ov Q\widehat T(\lambda)Q \\
Q\widehat T(\lambda)\ov Q & Q (I+\widehat T(\lambda)) Q 
\epm := \bpm
S_{00}(\lambda)& S_{01}(\lambda)\\ 
S_{10}(\lambda)& S_{11}(\lambda)
\epm.
$$

Next, let $A(\lambda) := S_{00}(\lambda)^{-1}$. Then $\chi(\lambda/\epsilon)A(\lambda) \in \W$ for sufficiently small $\epsilon$. Indeed, 
it is easy to see that $S_{00}(\lambda) \in \W$. Furthermore, $S_{00}(0)$ is invertible on $\ov Q L^{3/2, 2} \cap \ov Q L^{3, 2}$ of inverse $K$, see (\ref{inverse}).

%
As in the proof of Lemma \ref{lemma24}, let $S_\epsilon(\lambda) = \chi(\lambda/\epsilon)\ov Q(\widehat T(\lambda) - \widehat T(0)) \ov Q$. A simple argument based on condition C1 shows that $\lim_{\epsilon \to 0} \|S_\epsilon(\lambda)\|_{\V_{L^{3/2, 2}} \cap \V_{L^{3, 2}}}=0$. Then
$$\begin{aligned}
\chi(\lambda/\epsilon) S_{00}^{-1}(\lambda) &= \chi(\lambda/\epsilon) \big(S_{00}(0) + \chi(\frac\lambda{2\epsilon}) \ov Q (\widehat T(\lambda) - \widehat T(0)) \ov Q\big)^{-1}\\
&=\chi(\lambda/\epsilon) S_{00}^{-1}(0) \sum_{k=0}^\infty (-1)^k (S_{2\epsilon}(\lambda) S_{00}^{-1}(0))^k.
\end{aligned}$$
This series converges for sufficiently small $\epsilon$, showing that $(\chi(\lambda/\epsilon) S_{00}^{-1}(\lambda))^\vee \in \V_{L^{3/2, 2}} \cap \V_{L^{3, 2}}$.

Concerning the derivative,
$$
\chi(\lambda/\epsilon) \partial_\lambda S_{00}^{-1}(\lambda) = - \chi(\lambda/\epsilon) S_{00}^{-1}(\lambda) \partial_{\lambda} S_{00}(\lambda) \chi(\frac \lambda {2\epsilon}) S_{00}^{-1}(\lambda).
$$
In this expression $(\chi(\lambda/\epsilon) S_{00}^{-1}(\lambda))^\vee \in \V_{L^{3/2, 2}} \cap \V_{L^{3, 2}}$ and $(\chi(\frac \lambda {2\epsilon}) \partial_{\lambda} S_{00}(\lambda))^\vee \in \V_{L^{3/2, 2}, L^{3, 2}}$ since $\ds M((\partial_{\lambda} T_{00}(\lambda))^\vee) = \frac{|V_2| \otimes |V_1|}{4\pi}$. Thus $(\chi(\lambda/\epsilon) \partial_\lambda S_{00}^{-1}(\lambda))^\vee \in \V_{L^{3/2, 2}, L^{3, 2}}$.

From this we infer that $\chi(\lambda/\epsilon) A(\lambda) \in \W$, so $A$ is a regular term.

We compute the inverse of $I + \widehat T(\lambda)$ by finding each of its matrix elements:
\be\lb{matrix}
(I+\widehat T(\lambda))^{-1} = \bpm A + A S_{01} C^{-1} S_{10} A & A S_{01} C^{-1} \\
-C^{-1} S_{10} A & C^{-1} \epm.
\ee
Here
$$
C(\lambda) = S_{11}(\lambda) - S_{10}(\lambda) A(\lambda) S_{01}(\lambda).
$$
$S_{10}(\lambda) A(\lambda) S_{01}(\lambda) = Q\widehat T(\lambda)A(\lambda)\widehat T(\lambda)Q$ may be written as 
\be\lb{expresie''}
\bpm
Q_1 \widehat T(\lambda)A(\lambda)\widehat T(\lambda)Q_1 &Q_1 \widehat T(\lambda)A(\lambda)\widehat T(\lambda)Q_2 \\
Q_2 \widehat T(\lambda)A(\lambda)\widehat T(\lambda)Q_1 &Q_2 \widehat T(\lambda)A(\lambda)\widehat T(\lambda)Q_2
\epm =
\bpm 
\lambda^2 E_{11}(\lambda) &\lambda^3 E_{12}(\lambda) \\
\lambda^3 E_{21} (\lambda) &\lambda^4 E_{22} (\lambda)
\epm.
\ee
Indeed, consider for example $Q_2 \widehat T(\lambda)A(\lambda)\widehat T(\lambda)Q_2$. It can be reexpressed as
\be\lb{expresie}\begin{aligned}
Q_2 \widehat T(\lambda)A(\lambda)\widehat T(\lambda)Q_2 =\lambda^4 Q_2 V_2 \frac {R_0((\lambda+i0)^2) - R_0(0) - i\lambda \frac {1\otimes 1}{4\pi}}{\lambda^2} V_1 A(\lambda) \\
V_2 \frac {{R_0((\lambda+i0)^2) - R_0(0) - i\lambda \frac {1\otimes 1}{4\pi}}} {\lambda^2} V_1 Q_2.
\end{aligned}\ee
For this computation we assume that $V \in \langle x \rangle^{-4} L^{3/2, 1}$. Taking a derivative of (\ref{expresie}) we obtain terms such as
\be\lb{expresie'}
Q_2 V_2 \partial_{\lambda} \Big(\frac {R_0((\lambda+i0)^2) - R_0(0) - i\lambda \frac {1\otimes 1}{4\pi}}{\lambda^2}\Big) V_1 A(\lambda) V_2 \frac {{R_0((\lambda+i0)^2) - R_0(0) - i\lambda \frac {1\otimes 1}{4\pi}}} {\lambda^2} V_1 Q_2.
\ee
Note that the range of $Q_2$ is spanned by functions $V_2 \phi_k$, $2 \leq k \leq N$, such that $|\phi_k(y)| \les \langle y \rangle^{-2}$ and $V_2 \in \langle x \rangle^{-2} L^{3, 2}$, so $V_2 \phi \in \langle y \rangle^{-4} L^{3, 2}$. Also
$$
M((V_2 \partial_{\lambda} \Big(\frac {R_0((\lambda+i0)^2) - R_0(0) - i\lambda \frac {1\otimes 1}{4\pi}}{\lambda^2}\Big) V_1)^\wedge) = |V_2| \frac{|x-y|^2}{24\pi} |V_1| \in \B(L^{3/2, 2}, L^{3, 2}).
$$
Likewise
$$
M((V_2 \frac {R_0((\lambda+i0)^2) - R_0(0) - i\lambda \frac {1\otimes 1}{4\pi}}{\lambda^2} V_1)^\wedge) = |V_2| \frac{|x-y|}{8\pi} |V_1| \in \B(L^{3/2, 2}, L^{3/2, 2}).
$$
This shows that $(\ref{expresie'}) \in \V_{L^{3/2, 2}, L^{3, 2}}$. By such computations we obtain that $Q_2 \widehat T(\lambda) A(\lambda) \widehat T(\lambda) Q_2 = \lambda^4 E_{22}(\lambda)$, where $\chi(\lambda/\epsilon) E_{22}(\lambda)\in\W$ for sufficiently small $\epsilon$. In this manner we prove (\ref{expresie''}).

By (\ref{QQ}), $S_{11}^{-1}(\lambda) = (Q\widehat T(\lambda)Q)^{-1}$ is of the form
$$
S_{11}^{-1}(\lambda) = \bpm
\lambda^{-1} E(\lambda) &\lambda^{-1} E(\lambda)\\
\lambda^{-1} E(\lambda) &\lambda^{-2} E(\lambda)
\epm.
$$
Then, letting $N(\lambda) := S_{11}^{-1}(\lambda) S_{10}(\lambda) A(\lambda) S_{01}(\lambda)$, by (\ref{expresie''})
$$\begin{aligned}
N(\lambda) &:= S_{11}^{-1}(\lambda) S_{10}(\lambda) S^{-1}_{00}(\lambda) S_{01}(\lambda) \\
&= \bpm
\lambda^{-1} E(\lambda) &\lambda^{-1} E(\lambda)\\
\lambda^{-1} E(\lambda) &\lambda^{-2} E(\lambda)
\epm
\bpm 
\lambda^2 E_{11}(\lambda) &\lambda^3 E_{12}(\lambda) \\
\lambda^3 E_{21} (\lambda) &\lambda^4 E_{22} (\lambda)
\epm\\
&= \bpm
\lambda E(\lambda) & \lambda^2 E(\lambda) \\
\lambda E(\lambda) & \lambda^2 E(\lambda)
\epm. 
\end{aligned}$$
This shows that $C(\lambda)$ is invertible for $\lambda \ll 1$:
$$
C(\lambda) = S_{11}(\lambda) - S_{10}(\lambda) A(\lambda) S_{01}(\lambda) = S_{11}(\lambda)(1 - N(\lambda)),
$$
so
\be\lb{C1}\begin{aligned}
C^{-1}(\lambda) &= (I - N(\lambda))^{-1} S_{11}^{-1}(\lambda) \\
&= S_{11}^{-1}(\lambda) + (I-N(\lambda))^{-1} N (\lambda) S_{11}^{-1}(\lambda).
\end{aligned}\ee
A computation shows that $(I-N(\lambda))^{-1} N (\lambda) S_{11}^{-1}(\lambda)$ is a regular term:
$$\begin{aligned}
(1-N(\lambda))^{-1} N (\lambda) S_{11}^{-1}(\lambda) &= E(\lambda) \bpm
\lambda E(\lambda) & \lambda^2 E(\lambda) \\
\lambda E(\lambda) & \lambda^2 E(\lambda)
\epm
\bpm
\lambda^{-1} E(\lambda) &\lambda^{-1} E(\lambda)\\
\lambda^{-1} E(\lambda) &\lambda^{-2} E(\lambda)
\epm \\
&= E(\lambda).
\end{aligned}$$
By (\ref{C1}) and (\ref{QQ'})
$$\begin{aligned}
C^{-1}(\lambda) &= S_{11}^{-1}(\lambda) + E(\lambda)\\
&= \lambda^{-2} V_2 P_0 V_1 + i \lambda^{-1} V_2 P_0 V \frac{|x-y|^2}{24} V P_0 V_1 - a \lambda^{-1} V_2 \phi \otimes V_1 \phi + E(\lambda).
\end{aligned}$$
One can then also write $C^{-1}$ as
$$
C^{-1}(\lambda) = \bpm
\lambda^{-1} E(\lambda) &\lambda^{-1} E(\lambda)\\
\lambda^{-1} E(\lambda) &\lambda^{-2} E(\lambda)
\epm.
$$
We also have
$$
S_{01}(\lambda) = \ov Q(I+\widehat T(\lambda))Q = \lambda E_1(\lambda) Q_1 + \lambda^2 E_2(\lambda) Q_2
$$
with regular terms $E_1, E_2 \in \W$:
$$\begin{aligned}
E_1(\lambda) &:= \ov Q V_2 \frac{R_0((\lambda+i0)^2)-R_0(0)} \lambda V_1 Q_1,\\
E_2(\lambda) &:= \ov Q V_2 \frac{R_0((\lambda+i0)^2)-R_0(0)-i\lambda 1 \otimes 1}{\lambda^2} V_1 Q_2.
\end{aligned}$$
Showing that $E_1$, $E_2 \in \W$ requires assuming that $V \in \langle x \rangle^{-4} L^{3/2, 1}$.

Therefore the following matrix element of (\ref{matrix}) is regular near zero:
$$\begin{aligned}
&A(\lambda)S_{01}(\lambda) C^{-1}(\lambda) =\\
&= (\lambda A(\lambda) E_1(\lambda)\ \lambda^2 A(\lambda) E_2(\lambda)) 
\bpm
\lambda^{-1} E(\lambda) &\lambda^{-1} E(\lambda) \\
\lambda^{-1} E(\lambda) &\lambda^{-2} E(\lambda)
\epm = E(\lambda).
\end{aligned}$$
One shows in the same manner that the matrix element $C^{-1}(\lambda) S_{10}(\lambda) A(\lambda)$ of (\ref{matrix}) is regular near zero.

Finally, the last remaining matrix element $A + A S_{01} C^{-1} S_{10} A$ of (\ref{matrix}) consists of the regular part $A$ and
$$\begin{aligned}
A S_{01} C^{-1} S_{10} A &= E(\lambda) (\lambda E(\lambda)\ \lambda^2 E(\lambda)) \bpm
\lambda^{-1} E(\lambda) &\lambda^{-1} E(\lambda) \\
\lambda^{-1} E(\lambda) &\lambda^{-2} E(\lambda)
\epm \bpm \lambda E(\lambda) \\ \lambda^2 E(\lambda) \epm
E(\lambda)\\
&= \lambda E(\lambda).
\end{aligned}$$
Thus this is also a regular term. It follows by (\ref{matrix}) that $\widehat T(\lambda)^{-1}$ is up to regular terms given by
$$
\lambda^{-2} V_2 P_0 V_1 + i \lambda^{-1} V_2 P_0 V \frac{|x-y|^2}{24} V P_0 V_1 - a \lambda^{-1} V_2 \phi \otimes V_1 \phi,
$$
which was to be shown.
\end{proof}

We next prove a corresponding statement in the case when $V$ has an almost minimal amount of decay. One can also obtain a resolvent expansion when $V \in \langle x \rangle^{-1} L^{3/2, 1}$, but that one does not lead to decay estimates.

\begin{lemma}\lb{lemma217} Suppose that $V \in \langle x \rangle^{-2} L^{3/2, 1}$ and $H=-\Delta+V$ is an exceptional Hamiltonian of the third kind. Let $\chi$ be a standard cutoff function. Then for sufficiently small $\epsilon$
$$\begin{aligned}
\chi(\lambda/\epsilon) (I + \widehat T(\lambda))^{-1} &= L(\lambda) + \lambda^{-1} S(\lambda) + \lambda^{-2}V_2 P_0 V_1,
\end{aligned}$$
where $L(\lambda) \in \W$, $S(\lambda)^\vee \in \V_{L^{3, 2}, L^{3/2, 2}}$, and $P_0$ is the $L^2$ orthogonal projection on $\mc E$.

Furthermore, $0$ is an isolated exceptional point, so $H$ has finitely many negative eigenvalues.
\end{lemma}
\begin{proof}[Proof of Lemma \ref{lemma217}]
We study $(I+\widehat T(\lambda))^{-1} := (I + V_2R_0((\lambda+i0)^2)V_1)^{-1}$ near $\lambda = 0$.

Let $Q=Q_1+Q_2$, $Q_0 = \ov Q$, $Q_1$, and $Q_2$ be as in the proof of Lemma \ref{lemma28}.

Also take again the orthonormal basis $\{\phi_1 , . . . , \phi_N\}$ with respect to the inner product $-(V u, v)$ for $\mc M$ so that $\{\phi_2 , \ldots , \phi_N\}$ is a basis of $\mc E$ and $\langle \phi_1 , V\rangle > 0$.
%

We apply Lemma \ref{lemma_invers} to invert $Q(I+\widehat T(\lambda))Q$ in $Q L^2$ for small $\lambda$, after writing it in matrix form with respect to the decomposition $QL^2 = Q_1 L^2 + Q_2 L^2$:
$$
Q(I+\widehat T(\lambda))Q = \bpm 
Q_1 (I+\widehat T(\lambda)) Q_1 &Q_1 \widehat T(\lambda) Q_2\\
Q_2 \widehat T(\lambda) Q_1 &Q_2 (I+\widehat T(\lambda)) Q_2
\epm := \bpm
T_{11}(\lambda) &T_{12}(\lambda)\\
T_{21}(\lambda) &T_{22}(\lambda)\epm.
$$
The inverse will be given by formula (\ref{invers}), that is
\be\lb{invers''}
(Q(I+\widehat T(\lambda))Q)^{-1} = \bpm T_{11}^{-1} + T_{11}^{-1} T_{12} C_{22}^{-1} T_{21} T_{11}^{-1}&-T_{11}^{-1} T_{12} C_{22}^{-1} \\ 
-C_{22}^{-1} T_{21} T_{11}^{-1} & C_{22}^{-1}
\epm,
\ee
where
$$
C_{22} = T_{22} - T_{21} T_{11}^{-1} T_{12}.
$$
Then (recall that $Q_1=-V_2\phi_1 \otimes V_1 \phi_1$)
$$\begin{aligned}
T_{11}(\lambda) &= Q_1 (I+\widehat T(\lambda)) Q_1 = Q_1 (I + V_2 R_0((\lambda+i0)^2) V_1) Q_1\\
&=Q_1 (V_2 R_0((\lambda+i0)^2) V_1 - V_2 R_0(0) V_1) Q_1\\
&=V_2 \phi_1 \otimes V\phi_1 (R_0((\lambda+i0)^2) - R_0(0)) V \phi_1 \otimes V_1 \phi_1 \\
&:= \lambda c_0(\lambda) Q_1.
\end{aligned}$$
Here $c_0(0)=\ds a=\frac {4i\pi}{|\langle V, \phi_1\rangle|^2} \ne 0$. Note that $c_0(\lambda) \in \widehat L^1$ when
$$
\int_{\R^3} \int_{\R^3} V(x) \phi_1(x)V(y) \phi_1(y) \Big\|\frac {e^{i\lambda|x-y|} - 1}{\lambda|x-y|}\Big\|_{\widehat L^1_\lambda} \dd x \dd y < \infty.
$$
Since $\Big\|\frac {e^{i\lambda|x-y|} - 1}{\lambda|x-y|}\Big\|_{\widehat L^1_\lambda} = 1$, it is enough to assume that $V \phi_1 \in L^1$, i.e.\ that $V \in L^{3/2, 1}$, in view of the fact that $\phi_1 \in \langle x \rangle^{-1} L^\infty$.

It follows that $T_{11}(\lambda)$ is invertible for $|\lambda| \ll 1$ in $Q_1 L^2$ and
$$
T_{11}^{-1}(\lambda) = \lambda^{-1} c_0^{-1} (\lambda) Q_1 = \lambda^{-1} E(\lambda).
$$
Here $\chi(\lambda/\epsilon) c_0^{-1}(\lambda) \in \widehat L^1$ for sufficiently small $\epsilon$.

Likewise, since $Q_2 (V_2 \otimes V_1) = (V_2 \otimes V_1) Q_2 = 0$,
$$\begin{aligned}
T_{12}(\lambda) &= Q_1 (I+V_2 R_0((\lambda+i0)^2) V_1) Q_2\\
&= \lambda^2 Q_1 V_2 \frac{R_0((\lambda+i0)^2) - R_0(0) - i \lambda (4\pi)^{-1} 1 \otimes 1}{\lambda^2} 
V_1 Q_2\\
&=\lambda^2 Q_1 e(\lambda) Q_2.
\end{aligned}$$
Since by Lemma \ref{fourier}
$$
M\Big(\Big(\frac{R_0((\lambda+i0)^2) - R_0(0) - i \lambda (4\pi)^{-1} 1 \otimes 1}{\lambda^2}\Big)^\wedge\Big) = \frac {|x-y|}{8\pi},
$$
it follows that $e(\lambda) \in \widehat L^1$ if
$$
\int_{\R^3} \int_{\R^3} V(x) \phi_1(x) V(y) \phi_k(y) |x-y| < \infty,
$$
that is if $V \in L^1$.


Likewise we obtain
$$\begin{aligned}
T_{21}(\lambda) &= \lambda^2 Q_2 e(\lambda) Q_1,
\end{aligned}$$
hence, combining the previous results,
$$\begin{aligned} 
T_{21}(\lambda) T_{11}^{-1}(\lambda) T_{12}(\lambda) &= \lambda^3 Q_2 e(\lambda) Q_2.
\end{aligned}$$
Furthermore
$$\begin{aligned}
T_{22}(\lambda) &= Q_2 (I+V_2 R_0((\lambda+i0)^2) V_1) Q_2 \\
&= \lambda^2 Q_2 V_2\frac{R_0((\lambda+i0)^2) - R_0(0) - i \lambda \frac{1 \otimes 1}{4\pi}}{\lambda^2} V_1 Q_2\\
&=-\lambda^2 (Q_2 V_2\frac{|x-y|}{8\pi} V_1 Q_2 + \lambda Q_2 e(\lambda) Q_2).
\end{aligned}$$
Again by Lemma \ref{fourier}, $e(\lambda) \in \widehat L^1$ if
$$
\int_{\R^3} \int_{\R^3} V(x) \phi_k(x) V(y) \phi_\ell(y) |x-y|^2 < \infty,
$$
that is (taking into account that $\phi_k$, $\phi_\ell \les \langle x \rangle^{-2}$) if $V \in L^1$.

Let $P_0$ be the $L^2$ orthogonal projection onto the set $\mc E$ spanned by $\phi_2, \ldots, \phi_N$. By relation (4.38) of \cite{yajima_disp},
$$
\Big(Q_2 V_2\frac{|x-y|}{8\pi} V_1 Q_2 \Big)^{-1} = -V_2P_0 V_1.
$$
Then
$$\begin{aligned}
C_{22}(\lambda) &= T_{22}(\lambda) - T_{21}(\lambda) T^{-1}_{11}(\lambda) T_{12}(\lambda) \\
&= -\lambda^2 Q_2 V_2\frac{|x-y|}{8\pi} V_1 Q_2 + \lambda^3 Q_2 e(\lambda) Q_2.
\end{aligned}$$
Therefore
$$
C_{22}^{-1}(\lambda) = \lambda^{-2} V_2 P_0 V_1 + \lambda^{-1} Q_2 e(\lambda) Q_2.
$$
Furthermore, we then obtain that
$$\begin{aligned}
-T_{11}^{-1}(\lambda) T_{12}(\lambda) C_{22}^{-1}(\lambda) &= \lambda^{-1} Q_1 e(\lambda) Q_1 \lambda^2 Q_1 e(\lambda) Q_2 \lambda^{-2} Q_2 e(\lambda) Q_2 \\
&= \lambda^{-1} Q_1 e(\lambda) Q_2.
\end{aligned}$$
Likewise we obtain
$$\begin{aligned}
-C_{22}^{-1}(\lambda) T_{21}(\lambda) T_{11}^{-1}(\lambda) &= \lambda^{-1} Q_2 e(\lambda) Q_1,\\
T_{11}^{-1}(\lambda) T_{12}(\lambda) C_{22}^{-1}(\lambda) T_{21}(\lambda) T^{-1}_{11}(\lambda) &= Q_1 e(\lambda) Q_1.
\end{aligned}$$
By (\ref{invers''}), $(Q(I+\widehat T(\lambda))Q)^{-1}$ is given in matrix form by
\be\begin{aligned}
\lb{QQQ}
(Q(I+\widehat T(\lambda))Q)^{-1} &= \bpm
\lambda^{-1} Q_1 e(\lambda) Q_1 & \lambda^{-1} Q_1 e(\lambda) Q_2 \\
\lambda^{-1} Q_2 e(\lambda) Q_1 & \lambda^{-2} V_2P_0 V_1 + \lambda^{-1} Q_2 e(\lambda) Q_2 \\
&= \lambda^{-1} Q e(\lambda) Q + \lambda^{-2} V_2 P_0 V_1.
\epm,
\end{aligned}\ee
where $\chi(\lambda/\epsilon) e(\lambda) \in \widehat L^1$ for sufficiently small $\epsilon$.

We apply Lemma \ref{lemma_invers} again after writing $I+\widehat T(\lambda)$ in matrix form with respect to the decomposition $L^2  = \ov Q L^2 + Q L^2$, where $Q L^2 = V_2 \mc M$:
$$
I + \widehat T(\lambda) = \bpm
\ov Q (I+\widehat T(\lambda)) \ov Q & \ov Q\widehat T(\lambda)Q \\
Q\widehat T(\lambda)\ov Q & Q (I+\widehat T(\lambda)) Q 
\epm := \bpm
S_{00}(\lambda)& S_{01}(\lambda)\\ 
S_{10}(\lambda)& S_{11}(\lambda)
\epm.
$$

Next, as in the proof of Lemma \ref{lemma28}, let $A(\lambda)=S_{00}^{-1}(\lambda)$. Then $\chi(\lambda/\epsilon)A(\lambda) \in \W$ for sufficiently small $\epsilon$.

We compute the inverse of $I + \widehat T(\lambda)$ by finding each of its matrix elements:
\be\lb{matrix2}
(I+\widehat T(\lambda))^{-1} = \bpm A + A S_{01} C^{-1} S_{10} A & A S_{01} C^{-1} \\
-C^{-1} S_{10} A & C^{-1} \epm.
\ee
Here
$$
C(\lambda) = S_{11}(\lambda) - S_{10}(\lambda) A(\lambda) S_{01}(\lambda).
$$
$S_{10}(\lambda) A(\lambda) S_{01}(\lambda) = Q\widehat T(\lambda)A(\lambda)\widehat T(\lambda)Q$ may be written as 
\be\lb{expresie''2}
\bpm
Q_1 \widehat T(\lambda)A(\lambda)\widehat T(\lambda)Q_1 &Q_1 \widehat T(\lambda)A(\lambda)\widehat T(\lambda)Q_2 \\
Q_2 \widehat T(\lambda)A(\lambda)\widehat T(\lambda)Q_1 &Q_2 \widehat T(\lambda)A(\lambda)\widehat T(\lambda)Q_2
\epm =
\bpm 
\lambda^2 Q_1 e(\lambda) Q_1 &\lambda^3 Q_1 e(\lambda) Q_2\\
\lambda^3 Q_2 e(\lambda) Q_1&\lambda^3 Q_2 e(\lambda) Q_2
\epm,
\ee
where $e(\lambda) \in \widehat L^1$.

Indeed, consider for example $Q_2 \widehat T(\lambda)A(\lambda)\widehat T(\lambda)Q_2$. It can be rewritten as
\be\lb{expresie2}\begin{aligned}
Q_2 \widehat T(\lambda)A(\lambda)\widehat T(\lambda)Q_2 =\lambda^3 Q_2 V_2 \frac {R_0((\lambda+i0)^2) - R_0(0) - i\lambda 1 \otimes 1}{\lambda} V_1 A(\lambda) \\
V_2 \frac {R_0((\lambda+i0)^2) - R_0(0)} {\lambda} V_1 Q_2.
\end{aligned}\ee
Assuming that $V \in \langle x \rangle^{-2} L^{3/2, 1}$
$$
M((V_2 \frac {R_0((\lambda+i0)^2) - R_0(0)}{\lambda} V_1)^\wedge) = \frac {|V_2| \otimes |V_1|}{4\pi} \in \B(L^{3/2, 2}).
$$
Likewise
$$
M((V_2 \frac {R_0((\lambda+i0)^2) - R_0(0) - i\lambda \frac {1\otimes 1}{4\pi}}{\lambda^2} V_1)^\wedge) = |V_2| \frac{|x-y|}{8\pi} |V_1| \in \B(L^{3/2, 2}, L^{3, 2}).
$$
This implies that $(\ref{expresie2}) = \lambda^3 Q_2 e_0(\lambda) Q_2$, $e_0(\lambda) \in \widehat L^1$. 
In this manner we prove (\ref{expresie''2}).


By (\ref{QQQ}), $S_{11}^{-1}(\lambda) = (Q\widehat T(\lambda)Q)^{-1}$ is of the form
$$
S_{11}^{-1}(\lambda) = \bpm
\lambda^{-1} Q_1 e(\lambda) Q_1 &\lambda^{-1} Q_1 e(\lambda) Q_2\\
\lambda^{-1} Q_2 e(\lambda) Q_1 &\lambda^{-2} Q_2 e(\lambda) Q_2
\epm.
$$
Then, letting $N(\lambda) := S_{11}^{-1}(\lambda) S_{10}(\lambda) A(\lambda) S_{01}(\lambda)$, by (\ref{expresie''2})
$$\begin{aligned}
N(\lambda) &:= S_{11}^{-1}(\lambda) S_{10}(\lambda) S^{-1}_{00}(\lambda) S_{01}(\lambda) \\
&= \bpm
\lambda^{-1} Q_1 e(\lambda) Q_1 &\lambda^{-1} Q_1 e(\lambda) Q_2\\
\lambda^{-1} Q_2 e(\lambda) Q_1 &\lambda^{-2} Q_2 e(\lambda) Q_2
\epm
\bpm 
\lambda^2 Q_1 e(\lambda) Q_1 &\lambda^3 Q_1 e(\lambda) Q_2\\
\lambda^3 Q_2 e(\lambda) Q_1 &\lambda^3 Q_2 e (\lambda) Q_2
\epm\\
&= \bpm
\lambda Q_1 e(\lambda) Q_1 & \lambda^2 Q_1 e(\lambda) Q_2 \\
\lambda Q_2 e(\lambda) Q_1 & \lambda Q_2 e(\lambda) Q_2
\epm. 
\end{aligned}$$
Therefore $N(0)=0$. This shows that $C(\lambda)$ is invertible for $\lambda \ll 1$:
$$
C(\lambda) = S_{11}(\lambda) - S_{10}(\lambda) A(\lambda) S_{01}(\lambda) = S_{11}(\lambda)(I - N(\lambda)),
$$
so
\be\lb{C2}\begin{aligned}
C^{-1}(\lambda) &= (I - N(\lambda))^{-1} S_{11}^{-1}(\lambda) \\
&= S_{11}^{-1}(\lambda) + (I-N(\lambda))^{-1} N (\lambda) S_{11}^{-1}(\lambda).
\end{aligned}\ee
A computation shows that
$$\begin{aligned}
(I-N(\lambda))^{-1} N (\lambda) S_{11}^{-1}(\lambda) &= Q e(\lambda) Q
\bpm
\lambda Q_1 e(\lambda) Q_1 & \lambda^2 Q_1 e(\lambda) Q_2 \\
\lambda Q_2 e(\lambda) Q_1 & \lambda Q_2 e(\lambda) Q_2
\epm \\
&\bpm
\lambda^{-1} Q_1 e(\lambda) Q_1 &\lambda^{-1} Q_1 e(\lambda) Q_2 \\
\lambda^{-1} Q_2 e(\lambda) Q_1 &\lambda^{-2} Q_2 e(\lambda) Q_2
\epm \\
&= \bpm
Q_1 e(\lambda) Q_1 & Q_1 e(\lambda) Q_2 \\
Q_2 e(\lambda) Q_1 & \lambda^{-1} Q_2 e(\lambda) Q_2
\epm.
\end{aligned}$$
By (\ref{C2}) and (\ref{QQQ})
$$\begin{aligned}
C^{-1}(\lambda) &= S_{11}^{-1}(\lambda) + \lambda^{-1} Q e(\lambda) Q\\
&= \lambda^{-2} V_2 P_0 V_1 + \lambda^{-1} Q e(\lambda) Q.
\end{aligned}$$
Note that
$$\begin{aligned}
S_{01}(\lambda)&=\ov Q \widehat T(\lambda) Q = \ov Q(I+\widehat T(\lambda)) Q \\
&= \lambda \ov Q V_2 \frac {R_0((\lambda+i0)^2)-R_0(0)} \lambda V_1 Q = \lambda E_1(\lambda),
\end{aligned}$$
where $E_1(\lambda)^\vee \in \V_{L^{3/2, 2}}$ when $V \in \langle x \rangle^{-1} L^1$.
Therefore
$$
A(\lambda) S_{01}(\lambda) C^{-1}(\lambda) = A(\lambda) \lambda  E_1(\lambda) \lambda^{-2} Q e(\lambda) Q = \lambda^{-1} S(\lambda),
$$
where $S(\lambda)^\vee \in \V_{L^{3, 2}, L^{3/2, 2}}$. Likewise $S_{10}(\lambda) = \lambda E_2(\lambda)$, where $E_2(\lambda)^\vee \in \V_{L^{3, 2}}$. Then
$$
C^{-1}(\lambda) S_{10}(\lambda) A(\lambda) = \lambda^{-1} S(\lambda),
$$
where $S(\lambda)^\vee \in \V_{L^{3, 2}, L^{3/2, 2}}$.

Finally, for the last remaining matrix element $A + A S_{01} C^{-1} S_{10} A$ of (\ref{matrix2}) we use the fact that
$$
A S_{01} C^{-1} S_{10} A = A(\lambda) \lambda E_1(\lambda) \lambda^{-2} Q e(\lambda) Q \lambda E_2(\lambda) A(\lambda) = S(\lambda),
$$
where $S(\lambda)^\vee \in \V_{L^{3, 2}, L^{3/2, 2}}$. 
%
Also recall that $A(\lambda) \in \W$.

We have thus analyzed all the terms in (\ref{matrix2}) and the conclusion follows.
\end{proof}

Recall that
$$\begin{aligned}
&R(t) := \frac{ae^{-i\frac{3\pi}4}}{\sqrt{\pi t}} \zeta_t(x) \otimes \zeta_t(y),\ \zeta_t(x) := e^{i|x|^2/4t} \phi(x),\\
&S(t) := \frac{e^{-i\frac{3\pi}4}}{\sqrt{\pi t}}\Big(-i P_0 V \frac{|x-y|^2}{24\pi} V P_0 + \mu_t(x) \frac{|x-y|}{8\pi} V P_0 + P_0 V \frac{|x-y|}{8\pi} \mu_t(y)\Big),
\end{aligned}$$
where $\mu_t(x) := \frac i {|x|} \int_0^1 (e^{i\frac{|x|^2}{4t}}-e^{i\frac{|\theta x|^2}{4t}}) \dd \theta$.

Although it is not immediately obvious, it is also true that
\be\lb{s_bound}
\|S(t)u\|_{L^{3, \infty}} \les t^{-1/2} \|u\|_{L^{3/2, 1}}.
\ee
Indeed, note that since $\langle \phi_k, V \rangle=0$ for the eigenvectors $\phi_k$, $2 \leq k \leq N$ (recall that $\phi_1$ is the resonance),
$$
\mu_t(x) |x-y| V P_0 = \mu_t(x) (|x-y| - |x|) V P_0,
$$
which is bounded in absolute value by $\sum_{k=2}^N |\mu_t(x)| \int_{\R^3} |y| |V(y)| |\phi_k(y)| \dd y \otimes |\phi_k(z)|$. By definition, $|\mu_t(x)| \les |x|^{-1}$. This leads to (\ref{s_bound}), since $\phi_k \in \langle x \rangle^{-2} L^\infty$ and $V \in L^{3/2, 1}$.

We use Lemma \ref{lemma28} as the basis for the following decay estimate:
\begin{proposition}\lb{prop_3} Let V satisfy $\langle x \rangle^4 V(x) \in L^{3/2, 1}$. Suppose that $H$ is of exceptional type of the third kind. Then, for $1 \leq p < 3/2$ and $u \in L^2 \cap L^p$, 
\be\lb{concluzie}
e^{-itH} P_cu=Z(t)u+R(t)u+S(t)u,\ \|Z(t) u\|_{L^{p'}} \les t^{-\frac 3 2(\frac 1 p - \frac 1 {p'})} \|u\|_{L^p}.
\ee
Here $\frac 1 p + \frac 1 {p'} = 1$. If in addition all the zero energy eigenfunctions $\phi_k$, $2 \leq k \leq N$ are in $L^1$, then we can take $S(t)=0$.
\end{proposition}
\begin{proof}[Proof of Proposition \ref{prop_3}] Write the dispersive component of the evolution as
$$
e^{itH}P_c f = \frac 1 {i\pi} \int_\R e^{it\lambda^2} \big(R_0((\lambda+i0)^2) - R_0((\lambda+i0)^2) V_1 \widehat T(\lambda)^{-1}  V_2 R_0((\lambda+i0)^2)\big) f\, \lambda \dd \lambda.
$$
We use the same method as in the proofs of Proposition \ref{prop28} and \ref{prop29}. Consider a partition of unity subordinated to the neighborhoods of Lemmas \ref{lemma23} and \ref{lemma28}. First, following Lemma \ref{lemma23}, take a sufficiently large $R$ such that $(1-\chi(\lambda/R)) (I+\widehat T(\lambda))^{-1} \in \W$. Then, again by Lemma \ref{lemma23}, for every $\lambda_0 \in [-4R, 4R]$ there exists $\epsilon(\lambda_0)>0$ such that $\chi(\frac{\lambda-\lambda_0}{\epsilon(\lambda_0)}(I+\widehat T(\lambda))^{-1} \in \W$ if $\lambda_0\ne 0$ or the conclusion of Lemma \ref{lemma28} holds when $\lambda_0=0$.

Since $[-4R, 4R]$ is a compact set, there exists a finite covering $[-4R, 4R] \subset \bigcup_{k=1}^N (\lambda_k - \epsilon(\lambda_k), \lambda_k + \epsilon(\lambda_k))$. Then we construct a finite partition of unity on $\R$ by smooth functions $1= \chi_0(\lambda) + \sum_{k=1}^N \chi_k(\lambda) + \chi_\infty(\lambda)$, where $\supp \chi_\infty \subset \R \setminus (-2R, 2R)$, $\supp \chi_0 \subset [-\epsilon(0), \epsilon(0)]$, and $\supp \chi_k \subset [\lambda_k-\epsilon(\lambda_k), \lambda_k+\epsilon(\lambda_k)]$.

By Lemma \ref{lemma23}, for any $k \ne 0$, $\chi_k(\lambda) (I+\widehat T (\lambda))^{-1} \in \W$, so $(1-\chi_0(\lambda)) (I+\widehat T(\lambda))^{-1} \in \W$. By Lemma \ref{lemma28}, for $L \in \W$
$$
\chi_0(\lambda)(I + \widehat T(\lambda))^{-1} = L(\lambda) + \chi_0(\lambda) \Big(\frac{V_2 P_0 V_1}{\lambda^2} + \frac{i V_2 P_0 V |x-y|^2 V P_0 V_1}{\lambda} - \frac a {\lambda} V_2 \phi \otimes V_1\phi\Big).
$$
Let $Z_1$ be the contribution of all the regular terms in this decomposition, such as the free resolvent, $(1-\chi_0(\lambda))(I+\widehat T(\lambda))^{-1}$, and $L(\lambda)$:
$$\begin{aligned}
Z_1(t)&:=\frac 1 {i\pi} \int_\R e^{-it\lambda^2} \big( R_0((\lambda+i0)^2) - R_0((\lambda+i0)^2) V_1 L(\lambda) V_2 R_0((\lambda+i0)^2)-\\
&-(1-\chi_0(\lambda))R_0((\lambda+i0)^2) V_1 \widehat T(\lambda) V_2 R_0((\lambda+i0)^2) \big) \lambda \dd\lambda \\
&=\frac 1 {2\pi t} \int_\R e^{-it\lambda^2} \partial_\lambda \big( R_0((\lambda+i0)^2) - R_0((\lambda+i0)^2) V_1 L(\lambda) V_2 R_0((\lambda+i0)^2) - \\
&-(1-\chi_0(\lambda))R_0((\lambda+i0)^2) V_1 \widehat T(\lambda) V_2 R_0((\lambda+i0)^2)\big) \dd \lambda \\
&=\frac C {t^{3/2}} \int_\R e^{-i\frac{\rho^2}{4t}} \big(\partial_\lambda \big( R_0((\lambda+i0)^2) - R_0((\lambda+i0)^2) V_1 L(\lambda) V_2 R_0((\lambda+i0)^2) - \\
&-(1-\chi_0(\lambda))R_0((\lambda+i0)^2) V_1 \widehat T(\lambda) V_2 R_0((\lambda+i0)^2)\big)\big)^\vee(\rho) \dd \rho.
\end{aligned}$$
The fact that $\|Z_1(t) u\|_{L^1} \les |t|^{-3/2} \|u\|_{L^\infty}$ follows by knowing that
$$\begin{aligned}
\big(\partial_\lambda \big( R_0((\lambda+i0)^2) - R_0((\lambda+i0)^2) V_1 L(\lambda) V_2 R_0((\lambda+i0)^2) - \\
-(1-\chi_0(\lambda))R_0((\lambda+i0)^2) V_1 \widehat T(\lambda) V_2 R_0((\lambda+i0)^2)\big)\big)^\vee \in \V_{L^1, L^\infty}.
\end{aligned}$$
By smoothing estimates it also follows that $Z_1(t)$ is $L^2$-bounded, see the proof of Proposition \ref{prop28}. By interpolation we also obtain the estimate $\|Z_1(t) u\|_{L^{3, \infty}} \les \|u\|_{L^{3/2, 1}}$.

Let $Z_2(t)$ be the contribution of the term $a \lambda^{-1} \chi_0(\lambda) V_2 \phi \otimes V_1\phi$:
$$
Z_2(t):= \frac a {i\pi} \int_\R e^{-it\lambda^2} \chi_0(\lambda) R_0((\lambda+i0)^2) V\phi \otimes V\phi R_0((\lambda+i0)^2) \dd \lambda.
$$
By Lemma \ref{lemma4.10}
$$
\|(Z_2(t)-R(t))u\|_{L^\infty} \leq t^{-3/2} \|u\|_{L^1},\ \|Z_2(t)u\|_{L^{3, \infty}} \les t^{-1/2} \|u\|_{L^{3/2, 1}}.
$$

We are left with the terms $\lambda^{-2} R_0((\lambda+i0)^2) V P_0 V R_0((\lambda+i0)^2)$ and $i\lambda^{-1} R_0((\lambda+i0)^2) V P_0 V \frac{|x-y|^2}{24\pi} V P_0 V R_0((\lambda+i0)^2)$. 
Let their contributions be
$$\begin{aligned}
X_2(t)&:=\frac {-1}{\pi} \int_{\R} e^{-it\lambda^2} R_0((\lambda+i0)^2) V P_0 V \frac{|x-y|^2}{24\pi} V P_0 V R_0((\lambda+i0)^2) \dd \lambda,\\
X_3(t)&:=\frac{-1}{i\pi}\lim_{\delta \to 0} \int_{|\lambda|>\delta} e^{-it\lambda^2} R_0((\lambda+i0)^2) V P_0 V R_0((\lambda+i0)^2) \lambda^{-1} \dd \lambda.
\end{aligned}$$
By Lemma 4.12 of \cite{yajima_disp},
\be\lb{rez_1}\begin{aligned}
\|X_2(t) u\|_{L^{3, \infty}} &\les t^{-1/2} \|u\|_{L^{3/2, 1}},\\
\Big\|X_2(t) u + i \frac {e^{-i \frac {3\pi}4}}{\sqrt {\pi t}} P_0 V \frac {|x-y|^2}{24\pi} V P_0\Big\|_{L^\infty} &\les t^{-3/2} \|u\|_{L^1}.
\end{aligned}\ee
This lemma has a proof similar to Lemma \ref{lemma4.10}. It requires, in addition, that $|\phi_j(x)| \les |x|^{-2}$ for every eigenfunction $\phi_j \in \mc E$, $2 \leq j \leq N$, which is guaranteed by Lemma \ref{lemma_12}.

By Lemma 4.14 of \cite{yajima_disp},
\be\lb{rez_2}\begin{aligned}
\|X_3(t) u\|_{L^{3, \infty}} &\les t^{-1/2} \|u\|_{L^{3/2, 1}},\\
\Big\|X_3(t) u - \frac {e^{-i \frac {3\pi}4}}{\sqrt {\pi t}} \Big(\mu_t(x) \frac{|x-y|}{8\pi} V P_0 + P_0 V \frac{|x-y|}{8\pi} \mu_t(y) \Big)\Big\|_{L^\infty} &\les t^{-3/2} \|u\|_{L^1}.
\end{aligned}\ee
The proof of Lemma 4.14 in \cite{yajima_disp} depends on $\langle y \rangle^3 V(y) \phi(y)$ being integrable, which is also true here since $|\phi(y)| \les \langle y \rangle^{-1}$ and $\langle y \rangle^2 V(y) \in \langle y \rangle^{-2} L^{3/2, 1} \subset L^1$.

Combining the two results (\ref{rez_1}) and (\ref{rez_2}) and knowing that $\|S(t)u\|_{L^{3, \infty}} \les t^{-1/2} \|u\|_{L^{3/2, 1}}$ by (\ref{s_bound}), we obtain that
\be\lb{s}
\|(X_2(t)+X_3(t)-S(t))u\|_{L^\infty} \les t^{-3/2} \|u\|_{L^1},\ \|(X_2(t)+X_3(t)-S(t))u\|_{L^{3, \infty}} \les t^{-1/2} \|u\|_{L^{3/2, 1}}.
\ee

Recall that $e^{-itH}P_c = Z_1(t)+Z_2(t)+X_2(t)+X_3(t) = Z(t)+R(t)+S(t)$. We obtain for $Z(t) = Z_1(t) + (Z_2(t) - R(t)) + (X_2(t) + X_3(t) - S(t))$ that
$$
\|Z(t)u\|_{L^\infty} \les t^{-3/2} \|u\|_{L^1},\ \|Z(t)u\|_{L^{3, \infty}} \les t^{-1/2} \|u\|_{L^{3/2, 1}}.
$$
Conclusion (\ref{concluzie}) follows by interpolation.

Finally, assume that all the eigenfunctions $\phi_k \in L^1$ for $2 \leq k \leq N$ (recall that $\phi_1$ is the resonance). Then, by Lemma \ref{decay_lemma}, it follows that $\langle V \phi_k, y_\ell \rangle = \langle V \phi_k, y_\ell y_m = 0$ for all $\ell$ and $m$ and all $2 \leq k \leq N$. As a consequence, we immediately see that
$$
P_0 V |x-y|^2 V P_0 = P_0 V (|x|^2 + |y|^2) P_0 - 2 \sum_{k=1}^3 P_0 V x_k y_k V P_0 = 0.
$$
Since $\langle \phi_k, V \rangle = 0$ and $\langle V \phi_k, y_\ell \rangle = 0$, we can also rewrite
$$
\mu_t(x) |x-y| V P_0 = \mu_t(x) (|x-y|-|x|+\frac {xy}{|x|}) V P_0.
$$
Then note that $|x| (|x-y|-|x|+\frac {xy}{|x|}) V P_0$ is bounded in absolute value by $\sum_{k=2}^N \int_{\R^3} |y|^2 |V(y)| |\phi_k(y)| \dd y \otimes |\phi_k(z)|$, which is bounded from $L^1$ to $L^\infty$ since $\phi_k \in \langle x \rangle^{-2} L^\infty$ and $V \in \langle x \rangle^{-1} L^{3/2, 1}$. Having gained a power of decay in $x$, we use it by $|\mu_t(x) |x|^{-1}| \les t^{-1}$. Therefore
$$
\|t^{-1/2} \mu_t(x) |x-y| V P_0 u\|_{L^\infty} \les t^{-3/2} \|u\|_{L^1}.
$$

Consequently, when $\phi_k \in L^1$ for $2 \leq k \leq N$, $S(t)$ can be removed from (\ref{s}). Hence we retrieve conclusion (\ref{concluzie}) without $S$, as claimed.
\end{proof}

\begin{proposition}\lb{prop219} Assume that $V \in \langle x \rangle^{-2} L^{3/2, 1}$ and that $H=-\Delta+V$ is an exceptional Hamiltonian of the third kind. Then
$$
\|e^{-itH}P_c u\|_{L^{3, \infty}} \les t^{-1/2} \|u\|_{L^{3/2, 1}}
$$
and for $3/2<p\leq 2$
$$
\|e^{-itH} P_c u\|_{L^{p'}} \les t^{-\frac 3 2(\frac 1 p - \frac 1 {p'})} \|u\|_{L^p}.
$$
Here $\frac 1 p + \frac 1 {p'} = 1$.
\end{proposition}
The proof of this proposition parallels the proof of Proposition \ref{prop29}.
\begin{proof}[Proof of Proposition \ref{prop219}]
Write the evolution as
$$
e^{-itH}P_c f = \frac 1 {i\pi} \int_\R e^{-it\lambda^2} \big(R_0((\lambda+i0)^2) - R_0((\lambda+i0)^2) V_1 \widehat T(\lambda)^{-1}  V_2 R_0((\lambda+i0)^2)\big) f \lambda \dd \lambda.
$$
We consider a partition of unity subordinated to the neighborhoods of Lemmas \ref{lemma23} and \ref{lemma217}. First, take a sufficiently large $R$ such that $(1-\chi(\lambda/R)) (I+\widehat T(\lambda))^{-1} \in \W$. Then, for every $\lambda_0 \in [-4R, 4R]$ there exists $\epsilon(\lambda_0)>0$ such that $\chi(\frac{\lambda-\lambda_0}{\epsilon(\lambda_0)}(I+\widehat T(\lambda))^{-1} \in \W$ if $\lambda_0\ne 0$ or the conclusion of Lemma \ref{lemma27} holds when $\lambda_0=0$.

Since $[-4R, 4R]$ is a compact set, there exists a finite covering $[-4R, 4R] \subset \bigcup_{k=1}^N (\lambda_k - \epsilon(\lambda_k), \lambda_k + \epsilon(\lambda_k))$. Then we construct a finite partition of unity on $\R$ by smooth functions $1= \chi_0(\lambda) + \sum_{k=1}^N \chi_k(\lambda) + \chi_\infty(\lambda)$, where $\supp \chi_\infty \subset \R \setminus (-2R, 2R)$, $\supp \chi_0 \subset [-\epsilon(0), \epsilon(0)]$, and $\supp \chi_k \subset [\lambda_k-\epsilon(\lambda_k), \lambda_k+\epsilon(\lambda_k)]$.

By Lemma \ref{lemma23}, for any $k \ne 0$, $\chi_k(\lambda) (I+\widehat T (\lambda))^{-1} \in \W$, so $(1-\chi_0(\lambda)) (I+\widehat T(\lambda))^{-1} \in \W$. By Lemma \ref{lemma217}
$$
\chi_0(\lambda) (I+\widehat T(\lambda))^{-1} = L(\lambda) + \lambda^{-1} S(\lambda) + \lambda^{-2} V_2 P_0 V_1,
$$
where $L \in \W$ and $S^\vee \in \V_{L^{3, 2}, L^{3/2, 2}}$.

Let $Z_1$ be given by the sum of all the regular terms of the decomposition:
$$\begin{aligned}
Z_1(t) &:= \frac 1 {i\pi} \int_\R e^{-it\lambda^2} \big(R_0((\lambda+i0)^2) - R_0((\lambda+i0)^2) V_1 L(\lambda) V_2 R_0((\lambda+i0)^2) - \\
&- (1-\chi_0(\lambda)) R_0((\lambda+i0)^2) V_1 \widehat T(\lambda) V_2 R_0((\lambda+i0)^2) \big) \lambda \dd \lambda \\
&= \frac 1 {2\pi t} \int_\R e^{-it\lambda^2} \partial_\lambda \big(R_0((\lambda+i0)^2) - R_0((\lambda+i0)^2) V_1 L(\lambda) V_2 R_0((\lambda+i0)^2) - \\
&- (1-\chi_0(\lambda)) R_0((\lambda+i0)^2) V_1 \widehat T(\lambda) V_2 R_0((\lambda+i0)^2) \big) \dd \lambda\\
&= \frac C {t^{3/2}} \int_\R e^{-i\frac{\rho^2}{4t}} \big(\partial_\lambda \big(R_0((\lambda+i0)^2) - R_0((\lambda+i0)^2) V_1 L(\lambda) V_2 R_0((\lambda+i0)^2) - \\
&- (1-\chi_0(\lambda)) R_0((\lambda+i0)^2) V_1 \widehat T(\lambda) V_2 R_0((\lambda+i0)^2) \big)\big)^\vee(\rho) \dd \rho.
\end{aligned}$$
The fact that $\|Z_1(t) u\|_{L^\infty} \les |t|^{-3/2} \|u\|_{L^1}$ follows by knowing that
$$\begin{aligned}
\big(\partial_\lambda \big(&R_0((\lambda+i0)^2) - R_0((\lambda+i0)^2) V_1 L(\lambda) V_2 R_0((\lambda+i0)^2) - \\
&- (1-\chi_0(\lambda)) R_0((\lambda+i0)^2) V_1 \widehat T(\lambda) V_2 R_0((\lambda+i0)^2)\big)\big)^\vee \in \V_{L^1, L^{\infty}}.
\end{aligned}$$

By smoothing estimates it immediately follows that $Z_1(t)$ is $L^2$-bounded, see the proof of Proposition \ref{prop28}. Interpolating, we obtain that $\|Z_1(t)u\|_{L^{3, \infty}} \les t^{-1/2} \|u\|_{L^{3, 1}}$.

Let $Z_2$ be the following singular term in the decomposition of Lemma~\ref{lemma217}:
$$\begin{aligned}
Z_2(t) &:= \frac{1}{i\pi} \int_{\R} e^{-it\lambda^2} R_0((\lambda+i0)^2) V_1 S(\lambda) V_2 R_0((\lambda+i0)^2) \dd \lambda \\
&= \frac C {t^{1/2}} \int_\R e^{-i\frac {\rho^2}{4t}} \big(R_0((\lambda+i0)^2) V_1 S(\lambda) V_2 R_0((\lambda+i0)^2)\big)^\vee(\rho) \dd \rho.
\end{aligned}$$
Note that $(R_0((\lambda+i0)^2) V_1)^\vee \in \V_{L^{3/2, 2}, L^{3, \infty}}$, $S(\lambda)^\vee \in \V_{L^{3, 2}, L^{3/2, 2}}$, and $(V_2 R_0((\lambda+i0)^2))^\vee \in \V_{L^{3/2, 1}, L^{3, 2}}$. Thus
$$
R_0((\lambda+i0)^2) V_1 (\lambda S(\lambda)) V_2 R_0((\lambda+i0)^2) \in \V_{L^{3/2, 1}, L^{3, \infty}}.
$$
This immediately implies that $\|Z_2(t) u\|_{L^{3, \infty}} \les t^{-1/2} \|u\|_{L^{3/2, 1}}$.

We are left with the contribution of the term $\lambda^{-2} V_2 P_0 V_1$. This is the same as the term $X_3$ from the proof of Proposition \ref{prop_3}. By (\ref{rez_2}) $\|X_3(t) u\|_{L^{3, \infty}} \les t^{-1/2} \|u\|_{L^{3/2, 1}}$.

Putting the three estimates for $Z_1$, $Z_2$, and $X_3$ together, we obtain that $\|e^{-itH}P_c u\|_{L^{3, \infty}} \les \|u\|_{L^{3/2, 1}}$. Interpolating with the obvious $L^2$ bound $\|e^{-itH} P_c u\|_{L^2} \les \|u\|_{L^2}$, we obtain the stated conclusion.
\end{proof}

\section*{Acknowledgments} This paper is indebted to Professor Kenji Yajima's article \cite{yajima_disp}, from which it borrows several lemmas, as well as the general plan of the proof. I would like to thank Professor Avy Soffer for referring me to that article and Professor Wilhelm Schlag for introducing me to this problem. I would also like to thank the anonymous referee for the many useful comments.

The author was partially supported by the NSF grant DMS--1128155 and by an AMS--Simons Foundation travel grant.

\end{document}